\newtheorem{theorem}{Theorem}
\newtheorem{lemma}[theorem]{Lemma}
\newtheorem{corollary}[theorem]{Corollary}
\theoremstyle{definition}
\newtheorem{remark}[theorem]{Remark}
\numberwithin{theorem}{section}
\newcommand{\TheTitle}{Divergence-conforming discontinuous Galerkin finite elements for Stokes eigenvalue problems} 
\newcommand{\TheShortTitle}{$H^{\textrm{div}}$-DGFEM for Stokes eigenvalue problems} 
\newcommand{\TheAuthors}{Joscha Gedicke and Arbaz Khan}
\date{}
\newcommand{\vertiii}[1]{{\left\vert\kern-0.25ex\left\vert\kern-0.25ex\left\vert #1 
    \right\vert\kern-0.25ex\right\vert\kern-0.25ex\right\vert}}
\title{{\TheTitle}%
\thanks{
The first author has been funded by the Austrian Science Fund (FWF) 
through the project P 29197-N32.
The second author has been funded
by the Mathematics Center Heidelberg (Match) at University of Heidelberg, Germany and EPSRC grant EP/P013317.}}
\author{
Joscha Gedicke\thanks{Faculty of Mathematics, University of Vienna, Oskar-Morgenstern-Platz 1, 1090 Vienna, Austria
(\mbox{joscha.gedicke@univie.ac.at}).}
\and
Arbaz Khan\thanks{Corresponding author: 
School of Mathematics,  The University of Manchester, 
M13 9PL Manchester, UK (\mbox{arbaz.khan@manchester.ac.uk}).}
}
\begin{document}

\maketitle

\begin{abstract}
In this paper, we present a divergence-conforming discontinuous Galerkin finite element method 
for Stokes eigenvalue problems.
We prove a priori error estimates for the eigenvalue and eigenfunction errors and
present a robust residual based a posteriori error estimator. The a posteriori error estimator is proven to be reliable and
(locally) efficient in a mesh-dependent velocity-pressure norm. 
We finally present some numerical examples that verify the a priori convergence rates
and the reliability and efficiency of the residual based a posteriori error estimator.
\end{abstract}

{\small\noindent\textbf{Keywords}
a priori analysis, a posteriori analysis, H-div conforming DG finite element, Stokes problem, eigenvalue problem

\noindent
\textbf{AMS subject classification}
65N15,   	
65N25,   	
65N30   	
}

\section{Introduction}\label{intro}
In fluid mechanics, eigenvalue problems are of great importance because of their role for the stability analysis of fluid flow problems.
Hence, the development of numerical methods for the Stokes problem, as a model for incompressible fluid flow, is of great interest.
For example in \cite{PHYHXF}, 
several stabilized finite element methods for the Stokes eigenvalue problem are considered by Huang et al. 
A finite element analysis of a pseudo stress formulation for the Stokes eigenvalue problem is proposed by  Meddahi et al. \cite{MSDMRR}. 
\par
Currently, there are only very few results on the a posteriori error analysis for the Stokes eigenvalue problem available in the literature. 
An a posteriori error analysis based on residual a posteriori error estimators  for the finite element discretization of
the Stokes eigenvalue problem is proposed by Lovadina et al. \cite{LCMLRS}. 
Some superconvergence results and the related recovery type a posteriori error estimators for the Stokes eigenvalue problem is presented by Liu et al. \cite{LHGWWSYN}  based on a projection method. 
In \cite{AMGVM}, Armentano et al.  introduced a posteriori error estimators for stabilized low-order mixed finite elements and in \cite{HJZZYY}, Han et al. presented a residual type a posterior error estimator for a new adaptive mixed finite element method for the Stokes eigenvalue problem. 
In \cite{PH}, Huang presents a posteriori lower and upper eigenvalue bounds for the Stokes eigenvalue problem for two stabilized finite element methods based on the lowest equal-order finite element pair. Recently, we have developed an a posteriori error analysis for the Arnold-Winther mixed finite element method of the Stokes eigenvalue problem in \cite{JA} using the stress-velocity formulation. 
\par
Cockburn et al. \cite{CBKGSD,BCGKDS} derived the divergence-conforming discontinuous Galerkin finite element method. In \cite{HSW}, Houston et al. presented an a posteriori error estimation for mixed discontinuous Galerkin approximations of the Stokes problem. 
Kanschat et al. \cite{GKDS} proposed a posteriori error estimates for divergence-€free discontinuous Galerkin approximations of the Navier-€"Stokes equations.
 Multigrid methods for $H^{\textrm{div}}$-conforming discontinuous Galerkin ($H^{\textrm{div}}$-DG) finite element methods for the Stokes equations are proposed by Kanschat et al. \cite{KGYM}. Recently, Kanschat et al. \cite{GKNS} presented the relation between the $H^{\textrm{div}}$-DG finite element method for the Stokes equation and the $C^0$ interior penalty finite element method for the biharmonic problem. 
\par
In this paper, we introduce an $H^{\textrm{div}}$-DG finite element method  for Stokes eigenvalue problems. 
We derive a priori error estimates for the eigenvalue and eigenfunction errors.
We present a robust a posteriori error analysis of the $H^{\textrm{div}}$-DG finite element method 
and derive upper and local lower bounds for the velocity-pressure error which is measured in terms of the mesh-dependent DG  norm.  The proposed a posteriori error estimator is robust in the sense that the ratio of upper and lower bounds is independent of the viscosity coefficient and the local mesh size.
\par
For simplicity of the presentation we restrict the analysis to the case of a simple eigenvalue
$\lambda$. The results can be applied to multiple eigenvalues by extending the
given analysis to subspaces of eigenvectors that belong to the same multiple eigenvalue.
The a posteriori error estimator can be extended to multiple eigenvalues
in that the squared sum over all estimators of discrete eigenfunctions approximating the same multiple eigenvalue provides
an upper bound of the eigenvalue error up to higher order terms.
\par
The paper is organised as follows: the necessary notation and the $H^{\textrm{div}}$-DG formulation of the Stokes eigenvalue problem is presented in Section \ref{formu}. 
In Section \ref{sec:2}, the a priori error analysis is discussed.  The a posteriori error analysis  is developed in Section \ref{aposterrestana}. 
Finally, Section \ref{comre} is devoted to present some numerical results for uniform and adaptive mesh refinement.
\par

\section{Preliminaries} \label{formu}
\subsection{Notation}
Define $\bm{v}=(v_1,v_2)^t\in\mathbb{R}^2$ and $\bm{\tau}=(\tau_{ij})_{2\times 2}$,  then
\begin{align*}
\nabla\bm{v}&:=\Bigg(\begin{array}{cc}
\frac{\partial v_1}{\partial x}&\frac{\partial v_1}{\partial y}\\
\frac{\partial v_2}{\partial x}&\frac{\partial v_2}{\partial y}
\end{array}\Bigg),\qquad\text{and}\qquad {\textrm{div}}\bm{\tau}:=\Bigg(\begin{array}{c}
\frac{\partial \tau_{11}}{\partial x}+\frac{\partial \tau_{12}}{\partial y}\\
\frac{\partial \tau_{21}}{\partial x}+\frac{\partial \tau_{22}}{\partial y}
\end{array}\Bigg).
\end{align*}
Let $H^s(\omega)$ be the standard Sobolev space  with the associated norm $\|\cdot\|_{s,\omega}$ for $s\ge0$. In case of $\omega=\Omega$, we use $\|\cdot\|_{s}$ instead of $\|\cdot\|_{s,\Omega}$. Let $H^{-s}(\omega) :=(H^{s}(\omega))^*$ be the dual space of $H^{s}(\omega)$. Now we extend the definitions to vector and matrix-valued functions. Let $\bm{H}^{s}(\omega)=\bm{H}^{s}(\omega;\mathbb{R}^2)$ and  $\bm{H}^{s}(\omega,\mathbb{R}^{2\times 2})$ be the Sobolev spaces over the set of $2$-dimensional vector and $2\times 2$ matrix-valued function, respectively.
The symbols $\lesssim$ and $\gtrsim$ are used to denote bounds which are valid up to positive constants independent of the local mesh size.
\par
Throughout the paper, we consider the following spaces $L^2_0(\Omega)$, $\bm{H}^{\textrm{div}}(\Omega)$, $\bm{H}^{\textrm{div}}_0(\Omega)$ 
and $\bm{H}^{\textrm{div}}(\Omega,\mathbb{R}^{2\times 2})$ which are defined as follows:
\begin{align*}
L^2_0(\Omega)&:=\{v\in {L}^2(\Omega) \;|\; \int_{\Omega}v \,dx=0\},\\
\bm{H}^{\textrm{div}}(\Omega)&:=\{\bm{v}\in \bm{L}^2(\Omega) \;|\; \nabla\cdot \bm{v}\in {L}^2(\Omega)\},\\
\bm{H}^{\textrm{div}}_0(\Omega)&:=\{\bm{v}\in \bm{H}^{\textrm{div}}(\Omega) \;|\; \bm{v}\cdot\bm{n}=0 \;\;\mbox{on}\;\;\partial\Omega\},\\
\bm{H}^{\textrm{div}}(\Omega,\mathbb{R}^{2\times 2})&:=\{\bm{\tau}\in \bm{L}^2(\Omega;\mathbb{R}^{2\times2}) \;|\; {\textrm{div}}\, \bm{\tau}\in \bm{L}^2(\Omega)\}.
\end{align*} 

\subsection{Weak formulation of the Stokes eigenvalue problem}
Let $\bm{u}$ be the velocity, $p$ the pressure, $\nu>0$ the (constant) viscosity, and $\Omega\subset \mathbb{R}^2$ be a bounded, and connected Lipschitz domain. 
Consider the velocity-pressure formulation of the Stokes eigenvalue problem:
find an eigentripel $(\bm{u},p,\lambda)$, $\bm{u}\neq\bm{0}$, such that
 \begin{align}\label{pre12}
 \begin{split}
 -\nu \bigtriangleup \bm{u}+\nabla p &=\lambda \bm{u}\quad \mbox{in}\;\Omega,  \\
\nabla\cdot \bm{u}&=0\quad \;\;\;\mbox{in}\;\Omega,\\
 \bm{u}&=\bm{0}\quad \;\;\;\mbox{on}\;\partial\Omega,
 \end{split}
\end{align}  
with the compatibility relation
\begin{align*}
\int_{\Omega} p\;dx=0.
\end{align*}
The weak formulation of the Stokes eigenvalue problem \eqref{pre12} reads: 
find $(\bm{u},p,\lambda) \in \bm{H}^1_0(\Omega)\times L^2_0(\Omega) \times \mathbb{R}_+$ such that $\|\bm{u}\|_0=1$ and
\begin{align}\label{variational}
\begin{split}
\nu(\nabla \bm{u},\nabla \bm{v}) - (p,\nabla\cdot\bm{v}) &= \lambda(\bm{u},\bm{v})\quad\forall\bm{v}\in \bm{H}^1_0(\Omega),\\
(q,\nabla\cdot\bm{u})  &= 0\qquad\quad\;\;\forall q\in L^2_0(\Omega).
\end{split}
\end{align}
We can formulate the weak formulation of \eqref{variational}  in a global form as: find $(\bm{u},p,\lambda)\in \bm{H}_0^1(\Omega)\times L^2_0(\Omega)\times\mathbb{R}_+$ 
such that $\|\bm{u}\|_0=1$ and
\begin{align}\label{stpre11}
\mathcal{A}(\bm{u},p;\bm{v},q)=\lambda(\bm{u},\bm{v})\quad\forall (\bm{v},q)\in \bm{H}_0^1(\Omega)\times L^2_0(\Omega), 
\end{align}
where 
\begin{align*}
\mathcal{A}(\bm{u},p;\bm{v},q)=\nu(\nabla\bm{u},\nabla\bm{v})-(p,\nabla\cdot\bm{v})-(q, \nabla\cdot\bm{u}).
\end{align*}

\subsection{Meshes, trace operators and discrete spaces}\label{mifem}
We suppose that the domain $\Omega$ is decomposed by a subdivision $\mathcal{T}_h$ into a mesh of shape-regular rectangular cells $K$. 
Let $\mathcal{E}_h$ denote the set of edges, $\mathcal{E}^i_h$ the set of interior edges, and $\mathcal{E}^\partial_h$ the set of boundary edges of $\mathcal{T}_h$.  
We restrict ourself to one-irregular meshes  $\mathcal{T}_h$ in which each interior edge $E\in\mathcal{E}^i_h$ 
may contain at most one hanging node in the midpoint of $E$.
\par
For a given mesh $\mathcal{T}_h$, the notions of broken spaces for the continuous and differentiable function spaces are denoted as $C^0(\mathcal{T}_h)$ and $H^s(\mathcal{T}_h)$ which are the spaces such that the restriction to each mesh cell $K\in\mathcal{T}_h$ is in $C^0(K)$ and $H^s(K)$, respectively. 
\par
Let $K_\pm\in\mathcal{T}_h$ be two mesh cells which share a common edge $E=K_+\cap K_-\in\mathcal{E}^i_h$. 
The traces of functions $v\in C^0(\mathcal{T}_h)$ on $E$ from $K_\pm$ are defined as $v_\pm$, respectively. Then the sum operator is defined as
\begin{align*}
[\![v]\!]=v_+ + v_-.
\end{align*}
Let $\bm{n}_\pm$ be the unit outward normal vector to $K_\pm$, respectively. Then the sum operator turns into the jump operator, such that for $\bm{v}\in C^1(\mathcal{T}_h;\mathbb{R}^2)$
\begin{align*}
[\![\partial\bm{v}/\partial\bm{n}]\!]=\nabla(\bm{v}_+ - \bm{v}_-)\bm{n}_+, 
\quad\text{and}\quad[\![\bm{v}\otimes\bm{n}]\!]=(\bm{v}_+ - \bm{v}_-)\otimes\bm{n}_+.
\end{align*}
For boundary edges $E=K_+\cap\partial\Omega$ we set $[\![\bm{v}]\!]=\bm{v}_+$ and
with $\nabla_h$ we denote the local application of the gradient
$(\nabla_h\bm{v})|_K = \nabla(\bm{v}|_K)$ on each $K\in\mathcal{T}_h$.
\par
We define ${Q}_k(K),{Q}_k(K)^d$ and ${Q}_k(K)^{d\times d}$ as the space of scalar, vector and tensor valued polynomials on $K$ of partial degree at most integer $k\geq 1$.
\par
Choose $\bm{V}_h$ as a discrete subspace of $\bm{H}^{\textrm{div}}_0(\Omega)$ as  
\begin{align}\label{subsapce1}
\bm{V}_h=\{\bm{v}\in \bm{H}^{\textrm{div}}_0(\Omega)\;|\;\forall K\in \mathcal{T}_h : \bm{v}|_K \in RT_k(K) \mbox{ for } k\ge 1\},
\end{align}
where $RT_k(K) :=  \mathcal{P}_{k+1,k}(K)\times \mathcal{P}_{k,k+1}(K)$ is  the Raviart-Thomas space of degree $k\geq 1$, where
$\mathcal{P}_{r,s}(K)$  denotes the space of 
the polynomial functions on $K$ of degree at most $r>0$ in $x_1$ and at most $s>0$ in $x_2$.
Moreover, let $Q_h$ be the discrete space of $L^2_0(\Omega)$ such that
\begin{align}\label{subsapce2}
Q_h=\{v\in L^{2}_0(\Omega)\;|\;\forall K\in \mathcal{T}_h : v|_K \in {Q}_k(K) \mbox{ for } k\ge 1\}.
\end{align}
An important property of the pair $\bm{V}_h\times Q_h$ is as follows: on the meshes considered, 
\begin{align*}
\nabla\cdot\bm{V}_h\subset Q_h,
\end{align*}
see \cite{CBKGSD} for more details.
As a consequence we have that the discrete velocity field $\bm{u}_h$ is exactly divergence free.
\begin{remark}
The inf-sup stability of discretizations with hanging nodes using Raviart-Thomas finite elements is in part still an
open question. In \cite{DSCSAT}, there exists a stability proof only for the
pair $RT_k \times Q_k$ defined in \eqref{subsapce1} and \eqref{subsapce2} with $k\ge2$  for quadrilaterals with one-irregular meshes. However, we conjecture from our
computational results that stability also holds for $k = 1$. Moreover, the stability result for the divergence-free elements proposed in \cite{BCGKDS} is not available for triangles with hanging nodes. On the other hand, locally
refined triangular meshes without hanging nodes can be obtained using bisection. The results below
are all to be read in view of the restrictions cited in this remark.
\end{remark}
\begin{remark}
The analysis of this paper also applies directly to divergence-free $BDM_k/P_{k-1}$ finite elements on regular triangular meshes.
\end{remark}

\subsection{$H^{\textrm{div}}$-DG formulation for the Stokes eigenvalue problem}
The discrete weak formulation of problem \eqref{pre12} reads: find $(\bm{u}_h,p_h,\lambda_h)\in \bm{V}_h\times Q_h\times\mathbb{R}_+$ such that 
$\|\bm{u}_h\|_0=1$ and
\begin{align}\label{hdiv11}
\mathcal{A}_h(\bm{u}_h,p_h;\bm{v}_h,q_h)=\lambda_h(\bm{u}_h,\bm{v}_h) \quad\forall (\bm{v}_h,q_h)\in \bm{V}_h\times Q_h,
\end{align}
where 
\begin{align*}
\mathcal{A}_h(\bm{u}_h,p_h;\bm{v}_h,q_h)=a_h(\bm{u}_h,\bm{v}_h)-(p_h,\nabla\cdot\bm{v}_h)-(q_h, \nabla\cdot\bm{u}_h).
\end{align*}
Here, $a_h(\cdot,\cdot)$ is the bilinear form
defined as
\begin{align}
a_{h}(\bm{u},\bm{v})&=\nu(\nabla_h \bm{u}, \nabla_h \bm{v})+a^{i}_{h}(\bm{u},\bm{v})+a^{\partial}_{h}(\bm{u},\bm{v}),\\
a^{i}_{h}(\bm{u},\bm{v})&=a^{i}_p(\bm{u},\bm{v})-a^{i}_c(\bm{u},\bm{v})-a^{i}_c(\bm{v},\bm{u}),\\
a^{\partial}_{h}(\bm{u},\bm{v})&=a^{\partial}_p(\bm{u},\bm{v})-a^{\partial}_c(\bm{u},\bm{v})-a^{\partial}_c(\bm{v},\bm{u}),
\end{align}
where the interior face terms $a^{i}_p(\bm{u},\bm{v})$, $a^{i}_c(\bm{u},\bm{v})$ and Nitsche terms 
are defined as
\begin{align*}
a^{i}_c(\bm{u},\bm{v})&=\frac{\nu}{2}\sum_{E\in\mathcal{E}^i_h}\int_E [\![\nabla \bm{u}]\!]\!:\![\![\bm{v}\otimes \bm{n}]\!]d\bm{s}, 
\quad a^{i}_p(\bm{u},\bm{v})=\nu\sum_{E\in\mathcal{E}^i_h}\int_E\gamma_h [\![\bm{u}\otimes \bm{n}]\!]\!:\![\![\bm{v}\otimes \bm{n}]\!]d\bm{s},\\
a^{\partial}_c(\bm{u},\bm{v})&=\nu\sum_{E\in\mathcal{E}^{\partial}_h}\int_E \nabla \bm{u}\!:\!(\bm{v}\otimes \bm{n})d\bm{s}, 
\quad a^{\partial}_p(\bm{u},\bm{v})=2\nu\sum_{E\in\mathcal{E}^{\partial}_h}\int_E\gamma_h (\bm{u}\otimes \bm{n})\!:\!(\bm{v}\otimes \bm{n})d\bm{s},
\end{align*}
for $\gamma_h=\frac{\gamma}{h_E}$, and $\bm{u},\bm{v}\in \bm{V}_h$. 
Here, $h_E$ is the length of the edge $E$ and $\gamma$ is the penalty parameter which is chosen sufficiently large to guarantee the stability of the DG formulation, see for instance \cite{ADN}.
\par
Finally, we introduce the following mesh-dependent DG velocity-pressure norm
\begin{align}\label{desnorm}
\vertiii{(\bm{u},p)}^2=\vertiii{\bm{u}}^2+\nu^{-1}\|p\|_{0}^2,
\end{align}
where
\begin{align*}
\vertiii{\bm{u}}^2&=\nu\|\nabla_h\bm{u}\|^2_{0}+a^{i}_p(\bm{u},\bm{u})+a^{\partial}_p(\bm{u},\bm{u}).
\end{align*}

\section{A priori error analysis}\label{sec:2}
Our main aim is to show that the approximated eigenvalues and eigenfunctions of the $H^{\textrm{div}}$-DG  finite element formulation of the Stokes eigenvalue problem
converge to the solution of the corresponding spectral problem which comes to apply the classical spectral approximation theory presented in \cite{IBJO,Mercier}
using results of the a priori error analysis of the associated source problem that we recall here for completeness.

\subsection{Numerical analysis of the source problem}\label{numsp}
This section is devoted to discuss the source problem and to recall its essential stability and convergent results.
\par
Consider the source problem with the right hand side $\bm{f}\in \bm{L}^2(\Omega)$
\begin{align*}
-\nu \bigtriangleup \bm{u}^{\bm{f}}+\nabla p^{\bm{f}} &=\bm{f}\quad \mbox{in}\;\Omega,  \\
\nabla\cdot \bm{u}^{\bm{f}}&=0\quad \;\;\;\mbox{in}\;\Omega,\nonumber\\
 \bm{u}^{\bm{f}}&=0\quad \;\;\;\mbox{on}\;\partial\Omega,\nonumber
\end{align*}
with compatibility condition
\begin{align*}
\int_{\Omega} p\,dx=0.
\end{align*}
The variational formulation of the Stokes source problem reads: find $(\bm{u}^{\bm{f}},p^{\bm{f}})\in\bm{H}_0^1(\Omega)\times L^2_0(\Omega)$  such that
\begin{align}\label{sprob131}
\mathcal{A}(\bm{u}^{\bm{f}},p^{\bm{f}};\bm{v},q)=(\bm{f},\bm{v})\quad \forall (\bm{v},q)\in\bm{H}_0^1(\Omega)\times L^2_0(\Omega).
\end{align}
Due to the continuous inf-sup condition
\begin{align}\label{inf-sup}
\inf_{0\neq q\in L^2_0(\Omega)}\sup_{\bm{0}\neq\bm{v}\in\bm{H}^1_0(\Omega)}\frac{- (q,\nabla\cdot\bm{v})}{\|\nabla\bm{v}\|_{0}\|q\|_0}>0,
\end{align}
the variational formulation \eqref{sprob131} is well-posed \cite{BF1991,GR1986}.
\par
The  $H^{\textrm{div}}$-DG  finite elements of the Stokes source problem reads: find $(\bm{u}_h^{\bm{f}},p_h^{\bm{f}})\in\bm{V}_h\times Q_h$  such that
\begin{align}\label{sprob13}
\mathcal{A}_h(\bm{u}_h^{\bm{f}},p_h^{\bm{f}};\bm{v}_h,q_h)=(\bm{f},\bm{v}_h)\quad \forall (\bm{v}_h,q_h)\in\bm{V}_h\times Q_h.
\end{align}
\par
From \cite{BCGKDS,GKDS,GKNS}, we have that
the bilinear form $a_h(\cdot,\cdot)$ is bounded and elliptic uniformly in $h$ on $\bm{V}_h$ equipped with the norm $\vertiii{\cdot}$. 
Furthermore, the velocity-pressure pair $\bm{V}_h\times Q_h$ is inf-sup stable and satisfies
\begin{align*}
	\inf_{0\neq q_h\in Q_h}\sup_{\bm{0}\neq \bm{v}_h\in\bm{V}_h} \frac{-(q_h,\nabla\cdot\bm{v}_h)}{\vertiii{\bm{v}_h}\|q_h\|_0}\geq \beta > 0,
\end{align*}
for a constant $\beta$ independent of $h$.
Hence, the weak formulation \eqref{sprob13} has a 
unique discrete solution, which admits the following stability estimate
\begin{align*}
\vertiii{(\bm{u}_h^{\bm{f}},p_h^{\bm{f}})}&\lesssim \nu^{-1/2} \|\bm{f}\|_0,
\end{align*} 
and due to $\nabla\cdot\bm{V}_h\subset Q_h$ the discrete velocity $\bm{u}_h^{\bm{f}}$ is exactly divergence-free.
\par
From \cite{BCGKDS,GVGKBR,GKDS}, we have the following a priori estimates.
\begin{theorem}\label{errapriori1}
Let $\bm{u}\in\bm{H}^{s}(\Omega)$ and  $p\in{H}^{s-1}(\Omega)$ for some $s\in ]3/2,k+1]$ be solutions of 
the continuous problem \eqref{sprob131} that satisfy the following regularity condition 
\begin{align}\label{regularity}
 \nu^{1/2}\|\bm{u}^{\bm{f}}\|_{s}+\nu^{-1/2}\|p^{\bm{f}}\|_{s-1}\lesssim 
 \left\{
  \begin{array}{lr}
   \nu^{-1/2}\|\bm{f}\|_{0}, &  s\in(\frac{3}{2},2),\\
  \nu^{-1/2} \|\bm{f}\|_{s-2}, &  s\ge 2.
  \end{array}
\right.
 \end{align}
Then we have the following error bounds for the discrete approximations  $(\bm{u}_h,p_h)\in\bm{V}_h\times Q_h$
of the discrete problem \eqref{sprob13}
\begin{align}\label{errapriori}
\vertiii{(\bm{u}^{\bm{f}}-\bm{u}_h^{\bm{f}},p^{\bm{f}}-p_h^{\bm{f}})}&\lesssim h^{s-1}(\nu^{1/2}\|\bm{u}^{\bm{f}}\|_{s}+\nu^{-1/2}\|p^{\bm{f}}\|_{s-1}),\\
\|\bm{u}^{\bm{f}}-\bm{u}_h^{\bm{f}}\|_0&\lesssim h^{s-1+\alpha}\|\bm{u}^{\bm{f}}\|_{s},\label{errapriori2}
\end{align}
for $\alpha=\min\{s-1,1\}$. \qquad
\end{theorem}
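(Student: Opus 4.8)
\emph{Plan.} Both estimates follow the classical pattern for consistent, stable mixed DG discretizations: a C\'ea/Strang argument for the energy bound \eqref{errapriori}, followed by an Aubin--Nitsche duality argument for the $L^2$ bound \eqref{errapriori2}; this is essentially the route of \cite{BCGKDS,GVGKBR,GKDS}, specialised to the present notation.

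First I would record the consistency of the scheme. Since $\bm{u}^{\bm{f}}\in\bm{H}^s(\Omega)$ with $s>3/2$ and $\bm{u}^{\bm{f}}\in\bm{H}^1_0(\Omega)$, all jump quantities $[\![\bm{u}^{\bm{f}}\otimes\bm{n}]\!]$ and $[\![\nabla\bm{u}^{\bm{f}}]\!]$ on interior edges vanish, and the boundary Nitsche terms reduce to the boundary integrals that appear in the cell-wise integration by parts of the strong equations; hence $\mathcal{A}_h(\bm{u}^{\bm{f}},p^{\bm{f}};\bm{v}_h,q_h)=(\bm{f},\bm{v}_h)$ for all $(\bm{v}_h,q_h)\in\bm{V}_h\times Q_h$. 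Subtracting \eqref{sprob13} gives Galerkin orthogonality
\[
\mathcal{A}_h(\bm{u}^{\bm{f}}-\bm{u}_h^{\bm{f}},\,p^{\bm{f}}-p_h^{\bm{f}};\,\bm{v}_h,q_h)=0\qquad\forall(\bm{v}_h,q_h)\in\bm{V}_h\times Q_h .
\]
Combining this with the uniform boundedness and $\vertiii{\cdot}$-ellipticity of $a_h$ on $\bm{V}_h$ and the discrete inf--sup condition recalled above (Brezzi theory in the broken setting, using that the discrete kernel consists of exactly divergence-free fields because $\nabla\cdot\bm{V}_h\subset Q_h$) yields the quasi-best approximation estimate
\[
\vertiii{(\bm{u}^{\bm{f}}-\bm{u}_h^{\bm{f}},p^{\bm{f}}-p_h^{\bm{f}})}\lesssim \inf_{\bm{v}_h\in\bm{V}_h}\vertiii{\bm{u}^{\bm{f}}-\bm{v}_h}+\inf_{q_h\in Q_h}\nu^{-1/2}\|p^{\bm{f}}-q_h\|_0 .
\]
Taking $\bm{v}_h$ to be the $RT_k$-interpolant $\bm{\Pi}_h\bm{u}^{\bm{f}}$ and $q_h$ the $L^2(\Omega)$-projection $P_h p^{\bm{f}}$ onto $Q_h$, the right-hand side is bounded by the standard interpolation estimate for the broken gradient, by scaled trace inequalities for the $a_p^i$ and $a_p^\partial$ face contributions (here $[\![\bm{u}^{\bm{f}}\otimes\bm{n}]\!]=0$ is used), and by the projection error for the pressure; each term scales like $h^{s-1}$ times the corresponding $\bm{H}^s$ or $H^{s-1}$ seminorm, which is \eqref{errapriori}. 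In the range $s\in(3/2,2)$ the mesh-dependent norm \eqref{desnorm} must be read with $h$-weighted edge norms, which remain controlled by $\|\cdot\|_s$ through scaled trace inequalities.

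For \eqref{errapriori2}, set $\bm{e}:=\bm{u}^{\bm{f}}-\bm{u}_h^{\bm{f}}$ and let $(\bm{\phi},\xi)\in\bm{H}^1_0(\Omega)\times L^2_0(\Omega)$ solve the adjoint Stokes problem with right-hand side $\bm{e}$; the regularity hypothesis \eqref{regularity} applied to this problem gives $\bm{\phi}\in\bm{H}^{1+\alpha}(\Omega)$, $\xi\in H^{\alpha}(\Omega)$ with $\nu^{1/2}\|\bm{\phi}\|_{1+\alpha}+\nu^{-1/2}\|\xi\|_{\alpha}\lesssim\nu^{-1/2}\|\bm{e}\|_0$, $\alpha=\min\{s-1,1\}$. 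Since $\bm{\phi}$ is smooth, lies in $\bm{H}^1_0(\Omega)$ and is divergence-free, $\mathcal{A}_h(\bm{\phi},\xi;\cdot,\cdot)$ is consistent on $\bm{V}_h+(\bm{H}^1_0(\Omega)\cap\bm{H}^s(\Omega))$, and using $\nabla\cdot\bm{e}=0$ together with the symmetry of $\mathcal{A}_h$ one obtains
\[
\|\bm{e}\|_0^2=\mathcal{A}_h(\bm{\phi},\xi;\bm{e},p^{\bm{f}}-p_h^{\bm{f}})=\mathcal{A}_h(\bm{u}^{\bm{f}}-\bm{u}_h^{\bm{f}},p^{\bm{f}}-p_h^{\bm{f}};\bm{\phi},\xi).
\]
Subtracting Galerkin orthogonality with $(\bm{v}_h,q_h)=(\bm{\Pi}_h\bm{\phi},P_h\xi)$, then invoking boundedness of $\mathcal{A}_h$ in $\vertiii{(\cdot,\cdot)}$, the bound \eqref{errapriori} for the primal factor, and $\vertiii{(\bm{\phi}-\bm{\Pi}_h\bm{\phi},\xi-P_h\xi)}\lesssim h^{\alpha}(\nu^{1/2}\|\bm{\phi}\|_{1+\alpha}+\nu^{-1/2}\|\xi\|_{\alpha})\lesssim h^{\alpha}\nu^{-1/2}\|\bm{e}\|_0$ for the dual factor, and finally dividing by $\|\bm{e}\|_0$ and re-expressing the right-hand side via \eqref{regularity}, one arrives at \eqref{errapriori2}.

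The main obstacle I anticipate is twofold. First, in the low-regularity regime $s\in(3/2,2)$ one must give a rigorous meaning to the mesh-dependent norm \eqref{desnorm} on the non-discrete function $\bm{u}^{\bm{f}}$ and control the resulting face terms by $\|\bm{u}^{\bm{f}}\|_s$ through scaled trace inequalities. Second, in the duality step one must carefully justify that $\mathcal{A}_h(\bm{\phi},\xi;\cdot,\cdot)$ extends consistently to the merely $\bm{H}^{\textrm{div}}$-regular error $\bm{e}$, which is not in $\bm{H}^1_0(\Omega)$, so that the symmetry identity above is legitimate. The remaining ingredients --- interpolation and trace estimates for $\bm{\Pi}_h$ and $P_h$, and the abstract saddle-point quasi-optimality --- are routine given the stability properties already recalled.
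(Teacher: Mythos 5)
The paper does not actually prove Theorem~\ref{errapriori1}: it is quoted directly from \cite{BCGKDS,GVGKBR,GKDS}, so your sketch is reconstructing the argument of those references. In spirit you follow the standard route (consistency of $\mathcal{A}_h$ for $s>3/2$, Galerkin orthogonality, quasi-optimality from discrete ellipticity and inf--sup, interpolation estimates, then Aubin--Nitsche), and the energy bound \eqref{errapriori} comes out correctly, including your caveat about reading $\vertiii{\cdot}$ with $h$-weighted edge terms on non-discrete functions.

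There is, however, a genuine gap in the duality step: as written it does not yield \eqref{errapriori2}. You bound $\|\bm{e}\|_0^2$ by the product of the \emph{full mixed} energy error (via \eqref{errapriori}, which contains $\nu^{-1/2}\|p^{\bm{f}}\|_{s-1}$) and the dual interpolation error $h^{\alpha}\nu^{-1/2}\|\bm{e}\|_0$; this gives
$\|\bm{e}\|_0\lesssim h^{s-1+\alpha}\bigl(\|\bm{u}^{\bm{f}}\|_{s}+\nu^{-1}\|p^{\bm{f}}\|_{s-1}\bigr)$,
which is weaker than the stated pressure- and $\nu$-independent bound $\|\bm{e}\|_0\lesssim h^{s-1+\alpha}\|\bm{u}^{\bm{f}}\|_{s}$, and the regularity hypothesis \eqref{regularity} cannot repair this (it would only replace the right-hand side by $\nu^{-1}\|\bm{f}\|$-type quantities). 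The stated estimate is the pressure-robust one, and it hinges on $\nabla\cdot\bm{V}_h\subset Q_h$ being used \emph{throughout}, not only in the identity $\,(\xi,\nabla\cdot\bm{e})=0$: first, since discretely divergence-free fields are exactly divergence-free, $\bm{u}_h^{\bm{f}}$ is the $a_h$-projection of $\bm{u}^{\bm{f}}$ onto the divergence-free subspace and the \emph{velocity-only} energy error obeys $\vertiii{\bm{u}^{\bm{f}}-\bm{u}_h^{\bm{f}}}\lesssim \nu^{1/2}h^{s-1}\|\bm{u}^{\bm{f}}\|_{s}$ with no pressure contribution; second, in the duality step both pressure couplings vanish identically, $(\xi-P_h\xi,\nabla\cdot\bm{e})=0$ because $\nabla\cdot\bm{e}=0$ and $(p^{\bm{f}}-p_h^{\bm{f}},\nabla\cdot(\bm{\phi}-\bm{\Pi}_h\bm{\phi}))=0$ because the Raviart--Thomas interpolant commutes with the divergence and $\nabla\cdot\bm{\phi}=0$, leaving $\|\bm{e}\|_0^2=a_h(\bm{e},\bm{\phi}-\bm{\Pi}_h\bm{\phi})$. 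Combining these two pressure-free ingredients with the dual regularity $\nu^{1/2}\|\bm{\phi}\|_{1+\alpha}\lesssim\nu^{-1/2}\|\bm{e}\|_0$ gives exactly \eqref{errapriori2}; you gesture at the divergence-free kernel in the Brezzi step, but you need to carry it through both the velocity energy bound and the duality argument.
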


\subsection{Numerical analysis of the eigenvalue problem}\label{numep}
We now apply the Babu\v{s}ka-Osborn theory to derive the 
convergence of eigenvalues and eigenfunctions of the discrete problem \eqref{hdiv11} to those of the continuous problem \eqref{stpre11}
and estimate the order of convergence. 
\par
Using the well posedness of the continuous source problem \eqref{sprob131}, the operators $T:\bm{L}^2(\Omega)\rightarrow\bm{H}^1_0(\Omega)$ and $S:\bm{L}^2(\Omega)\rightarrow L^2_0(\Omega)$
are well defined for any $\bm{f}\in\bm{L}^2(\Omega)$ such that
$T\bm{f}=\bm{u}^{\bm{f}}$ and $S\bm{f}=p^{\bm{f}}$ are the velocity and pressure components of the solution to problem \eqref{sprob131}.
\par
Since the discrete source problem \eqref{sprob13} is well posed, we define in the same manner the operators
$T_h:\bm{L}^2(\Omega)\rightarrow\bm{V}_h$ and $S_h:\bm{L}^2(\Omega)\rightarrow Q_h$ such that
$T_{h}\bm{f}=\bm{u}_h^{\bm{f}}$ and $S_h\bm{f}=p_h^{\bm{f}}$ are the discrete velocity and the discrete pressure approximations.
Note that the operator $T_h$ is well defined in $\bm{L}^2(\Omega)$ but not in $\bm{H}^1_0(\Omega)$.
Hence, we can only conclude convergence of the operators $T_h$ in $\bm{L}^2(\Omega)$ from the abstract theory.
\par
From the a priori estimates \eqref{errapriori} and \eqref{errapriori2} for the soure problem,
we conclude
\begin{align}\label{convere11}
\|T-T_h\|_{\mathcal{L}(\bm{L}^2(\Omega), \bm{L}^2(\Omega))}&\lesssim h^{s-1+\alpha},\\
\|S-S_h\|_{\mathcal{L}(\bm{L}^2(\Omega), L^2_0(\Omega))}&\lesssim h^{s-1},
\end{align}
which leads to the convergence of eigenvalues and eigenfunctions.
From the Babu\v{s}ka-Osborn theory we get the following rates of convergence
for the $\bm{L}^2(\Omega)$ error for the velocity component $\bm{u}$ and the $L^2(\Omega)$ error for the pressure component $p$ of eigenfunctions
under the regularity condition \eqref{regularity}
\begin{align}\label{eigerr1}
\|\bm{u}-\bm{u}_h\|_0&\lesssim h^{s-1+\alpha} \|\bm{u}\|_{s},\\
\|p-p_h\|_0&\lesssim h^{s-1}(\nu^{1/2}\|\bm{u}\|_{s}+\nu^{-1/2}\|p\|_{s-1}), \nonumber
\end{align}
for $s\in]3/2,k+1]$, and $\alpha=\min\{s-1,1\}$.
From Mercier et al. \cite{Mercier}, we conclude that the eigenvalues converge twice as fast as the eigenfunctions, i.e. 
\begin{align}\label{eigenrr11}
|\lambda-\lambda_h|\lesssim h^{2(s-1)}\quad\text{for } s\in]3/2,k+1].
\end{align}
\par
\begin{theorem}\label{eigerr2}
The following estimate holds for $s\in]3/2,k+1]$
\begin{align*}
\vertiii{\bm{u}-\bm{u}_h}\lesssim h^{s-1}(h^{s-1}+h^{\alpha}\|\bm{u}\|_{s}+\nu^{1/2}\|\bm{u}\|_{s}+\nu^{-1/2}\|p\|_{s-1}).
\end{align*}
\end{theorem}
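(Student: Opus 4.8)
The plan is to bound the DG-norm error in the eigenfunction $\bm u - \bm u_h$ by comparing $\bm u_h$ to the discrete source solution associated with the right-hand side $\lambda \bm u$, and then using the triangle inequality together with the already-established a priori estimate \eqref{errapriori} for the source problem. Concretely, set $\bm f = \lambda \bm u$, so that the continuous eigenpair $(\bm u, p, \lambda)$ is precisely the solution $(\bm u^{\bm f}, p^{\bm f})$ of the source problem \eqref{sprob131}. Let $(\bm u_h^{\bm f}, p_h^{\bm f}) = (T_h \bm f, S_h \bm f)$ be the corresponding discrete source approximation. Then
\begin{align*}
\vertiii{\bm u - \bm u_h} \le \vertiii{\bm u - \bm u_h^{\bm f}} + \vertiii{\bm u_h^{\bm f} - \bm u_h}.
\end{align*}
The first term is controlled directly by Theorem \ref{errapriori1}: since $\bm f = \lambda \bm u$ and $\lambda$ is fixed, \eqref{errapriori} gives $\vertiii{\bm u - \bm u_h^{\bm f}} \lesssim h^{s-1}(\nu^{1/2}\|\bm u\|_s + \nu^{-1/2}\|p\|_{s-1})$, which already produces the last two terms in the claimed bound.

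The main work is estimating the second term $\vertiii{\bm u_h^{\bm f} - \bm u_h}$, i.e. the difference between the discrete source solution with data $\lambda\bm u$ and the actual discrete eigenfunction. Both satisfy a discrete problem with the same bilinear form $\mathcal A_h$: the pair $(\bm u_h^{\bm f}, p_h^{\bm f})$ solves \eqref{sprob13} with data $\lambda\bm u$, while $(\bm u_h, p_h)$ solves \eqref{hdiv11}, i.e. \eqref{sprob13} with data $\lambda_h\bm u_h$. Subtracting, $(\bm u_h^{\bm f} - \bm u_h, p_h^{\bm f} - p_h)$ solves a discrete source problem with right-hand side $\lambda\bm u - \lambda_h\bm u_h$. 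By the discrete stability estimate (ellipticity of $a_h$ plus the discrete inf-sup condition, both recalled in the excerpt), we get
\begin{align*}
\vertiii{\bm u_h^{\bm f} - \bm u_h} \lesssim \nu^{-1/2}\|\lambda\bm u - \lambda_h\bm u_h\|_0
\le \nu^{-1/2}\big(|\lambda - \lambda_h|\,\|\bm u\|_0 + \lambda_h\|\bm u - \bm u_h\|_0\big).
\end{align*}
Now invoke the already-proven convergence rates: $\|\bm u - \bm u_h\|_0 \lesssim h^{s-1+\alpha}\|\bm u\|_s$ from \eqref{eigerr1}, $|\lambda-\lambda_h|\lesssim h^{2(s-1)}$ from \eqref{eigenrr11}, $\|\bm u\|_0 = 1$, and $\lambda_h$ bounded uniformly. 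This yields $\vertiii{\bm u_h^{\bm f}-\bm u_h} \lesssim \nu^{-1/2}(h^{2(s-1)} + h^{s-1+\alpha}\|\bm u\|_s) \lesssim h^{s-1}(h^{s-1} + h^\alpha\|\bm u\|_s)$, absorbing the $\nu^{-1/2}$ factor into the generic constant (or, more carefully, tracking it consistently with the statement's conventions). Combining the two bounds gives the assertion.

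The step I expect to be the main obstacle is making the discrete stability estimate for the \emph{difference} rigorous and in the right norm: one must verify that the $\nu^{-1/2}\|\cdot\|_0$ bound on $\vertiii{(\cdot,\cdot)}$ for the discrete source operator (stated in the excerpt only for data $\bm f\in\bm L^2$) applies with data $\lambda\bm u - \lambda_h\bm u_h$, and that the pressure difference does not spoil the velocity estimate — here the exact divergence-freeness $\nabla\cdot\bm V_h\subset Q_h$ and the discrete inf-sup condition are what make the argument clean, since the velocity component decouples. A minor additional point is to confirm that $\lambda_h \to \lambda$ (hence $\lambda_h$ is uniformly bounded for $h$ small), which follows from \eqref{eigenrr11}; and to be careful that the constants hidden in $\lesssim$ do not depend on $h$, only on $\lambda$, $\Omega$, and the shape-regularity of $\mathcal T_h$.
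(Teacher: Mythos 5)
Your proof is correct and takes essentially the same route as the paper: the paper writes the operator identity $\bm{u}-\bm{u}_h=(\lambda-\lambda_h)T\bm{u}+\lambda_h(T-T_h)\bm{u}+\lambda_h T_h(\bm{u}-\bm{u}_h)$ and bounds the last term by coercivity of $\mathcal{A}_h$ plus Cauchy--Schwarz (i.e.\ exactly the discrete stability bound you invoke for $T_h(\lambda\bm{u}-\lambda_h\bm{u}_h)$), so your two-term splitting merely merges the paper's first and third terms. Both arguments rest on the same ingredients -- Theorem~\ref{errapriori1} for $(T-T_h)\bm{u}$, together with \eqref{eigerr1} and \eqref{eigenrr11} -- and the $\nu^{-1/2}$ factor you flag is likewise hidden in the generic constants of the paper's proof, whose $\lesssim$ only claims independence of the mesh size.
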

\begin{proof}
Let $\lambda$ be the eigenvalue corresponding to the eigenfunction $\bm{u}$. Then it holds that
\begin{align*}
\bm{u}-\bm{u}_h&=\lambda T\bm{u}-\lambda_h T_h\bm{u}_h
=(\lambda-\lambda_h)T\bm{u}+\lambda_h(T-T_h)\bm{u}+\lambda_h T_h(\bm{u}-\bm{u}_h).
\end{align*}
It follows that
\begin{align*}
\vertiii{\bm{u}-\bm{u}_h}\lesssim |\lambda-\lambda_h| \|T\bm{u}\|_{1}+\lambda_h\vertiii{(T-T_h)\bm{u}}+\lambda_h\vertiii{T_h(\bm{u}-\bm{u}_h)}.
\end{align*}
The first two terms of the right hand side are directly estimated by \eqref{eigenrr11} and \eqref{errapriori},
and the last term is estimated using \eqref{sprob13} as follows
\begin{align*}
\vertiii{T_h(\bm{u}-\bm{u}_h)}^2\lesssim \mathcal{A}_h(T_h(\bm{u}-\bm{u}_h),p_h;T_h(\bm{u}-\bm{u}_h),p_h)=((\bm{u}-\bm{u}_h),T_h(\bm{u}-\bm{u}_h)).
\end{align*}  
Applying Cauchy-Schwartz inequality and \eqref{eigerr1}, implies
\begin{align*}
\vertiii{T_h(\bm{u}-\bm{u}_h)}\lesssim \|\bm{u}-\bm{u}_h\|_0\lesssim h^{s-1+\alpha} \|\bm{u}\|_{s},
\end{align*}
where $\alpha=\min\{s-1,1\}$. 
\end{proof}
\par
We will now establish a relationship between the eigenvalue and the eigenfunction errors.
In order to do so, we observe that the numerical scheme is consistent.
\begin{lemma}\label{consis11}
Let  $(\bm{u},p,\lambda)\in \bm{H}^1_0(\Omega)\times L^2_0(\Omega)\times \mathbb{R}_+$ be the solution of \eqref{stpre11}.
If $\bm{u}\in \bm{H}^2(\mathcal{T}_h)$ and $p\in H^{1}(\mathcal{T}_h)$, then
\begin{align*}
\mathcal{A}_h(\bm{u},p;\bm{v}_h,q_h) = \lambda(\bm{u},\bm{v}_h)\quad\forall (\bm{v}_h,q_h)\in \bm{V}_h\times Q_h.
\end{align*}
\end{lemma}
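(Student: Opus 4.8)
The plan is to verify that the exact eigenfunction $(\bm{u},p,\lambda)$, which is smooth enough by the hypotheses $\bm{u}\in\bm{H}^2(\mathcal{T}_h)$ and $p\in H^1(\mathcal{T}_h)$, satisfies the discrete variational identity by integrating the strong equation \eqref{pre12} against an arbitrary discrete test function $(\bm{v}_h,q_h)\in\bm{V}_h\times Q_h$ and carefully tracking all interface and boundary contributions that appear when integrating by parts cell by cell. First I would take the first PDE $-\nu\bigtriangleup\bm{u}+\nabla p=\lambda\bm{u}$, multiply by $\bm{v}_h$, and sum $\int_K$ over all $K\in\mathcal{T}_h$. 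Integrating the Laplacian term by parts on each cell produces $\nu(\nabla_h\bm{u},\nabla_h\bm{v}_h)$ plus a sum of edge integrals $-\nu\sum_{E}\int_E \nabla\bm{u}\bm{n}:[\![\bm{v}_h]\!]$ (in the jump/sum notation of the paper), and similarly the pressure term yields $-(p,\nabla\cdot\bm{v}_h)$ plus edge integrals involving $p$ and the normal trace of $\bm{v}_h$. The second PDE $\nabla\cdot\bm{u}=0$ tested against $q_h$ trivially gives the term $-(q_h,\nabla\cdot\bm{u})=0$, matching $\mathcal{A}_h$.

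The key structural observations that make the edge terms collapse to exactly the consistent terms in $a_h^i$ and $a_h^\partial$ are: (i) since $\bm{v}_h\in\bm{V}_h\subset\bm{H}^{\textrm{div}}_0(\Omega)$, its normal component $\bm{v}_h\cdot\bm{n}$ is single-valued across interior edges and vanishes on $\partial\Omega$, so all edge contributions containing $p\,(\bm{v}_h\cdot\bm{n})$ cancel between neighbouring cells and vanish on the boundary; (ii) the exact velocity $\bm{u}\in\bm{H}^1_0(\Omega)$ is continuous and vanishes on $\partial\Omega$, hence $[\![\bm{u}\otimes\bm{n}]\!]=0$ on interior edges and $\bm{u}\otimes\bm{n}=0$ on boundary edges, which makes the penalty terms $a_p^i(\bm{u},\bm{v}_h)$ and $a_p^\partial(\bm{u},\bm{v}_h)$ vanish \emph{and} makes the symmetrization terms $a_c^i(\bm{v}_h,\bm{u})$ and $a_c^\partial(\bm{v}_h,\bm{u})$ vanish; (iii) since $\bm{u}\in\bm{H}^2(\mathcal{T}_h)$ is globally $\bm{H}^1$ and has (by elliptic regularity implicit in the regularity assumptions, or at least in the distributional sense of the equation) a well-defined normal flux $\nabla\bm{u}\bm{n}$ that is single-valued across interior edges, the consistency edge integrals left over from the integration by parts are precisely $-a_c^i(\bm{u},\bm{v}_h)-a_c^\partial(\bm{u},\bm{v}_h)$ (up to the factor conventions: the $\tfrac12$ and the averaging are absorbed because the flux is continuous so the sum over the two sides doubles it). Assembling, the right-hand side edge debris reorganizes into exactly $a_h^i(\bm{u},\bm{v}_h)+a_h^\partial(\bm{u},\bm{v}_h)$, so that $\nu(\nabla_h\bm{u},\nabla_h\bm{v}_h)+a_h^i(\bm{u},\bm{v}_h)+a_h^\partial(\bm{u},\bm{v}_h)-(p,\nabla\cdot\bm{v}_h)-(q_h,\nabla\cdot\bm{u})=\lambda(\bm{u},\bm{v}_h)$, which is the claimed identity $\mathcal{A}_h(\bm{u},p;\bm{v}_h,q_h)=\lambda(\bm{u},\bm{v}_h)$.

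The step I expect to require the most care is the bookkeeping of the edge terms with their numerical factors and orientation conventions: the paper's "sum operator" $[\![v]\!]=v_++v_-$ differs from the usual average, the normal is always $\bm{n}_+$, and the symmetrized consistency bilinear form is written with a $\tfrac{\nu}{2}$ on interior edges but $\nu$ (and a $2\nu$ in the penalty) on boundary edges. One has to check that when $\nabla\bm{u}\bm{n}$ is continuous across an interior edge, the contribution $-\nu\int_E(\nabla\bm{u})\bm{n}_+:[\![\bm{v}_h\otimes\bm{n}]\!]$ coming from summing the two cell integrals matches $-a_c^i(\bm{u},\bm{v}_h)=-\tfrac{\nu}{2}\int_E[\![\nabla\bm{u}]\!]:[\![\bm{v}_h\otimes\bm{n}]\!]$ — here $[\![\nabla\bm{u}]\!]$ must be read so that $[\![\nabla\bm{u}]\!]:[\![\bm{v}_h\otimes\bm{n}]\!]=2(\nabla\bm{u})\bm{n}_+\cdot[\![\bm{v}_h]\!]$ when $\nabla\bm{u}$ is single-valued, absorbing the factor $\tfrac12$. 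A clean way to avoid error-prone edge algebra is to first note the identity $\sum_{K\in\mathcal{T}_h}\int_{\partial K}(\nu\nabla\bm{u}\bm{n}_K-p\,\bm{n}_K)\cdot\bm{v}_h = \sum_{E\in\mathcal{E}_h^i}\int_E(\nu\nabla\bm{u}\bm{n}_+-p\bm{n}_+)\cdot[\![\bm{v}_h]\!] + \sum_{E\in\mathcal{E}_h^\partial}\int_E(\nu\nabla\bm{u}\bm{n}-p\bm{n})\cdot\bm{v}_h$, use $[\![\bm{v}_h\cdot\bm{n}]\!]=0$ and $\bm{v}_h\cdot\bm{n}|_{\partial\Omega}=0$ to kill every term with $p$, and then recognize the remaining $\nu\nabla\bm{u}$ edge terms as the (symmetric, penalty-augmented) consistency terms using the vanishing jumps of $\bm{u}$. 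Then one invokes the continuous weak form \eqref{variational} tested with the smooth $\bm{v}_h$ (legitimate since $\bm{v}_h$, though not in $\bm{H}^1_0$, can be paired against the strong residual) to identify the volume part with $\lambda(\bm{u},\bm{v}_h)$; alternatively argue directly from \eqref{pre12} in $\bm{L}^2$ since all terms are now in $L^2(K)$ cellwise.
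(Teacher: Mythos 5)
Your proof is correct, and in substance it is the argument that the paper delegates to the literature: the paper's own proof of Lemma~\ref{consis11} is a single line observing that $(\bm{u},p)$ solves the Stokes source problem with datum $\bm{f}=\lambda\bm{u}$ and citing the consistency of the $H^{\textrm{div}}$-DG discretization of that source problem (Lemma 7.5 in \cite{AT}); your cell-by-cell integration by parts, the cancellation of all pressure edge terms via the single-valued normal trace of $\bm{v}_h\in\bm{H}^{\textrm{div}}_0(\Omega)$, and the vanishing of the penalty and adjoint-consistency terms via $[\![\bm{u}\otimes\bm{n}]\!]=0$ and $\bm{u}|_{\partial\Omega}=\bm{0}$ are exactly what that citation hides. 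What your version buys is self-containedness and an explicit check of the paper's nonstandard conventions (the sum operator $[\![\nabla\bm{u}]\!]=\nabla\bm{u}_++\nabla\bm{u}_-$ absorbing the factor $\tfrac{\nu}{2}$ in $a^i_c$, the $\nu$ and $2\nu\gamma_h$ weights on boundary edges), which the paper never verifies in print. One point to tighten: under the stated broken regularity $\bm{u}\in\bm{H}^2(\mathcal{T}_h)$, $p\in H^1(\mathcal{T}_h)$ alone, it is the combined stress $(\nu\nabla\bm{u}-p\bm{I})\bm{n}$, and not $\nabla\bm{u}\,\bm{n}$ and $p$ separately, whose interior-edge trace is a priori single-valued, since $\nu\nabla\bm{u}-p\bm{I}$ has rowwise divergence $-\lambda\bm{u}\in\bm{L}^2(\Omega)$ and hence lies in $\bm{H}^{\textrm{div}}(\Omega,\mathbb{R}^{2\times 2})$; your appeal to ``elliptic regularity'' for the single-valuedness of $\nabla\bm{u}\,\bm{n}$ itself is not needed and not justified by the hypotheses as stated. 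Your own ``clean'' combined-flux identity is the right repair: the pressure part of the averaged flux drops out because the normal component of $\bm{v}_h$ has no jump across interior edges and vanishes on $\partial\Omega$, and the remaining averaged viscous flux matches $a^i_c(\bm{u},\bm{v}_h)$ exactly because of the $\tfrac12$ in its definition, so the conclusion stands without any extra smoothness.
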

\begin{proof}
The result follows from the consistency of the discontinuous Galerkin finite element method for the source problem \cite[Lemma 7.5]{AT}. 
\end{proof}
\begin{theorem}\label{riziden11}
Let  $(\bm{u},p,\lambda)\in \bm{H}^1_0(\Omega)\times L^2_0(\Omega)\times \mathbb{R}_+$ be the solution of \eqref{stpre11}
and $(\bm{u}_h,p_h)\in \bm{V}_h\times Q_h$  with $\|\bm{u}_h\|_0\neq0$. 
{If $\bm{u}\in \bm{H}^2(\mathcal{T}_h)$ and $p\in H^{1}(\mathcal{T}_h)$,} then the Rayleigh quotient satisfies the following identity 
\begin{align*}
\frac{\mathcal{A}_h(\bm{u}_h,p_h;\bm{u}_h,p_h)}{\|\bm{u}_h\|_{0}^2}-\lambda=& \frac{\mathcal{A}_h(\bm{u}-\bm{u}_h,p-p_h;\bm{u}-\bm{u}_h,p-p_h)}{\|\bm{u}_h\|_{0}^2}-\lambda\frac{\|\bm{u}-\bm{u}_h\|^2_{0}}{\|\bm{u}_h\|_{0}^2}.
\end{align*}
\end{theorem}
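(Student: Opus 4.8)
The plan is to expand the numerator of the Rayleigh quotient difference directly, exploiting the consistency result of Lemma~\ref{consis11} together with the symmetry of the bilinear form $\mathcal{A}_h$ in its velocity-pressure arguments. First I would write $\bm{u}_h = \bm{u} - (\bm{u}-\bm{u}_h)$ and $p_h = p - (p-p_h)$ and substitute these into $\mathcal{A}_h(\bm{u}_h,p_h;\bm{u}_h,p_h)$. By bilinearity this produces four terms: $\mathcal{A}_h(\bm{u},p;\bm{u},p)$, minus $\mathcal{A}_h(\bm{u},p;\bm{u}-\bm{u}_h,p-p_h)$, minus $\mathcal{A}_h(\bm{u}-\bm{u}_h,p-p_h;\bm{u},p)$, plus $\mathcal{A}_h(\bm{u}-\bm{u}_h,p-p_h;\bm{u}-\bm{u}_h,p-p_h)$.

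Next I would evaluate the cross terms. Since $\mathcal{A}_h$ is symmetric (note $a_h(\cdot,\cdot)$ is symmetric and the terms $-(p_h,\nabla\cdot\bm{v}_h)-(q_h,\nabla\cdot\bm{u}_h)$ are symmetric under swapping $(\bm{u}_h,p_h)\leftrightarrow(\bm{v}_h,q_h)$), the two cross terms are equal, so together they contribute $-2\mathcal{A}_h(\bm{u},p;\bm{u}-\bm{u}_h,p-p_h)$. Now I would apply Lemma~\ref{consis11} with the test pair $(\bm{v}_h,q_h)=(\bm{u}-\bm{u}_h,p-p_h)\in\bm{V}_h\times Q_h$ — here one must check that this difference indeed lies in the discrete space, which holds since $\bm{u}_h\in\bm{V}_h$, $p_h\in Q_h$, and $\bm{u}$ restricted to the discrete space... actually $\bm{u}\notin\bm{V}_h$ in general, so instead Lemma~\ref{consis11} should be invoked as is: it states $\mathcal{A}_h(\bm{u},p;\bm{v}_h,q_h)=\lambda(\bm{u},\bm{v}_h)$ for all discrete test functions, and $\bm{u}-\bm{u}_h\in\bm{V}_h$ is a legitimate discrete test function. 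Thus the cross terms give $-2\lambda(\bm{u},\bm{u}-\bm{u}_h)$. For the leading term $\mathcal{A}_h(\bm{u},p;\bm{u},p)$, I would use consistency once more, but now with test function $\bm{u}_h\in\bm{V}_h$ giving $\mathcal{A}_h(\bm{u},p;\bm{u}_h,p_h)=\lambda(\bm{u},\bm{u}_h)$, combined with $\mathcal{A}_h(\bm{u},p;\bm{u},p)=\mathcal{A}_h(\bm{u},p;\bm{u}_h,p_h)+\mathcal{A}_h(\bm{u},p;\bm{u}-\bm{u}_h,p-p_h)$; alternatively, and more cleanly, observe that $\mathcal{A}_h(\bm{u},p;\bm{u},p)=\lambda\|\bm{u}\|_0^2$ does \emph{not} hold directly since $\bm{u}$ is not a discrete function — so the correct route is to keep $\mathcal{A}_h(\bm{u},p;\bm{u},p)$ and combine it with the cross terms via the identity $\mathcal{A}_h(\bm{u},p;\bm{u},p) - 2\lambda(\bm{u},\bm{u}-\bm{u}_h) = \mathcal{A}_h(\bm{u},p;\bm{u},p) - 2\lambda(\bm{u},\bm{u}) + 2\lambda(\bm{u},\bm{u}_h)$, and use that for the \emph{continuous} problem $\mathcal{A}(\bm{u},p;\bm{u},p)=\lambda\|\bm{u}\|_0^2=\lambda$ together with the fact that $\mathcal{A}_h$ coincides with $\mathcal{A}$ on the continuous solution (the penalty and consistency face terms vanish because $[\![\bm{u}\otimes\bm{n}]\!]=0$ for $\bm{u}\in\bm{H}^1_0(\Omega)$ and the remaining terms telescope by integration by parts). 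This yields $\mathcal{A}_h(\bm{u},p;\bm{u},p)=\lambda$.

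Assembling the pieces, the numerator becomes $\lambda - 2\lambda(\bm{u},\bm{u}_h) + \mathcal{A}_h(\bm{u}-\bm{u}_h,p-p_h;\bm{u}-\bm{u}_h,p-p_h)$. Then I would complete the square on the $L^2$ terms: $\lambda - 2\lambda(\bm{u},\bm{u}_h) = \lambda\|\bm{u}-\bm{u}_h\|_0^2 - \lambda\|\bm{u}_h\|_0^2 + \lambda - \lambda\|\bm{u}\|_0^2 = \lambda\|\bm{u}-\bm{u}_h\|_0^2 - \lambda\|\bm{u}_h\|_0^2$, using $\|\bm{u}\|_0=1$. Therefore $\mathcal{A}_h(\bm{u}_h,p_h;\bm{u}_h,p_h) = \mathcal{A}_h(\bm{u}-\bm{u}_h,p-p_h;\bm{u}-\bm{u}_h,p-p_h) + \lambda\|\bm{u}-\bm{u}_h\|_0^2 - \lambda\|\bm{u}_h\|_0^2$. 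Dividing through by $\|\bm{u}_h\|_0^2$ and subtracting $\lambda$ gives exactly the claimed identity. The main obstacle — and the step requiring the most care — is the justification that $\mathcal{A}_h(\bm{u},p;\bm{u},p)=\lambda$, i.e.\ that the DG bilinear form evaluated on the exact (conforming, $\bm{H}^1_0$) solution reproduces the continuous form; this rests on the regularity hypothesis $\bm{u}\in\bm{H}^2(\mathcal{T}_h)$, $p\in H^1(\mathcal{T}_h)$ so that all face integrals are well defined, and on the vanishing of the jump terms for the continuous solution, which is precisely the content underlying Lemma~\ref{consis11}.
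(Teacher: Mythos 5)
Your overall strategy is the same as the paper's: expand using bilinearity and symmetry of $\mathcal{A}_h$, invoke the consistency of Lemma~\ref{consis11} together with the fact that $\mathcal{A}_h$ coincides with $\mathcal{A}$ on the continuous pair $(\bm{u},p)$, and finish with an elementary $L^2$ identity. However, as written the argument has two genuine problems. First, you justify the value of the cross term by declaring $\bm{u}-\bm{u}_h\in\bm{V}_h$ ``a legitimate discrete test function'' immediately after correctly observing that $\bm{u}\notin\bm{V}_h$; this is false, since $\bm{u}-\bm{u}_h$ is not discrete. The valid route (which you only gesture at afterwards) is to split $\mathcal{A}_h(\bm{u},p;\bm{u}-\bm{u}_h,p-p_h)=\mathcal{A}_h(\bm{u},p;\bm{u},p)-\mathcal{A}_h(\bm{u},p;\bm{u}_h,p_h)$, evaluate the second term by Lemma~\ref{consis11} with the genuinely discrete pair $(\bm{u}_h,p_h)$, and the first by $\mathcal{A}_h(\bm{u},p;\bm{u},p)=\mathcal{A}(\bm{u},p;\bm{u},p)=\lambda(\bm{u},\bm{u})$ from \eqref{stpre11}.

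Second, and decisively, the assembly contains a sign error that makes your stated conclusion false. With $\mathcal{A}_h(\bm{u},p;\bm{u},p)=\lambda$ and $\mathcal{A}_h(\bm{u},p;\bm{u}-\bm{u}_h,p-p_h)=\lambda-\lambda(\bm{u},\bm{u}_h)$, your expansion gives $\mathcal{A}_h(\bm{u}_h,p_h;\bm{u}_h,p_h)=2\lambda(\bm{u},\bm{u}_h)-\lambda+\mathcal{A}_h(\bm{u}-\bm{u}_h,p-p_h;\bm{u}-\bm{u}_h,p-p_h)$, not ``$\lambda-2\lambda(\bm{u},\bm{u}_h)+\mathcal{A}_h(\bm{u}-\bm{u}_h,p-p_h;\bm{u}-\bm{u}_h,p-p_h)$''; you have effectively swapped the roles of the error form and the discrete form, since $\lambda-2\lambda(\bm{u},\bm{u}_h)+\mathcal{A}_h(\bm{u}_h,p_h;\bm{u}_h,p_h)$ is the expansion of the error form. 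Consequently your final displayed identity, $\mathcal{A}_h(\bm{u}_h,p_h;\bm{u}_h,p_h)=\mathcal{A}_h(\bm{u}-\bm{u}_h,p-p_h;\bm{u}-\bm{u}_h,p-p_h)+\lambda\|\bm{u}-\bm{u}_h\|_0^2-\lambda\|\bm{u}_h\|_0^2$, carries the wrong signs, and dividing by $\|\bm{u}_h\|_0^2$ and subtracting $\lambda$ produces a spurious $-2\lambda$ and the opposite sign on the error term, not the claimed identity. The correct relation is $\mathcal{A}_h(\bm{u}_h,p_h;\bm{u}_h,p_h)-\lambda\|\bm{u}_h\|_0^2=\mathcal{A}_h(\bm{u}-\bm{u}_h,p-p_h;\bm{u}-\bm{u}_h,p-p_h)-\lambda\|\bm{u}-\bm{u}_h\|_0^2$, which your ingredients do yield once the signs are repaired; the paper avoids this trap by expanding $\mathcal{A}_h(\bm{u}-\bm{u}_h,p-p_h;\bm{u}-\bm{u}_h,p-p_h)$ and $\lambda\|\bm{u}-\bm{u}_h\|_0^2$ separately and subtracting, which moreover never uses the normalization $\|\bm{u}\|_0=1$ because the $\lambda(\bm{u},\bm{u})$ terms cancel.
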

\begin{proof}
Note that 
\begin{align*}
\mathcal{A}_h(\bm{u},p;\bm{u},p)=\mathcal{A}(\bm{u},p;\bm{u},p)\quad\forall \;(\bm{u},p)\in \bm{H}^{1}_0(\Omega)\times L^2_0(\Omega).
\end{align*}
Moreover, from consistency we get
\begin{align}\label{eigenval11}
\begin{split}
&\mathcal{A}_h(\bm{u}-\bm{u}_h,p-p_h;\bm{u}-\bm{u}_h,p-p_h)\\
&\qquad=\mathcal{A}(\bm{u},p;\bm{u},p)+\mathcal{A}_h(\bm{u}_h,p_h;\bm{u}_h,p_h)
-2\mathcal{A}_h(\bm{u}_h,p_h;\bm{u},p)\\
&\qquad=\lambda(\bm{u},\bm{u})+\mathcal{A}_h(\bm{u}_h,p_h;\bm{u}_h,p_h)
- 2\lambda(\bm{u},\bm{u}_h).
\end{split}
\end{align}
Next we write the following identity
\begin{align}\label{eigenval112}
\lambda(\bm{u}-\bm{u}_h,\bm{u}-\bm{u}_h)=\lambda(\bm{u},\bm{u})+\lambda(\bm{u}_h,\bm{u}_h)-2\lambda(\bm{u},\bm{u}_h).
\end{align}
Subtracting \eqref{eigenval112} from \eqref{eigenval11}, we obtain
\begin{align*}
&\mathcal{A}_h(\bm{u}-\bm{u}_h,p-p_h;\bm{u}-\bm{u}_h,p-p_h)-\lambda(\bm{u}-\bm{u}_h,\bm{u}-\bm{u}_h)\\
&\qquad=\mathcal{A}_h(\bm{u}_h,p_h;\bm{u}_h,p_h)-\lambda(\bm{u}_h,\bm{u}_h).
\end{align*}
Dividing by $(\bm{u}_h,\bm{u}_h)$ on both sides in the above equation ends the proof. 
\end{proof}

\section{A posteriori error analysis}\label{aposterrestana}
In this section, we present a residual-based 
a posteriori error estimator for the Stokes eigenvalue problem.
\par
Let $(\bm{u}_h,p_h,\lambda_h)\in \bm{V}_h\times Q_h \times \mathbb{R}_+$ be an eigentriple approximation.
For each $K\in \mathcal{T}_h$, 
 the interior residual estimator $\eta_{R_K}$ is defined by
 \begin{align*}
 \eta_{R_K}^2 :=\nu^{-1}h^2_K\|\lambda_h \bm{u}_h+\nu\Delta\bm{u}_h -\nabla p_h\|_{0,K}^2,
 \end{align*}
and the edge residual estimator $\eta_{E_K}$ by
\begin{align*}
\eta_{E_K}^2 :=\nu^{-1}\sum_{E\in \partial K\setminus\partial\Omega}h_E \| [\![(p_h\bm{I}-\nu\nabla \bm{u}_h)\bm{n}]\!]\|^{2}_{0,E},
\end{align*}
where $\bm{I}$ denotes the $2\times2$ identity matrix.
Next, we introduce the estimator $\eta_{J_K}$, which measures the jump of the approximate solution $\bm{u}_h$,
\begin{align*}
\eta_{J_K}^2 :=\nu \sum_{E\in \partial K} \gamma_h \|[\![\bm{u}_h\otimes\bm{n}]\!]\|^{2}_{0,E}.
\end{align*}
The local error indicator, which is the sum of the above three terms, is defined as
\begin{align*}
\eta_K^2 :=\eta_{R_K}^2+\eta_{E_K}^2+\eta_{J_K}^2.
\end{align*}
 Finally, we introduce the (global) a posteriori error estimator
 \begin{align}\label{errest1}
 \eta_h :=\Big(\sum_{K\in\mathcal{T}_h}\eta_K^2\Big)^{1/2}.
 \end{align}

\subsection{Additional stability property}
In the proof of reliability we will use the following auxiliary stability property following \cite[Lemma 4.3]{HSW}, \cite[Section 2.3]{GKDS}. 
We include the proof for the $H^{\textrm{div}}$-DG formulation of the Stokes problem for completeness.
\begin{lemma}\label{Sinsuplem12}
For any $(\bm{u},p)\in\bm{H}^{1}_0(\Omega)\times L^2_0(\Omega)$, there exists a pair $(\bm{v},q)\in \bm{H}^{1}_0(\Omega)\setminus\{0\}\times L^2_0(\Omega)$ 
with $\vertiii{(\bm{v},q)}\lesssim \vertiii{(\bm{u},p)}$ and
\begin{align*}
 \mathcal{A}_h(\bm{u},p; \bm{v},q)\gtrsim \vertiii{(\bm{u},p)}^2.
\end{align*}
\end{lemma}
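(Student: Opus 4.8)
The plan is to reduce $\mathcal{A}_h$ to the continuous form on $\bm{H}^1_0(\Omega)\times L^2_0(\Omega)$ and then run the classical saddle-point stability argument, tracking the $\nu$-weights carefully. First I would observe that for any $(\bm{u},p),(\bm{v},q)\in\bm{H}^1_0(\Omega)\times L^2_0(\Omega)$ all face contributions in $a_h$ vanish: the penalty terms $a^i_p,a^\partial_p$ contain a factor $[\![\bm{u}\otimes\bm{n}]\!]$ or $[\![\bm{v}\otimes\bm{n}]\!]$, which is zero because $\bm{u},\bm{v}$ are single-valued across interior edges and vanish on $\partial\Omega$; the consistency terms $a^i_c,a^\partial_c$ likewise carry a factor $[\![\bm{v}\otimes\bm{n}]\!]$ or $[\![\bm{u}\otimes\bm{n}]\!]$ on interior edges and $\bm{v}\otimes\bm{n}$ or $\bm{u}\otimes\bm{n}$ on boundary edges, which again vanish. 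Hence $\mathcal{A}_h(\bm{u},p;\bm{v},q)=\nu(\nabla\bm{u},\nabla\bm{v})-(p,\nabla\cdot\bm{v})-(q,\nabla\cdot\bm{u})$ and $\vertiii{(\bm{u},p)}^2=\nu\|\nabla\bm{u}\|_0^2+\nu^{-1}\|p\|_0^2$ on this space.

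Next I would invoke the continuous inf-sup condition \eqref{inf-sup} in its equivalent form: for $p\in L^2_0(\Omega)$ there exists $\bm{w}\in\bm{H}^1_0(\Omega)$ with $\nabla\cdot\bm{w}=p$ and $\|\nabla\bm{w}\|_0\le C_\Omega\|p\|_0$, where $C_\Omega$ depends only on $\Omega$. Then I set $q=-p$ and $\bm{v}=\bm{u}-\delta\bm{w}$ with $\delta>0$ to be chosen. A direct computation using $\nabla\cdot\bm{w}=p$ gives $\mathcal{A}_h(\bm{u},p;\bm{v},q)=\nu\|\nabla\bm{u}\|_0^2+\delta\|p\|_0^2-\delta\nu(\nabla\bm{u},\nabla\bm{w})$, and Young's inequality applied to the cross term (absorbing $\tfrac{\nu}{2}\|\nabla\bm{u}\|_0^2$ and using $\|\nabla\bm{w}\|_0\le C_\Omega\|p\|_0$) yields $\mathcal{A}_h(\bm{u},p;\bm{v},q)\ge\tfrac{\nu}{2}\|\nabla\bm{u}\|_0^2+\delta\bigl(1-\tfrac{\delta\nu C_\Omega^2}{2}\bigr)\|p\|_0^2$.

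The one point requiring care, and really the whole content of the estimate, is choosing $\delta$ so the constants stay independent of $\nu$: taking $\delta=(\nu C_\Omega^2)^{-1}$ makes the pressure factor equal to $\tfrac12(\nu C_\Omega^2)^{-1}$, giving $\mathcal{A}_h(\bm{u},p;\bm{v},q)\gtrsim\nu\|\nabla\bm{u}\|_0^2+\nu^{-1}\|p\|_0^2=\vertiii{(\bm{u},p)}^2$ with a constant depending only on $C_\Omega$. For the norm bound, with the same $\delta$, $\vertiii{(\bm{v},q)}^2=\nu\|\nabla(\bm{u}-\delta\bm{w})\|_0^2+\nu^{-1}\|p\|_0^2\le 2\nu\|\nabla\bm{u}\|_0^2+2\delta^2\nu C_\Omega^2\|p\|_0^2+\nu^{-1}\|p\|_0^2$, and $2\delta^2\nu C_\Omega^2=2C_\Omega^{-2}\nu^{-1}$, so $\vertiii{(\bm{v},q)}^2\lesssim\vertiii{(\bm{u},p)}^2$. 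Finally, $\bm{v}\neq\bm{0}$ is automatic whenever $(\bm{u},p)\neq(\bm{0},0)$ — a vanishing $\bm{v}$ together with $q=-p$ would force $0=\mathcal{A}_h(\bm{u},p;\bm{v},q)\gtrsim\vertiii{(\bm{u},p)}^2>0$ — and the lemma is only needed in this case (it is applied to the nonzero error pair). I do not expect a genuine obstacle; the argument is standard once the weight $\delta\sim\nu^{-1}$ is identified.
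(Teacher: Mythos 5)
Your proof is correct and is essentially the paper's own argument: both obtain a velocity $\bm{w}$ from the continuous inf-sup condition (you via the right inverse of the divergence, $\nabla\cdot\bm{w}=p$), test with $(\bm{u}\mp\delta\bm{w},-p)$, absorb the cross term by Young's inequality with the same effective $\nu$-scaling (the paper folds the $\nu^{-1}$ into the normalization of $\bm{w}$ and takes $\delta$ of order one, you keep $\bm{w}$ unscaled and take $\delta\sim\nu^{-1}$), and verify the norm bound identically, while your explicit observation that the face terms of $\mathcal{A}_h$ vanish on $\bm{H}^1_0(\Omega)\times L^2_0(\Omega)$ is precisely what the paper uses implicitly. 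The only slip is the closing remark on $\bm{v}\neq\bm{0}$: with $q=-p$ one has $\mathcal{A}_h(\bm{u},p;\bm{0},-p)=(p,\nabla\cdot\bm{u})$, which need not vanish, so that contradiction argument does not stand as stated — but this point is immaterial to how the lemma is applied and is not addressed in the paper's proof either.
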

{\begin{proof}
From the continuous inf-sup condition \eqref{inf-sup} we deduce that there exists a $\bm{w}\in\bm{H}^1_0(\Omega)$ such that 
\begin{align*}
-(p,\nabla\cdot\bm{w})\ge C_{\Omega}\nu^{-1} \|p\|^2_{0},
\qquad\text{and}\qquad
 \nu^{1/2}\|\nabla\bm{w}\|_0\le\nu^{-1/2} \|p\|_{0},
\end{align*}
where $C_\Omega>0$ is the continuous inf-sup constant, which only depends on $\Omega$.
If $(\bm{u},p)\in \bm{H}^{1}_0(\Omega)\times L^2_0(\Omega)$, then it holds that
\begin{align}\label{infsup1}
\mathcal{A}_h(\bm{u},p;\bm{u},-p)\ge \vertiii{\bm{u}}^2.
\end{align}
 Moreover, using $\nu^{1/2}\|\nabla\bm{w}\|_0\le\nu^{-1/2} \|p\|_{0}$, continuity and Youngs inequality we get
\begin{align}\label{infsup2}
\begin{split}
\mathcal{A}_h(\bm{u},p;\bm{w},0)&\ge C_{\Omega}\nu^{-1}\|p\|^2_{0}-C\vertiii{\bm{u}}\;\vertiii{\bm{w}}\\
&\ge C_{\Omega}\nu^{-1}\|p\|^2_{0}-\nu^{-1/2}C\vertiii{\bm{u}}\;\|p\|_{0}\\
&\ge \Bigg(C_{\Omega}-\frac{1}{\epsilon}\Bigg)\nu^{-1}\|p\|^2_{0}-\epsilon  C^2\vertiii{\bm{u}}^2,
\end{split}
\end{align} 
for a positive generic continuity constant $C> 0$.
Using equations \eqref{infsup1} and \eqref{infsup2}, we have
\begin{align*}
\mathcal{A}_h(\bm{u},p;\bm{u}+\delta \bm{w},-p)&=\mathcal{A}_h(\bm{u},p;\bm{u},-p)+\delta\mathcal{A}_h(\bm{u},p;\bm{w},0)\\
&\ge \vertiii{\bm{u}}^2+\delta \Bigg(C_{\Omega}-\frac{1}{\epsilon}\Bigg)\nu^{-1}\|p\|^2_{0}-\delta\epsilon C^2\vertiii{\bm{u}}^2\\
&\ge (1-\delta\epsilon C^2)\vertiii{\bm{u}}^2+\delta \Bigg(C_{\Omega}-\frac{1}{\epsilon}\Bigg)\nu^{-1}\|p\|^2_{0}.
\end{align*}
Taking $\epsilon =2/C_\Omega$ and $\delta=C_\Omega/(4 C^2)$, it follows
\begin{align}\label{infsup11}
\mathcal{A}_h(\bm{u},p;\bm{u}+\delta \bm{w},-p)&\ge \min\Bigg\{\frac{1}{2},\frac{C_\Omega^2}{8 C^2}\Bigg\}\vertiii{(\bm{u},p)}^2.
\end{align}
Moreover, from $\nu^{1/2}\|\nabla\bm{w}\|_0\le \nu^{-1/2}\|p\|_{0}$ we get
\begin{align}\label{infsup21}
\begin{split}
\vertiii{(\bm{u}+\delta\bm{w},-p)}^2
&\le 2\vertiii{\bm{u}}^2 + 2\delta^2\nu\|\nabla\bm{w}\|_0^2 + \nu^{-1}\|p\|_0^2\\
&\le \max\left\{2,\left(1+\frac{C_\Omega^2}{8 C^4}\right)\right\}\vertiii{(\bm{u},p)}^2.
\end{split}
\end{align}
Combining equations \eqref{infsup11} and  \eqref{infsup21}, proves the final assertion with $\bm{v}=\bm{u}+\delta\bm{w}$ and $q=-p$. 
\end{proof}}

\subsection{Reliability}
First we define the discontinuous $RT_k$ space $\widetilde{\bm{V}}_h=\{\bm{v}\in \bm{L}^2:\bm{v}|_K\in RT_{k}(K),\, K\in\mathcal{T}_h\}$. 
As in \cite{HSW,GKDS}, we define ${\bm{V}}_h^c=\widetilde{\bm{V}}_h\cap \bm{H}^{1}_{0}(\Omega)$. 
The orthogonal complement of ${\bm{V}}_h^c$ in $\widetilde{\bm{V}}_h$ with respect to the norm $\vertiii{\cdot}$ 
is defined by ${\bm{V}}_h^{\perp}$. Then we obtain $\widetilde{\bm{V}}_h={\bm{V}}_h^c\oplus{\bm{V}}_h^{\perp}$.
 Hence, we decompose the DG velocity approximation uniquely into
 \begin{align*}
 \bm{u}_h=\bm{u}_h^c+\bm{u}_h^r,
\end{align*}  
where $\bm{u}_h^c\in {\bm{V}}_h^c$ and $\bm{u}_h^r\in {\bm{V}}_h^{\perp}$.
Using the triangle inequality, we can write
\begin{align}
\vertiii{\bm{u}-\bm{u}_h}\le  \vertiii{\bm{u}-\bm{u}_h^c}+\vertiii{\bm{u}_h^r},
\end{align}
and from \cite[Proposition 4.1]{HSW} we get the upper bound for the second term
\begin{align}\label{realilem1}
\vertiii{\bm{u}_h^r}\lesssim \Big(\sum_{K\in\mathcal{T}_h}\eta_{J_K}^2\Big)^{1/2}.
\end{align}
\par
Note that the DG form  $a_{h}(\bm{u},\bm{v})$ is not well defined for functions $\bm{u},\bm{v}$ which belong to $\bm{H}^{1}_0(\Omega)$. One can overcome this difficulty by the use of a suitable lifting operator, cf. \cite{BCGKDS,GKDS}. Here, we discuss a different approach where the DG form $a_{h}(\cdot,\cdot)$ is split into several parts,
\begin{align*}
a_{h}(\bm{u},\bm{v}) = \nu(\nabla_h \bm{u}, \nabla_h\bm{v}) + C_{h}(\bm{u},\bm{v}) + J_h(\bm{u},\bm{v}),
\end{align*}
with
 \begin{align*}
 C_{h}(\bm{u},\bm{v})&=-a^{i}_c(\bm{u},\bm{v})-a^{i}_c(\bm{v},\bm{u})-a^{\partial}_c(\bm{u},\bm{v})-a^{\partial}_c(\bm{v},\bm{u}),\\
 J_h(\bm{u},\bm{v})&=a^{i}_p(\bm{u},\bm{v})+a^{\partial}_p(\bm{v},\bm{u}).
\end{align*}
\begin{lemma}\label{aplem11}
Let $\bm{u}_h\in \bm{V}_h$ and $\bm{v}_h^c\in \bm{V}_h^c$, then it holds that
\begin{align*}
C_{h}(\bm{u}_h,\bm{v}_h^c)\lesssim \gamma^{-1/2}\Big(\sum_{K\in\mathcal{T}_h}\eta_{J_K}^2\Big)^{1/2}\vertiii{\bm{v}}.
\end{align*}
\end{lemma}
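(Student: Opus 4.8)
The plan is to exploit that $\bm{v}_h^c\in\bm{V}_h^c\subset\bm{H}^1_0(\Omega)$ is globally continuous and vanishes on $\partial\Omega$, so that $[\![\bm{v}_h^c\otimes\bm{n}]\!]=\bm{0}$ on every interior edge and $\bm{v}_h^c\otimes\bm{n}=\bm{0}$ on every boundary edge. Inspecting the definitions of $a^{i}_c$ and $a^{\partial}_c$, this makes the two terms $a^{i}_c(\bm{u}_h,\bm{v}_h^c)$ and $a^{\partial}_c(\bm{u}_h,\bm{v}_h^c)$ vanish identically, so that $C_{h}(\bm{u}_h,\bm{v}_h^c)=-a^{i}_c(\bm{v}_h^c,\bm{u}_h)-a^{\partial}_c(\bm{v}_h^c,\bm{u}_h)$; that is, only the two terms carrying the jump $[\![\bm{u}_h\otimes\bm{n}]\!]$ survive. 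The same continuity and boundary vanishing also gives $a^{i}_p(\bm{v}_h^c,\bm{v}_h^c)=a^{\partial}_p(\bm{v}_h^c,\bm{v}_h^c)=0$, hence $\vertiii{\bm{v}_h^c}^2=\nu\|\nabla_h\bm{v}_h^c\|_0^2$, which I will use at the very end to convert the broken-gradient norm back into the triple norm.

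Next I would bound $a^{i}_c(\bm{v}_h^c,\bm{u}_h)$ edge by edge with the Cauchy--Schwarz inequality, obtaining $\tfrac{\nu}{2}\sum_{E\in\mathcal{E}^i_h}\|[\![\nabla\bm{v}_h^c]\!]\|_{0,E}\,\|[\![\bm{u}_h\otimes\bm{n}]\!]\|_{0,E}$, and then apply a discrete (inverse) trace inequality on each cell adjacent to $E$ --- legitimate because $\bm{v}_h^c|_K\in RT_k(K)$ is a polynomial --- to get $\|[\![\nabla\bm{v}_h^c]\!]\|_{0,E}\lesssim h_E^{-1/2}(\|\nabla\bm{v}_h^c\|_{0,K_+}+\|\nabla\bm{v}_h^c\|_{0,K_-})$, where $K_\pm$ share $E$. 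Writing $h_E^{-1/2}=\gamma^{-1/2}(\gamma h_E^{-1})^{1/2}=\gamma^{-1/2}\gamma_h^{1/2}$, pulling the factor $\gamma^{-1/2}$ out front, and applying a discrete Cauchy--Schwarz over the edges, I am left with the product of $\big(\sum_{K\in\mathcal{T}_h}\|\nabla\bm{v}_h^c\|_{0,K}^2\big)^{1/2}$ (up to the finite-overlap constant of the mesh) and $\big(\sum_{E\in\mathcal{E}^i_h}\gamma_h\|[\![\bm{u}_h\otimes\bm{n}]\!]\|_{0,E}^2\big)^{1/2}$. Inserting suitable powers of $\nu$, the first factor is $\nu^{-1/2}\vertiii{\bm{v}_h^c}$ by the previous paragraph, while $\nu\sum_{E\in\mathcal{E}^i_h}\gamma_h\|[\![\bm{u}_h\otimes\bm{n}]\!]\|_{0,E}^2\lesssim\sum_{K\in\mathcal{T}_h}\eta_{J_K}^2$ since each interior edge lies in the boundary of a bounded number of cells; the leading $\nu$ cancels the two $\nu^{-1/2}$, leaving exactly $\gamma^{-1/2}\big(\sum_{K\in\mathcal{T}_h}\eta_{J_K}^2\big)^{1/2}\vertiii{\bm{v}_h^c}$.

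The boundary contribution $a^{\partial}_c(\bm{v}_h^c,\bm{u}_h)$ is treated in exactly the same way, now using that on $E\in\mathcal{E}^{\partial}_h$ one has $\|\bm{u}_h\otimes\bm{n}\|_{0,E}=\|[\![\bm{u}_h\otimes\bm{n}]\!]\|_{0,E}$, so these terms are absorbed into the boundary part of $\sum_{K\in\mathcal{T}_h}\eta_{J_K}^2$. Adding the interior and boundary estimates yields the claim, with $\vertiii{\bm{v}}$ in the statement understood as $\vertiii{\bm{v}_h^c}$.

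The only genuinely delicate point I expect is the combination of the discrete trace inequality for $[\![\nabla\bm{v}_h^c]\!]$ with the bookkeeping of the $\gamma$- and $\nu$-powers: one must verify that the suppressed constant is independent of $\gamma$, of $\nu$ and of the local mesh size, which is precisely the robustness the estimator is meant to have. On one-irregular meshes one also has to note that the restriction of at most one hanging node per edge keeps the trace and finite-overlap constants uniform, but this affects only the generic constant and not the structure of the argument.
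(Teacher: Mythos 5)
Your proposal is correct and follows essentially the same route as the paper: since $\bm{v}_h^c$ is conforming and vanishes on $\partial\Omega$, only the terms $-a^{i}_c(\bm{v}_h^c,\bm{u}_h)-a^{\partial}_c(\bm{v}_h^c,\bm{u}_h)$ survive, and these are bounded by Cauchy--Schwarz with the weights $\gamma_h^{\pm1/2}$, a trace--plus--inverse inequality for the polynomial gradient giving the factor $\gamma^{-1/2}$, and identification of $\nu^{1/2}\|\nabla\bm{v}_h^c\|_0$ with the triple norm of the conforming interpolant. Your reading of $\vertiii{\bm{v}}$ as $\vertiii{\bm{v}_h^c}$ matches how the paper's own proof concludes (and how the lemma is later applied via the Scott--Zhang stability).
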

\begin{proof}
Since $\bm{v}_h^c\in \bm{V}_h^c$, we have
\begin{align*}
C_{h}(\bm{u}_h,\bm{v}_h^c)=-a^{i}_c(\bm{v}_h^c,\bm{u}_h)-a^{\partial}_c(\bm{v}_h^c,\bm{u}_h).
\end{align*} 
Applying Cauchy-Schwarz inequality, implies 
\begin{align*}
C_{h}(\bm{u}_h,\bm{v}_h^c)\lesssim \Big(\nu\sum_{E\in\mathcal{E}_h} \gamma_h^{-1} \|[\![\nabla\bm{v}_h^c]\!]\|_{0,E}^2\Big)^{1/2}
\,\Big(\nu\sum_{E\in\mathcal{E}_h} \gamma_h \|[\![\bm{u}_h\otimes\bm{n}]\!]\|_{0,E}^2\Big)^{1/2}.
\end{align*} 
Using a trace estimate together with a discrete inverse inequality leads for an edge $E\in\mathcal{E}_h$, with $E=K_1\cap K_2$ if $E\in\mathcal{E}_h^i$ and
$E = K_1$, $K_2=\emptyset$ if $E\subset\partial\Omega$, to
\begin{align*}
\|\nabla\bm{v}_h^c\|_{0,E}\lesssim h_{K}^{-1/2}\|\nabla\bm{v}_h^c\|_{0,K_1\cup K_2}.
\end{align*}
Thus we have
\begin{align*}
C_{h}(\bm{u}_h,\bm{v}_h^c)\lesssim\gamma^{-1/2} \Big(\nu\sum_{K\in\mathcal{T}_h}\|\nabla\bm{v}_h^c\|^2_{0,K}\Big)^{1/2}\,\Big(\sum_{K\in\mathcal{T}_h}\eta_{J_K}^2\Big)^{1/2}. 
\end{align*}
\end{proof}
\par
Let $\bm{\Pi}_h : \bm{H}^1_0 \to \bm{V}_h^c$ denote the Scott-Zhang interpolation operator \cite{SZ}, which is stable
$\|\nabla(\bm{\Pi}_h\bm{v})\|_0\lesssim\|\nabla\bm{v}\|_0$
and satisfies the following
interpolation property  
 \begin{align}\label{approxlem12}
\sum_{K\in\mathcal{T}_h} h_{K}^{-2}\|\bm{v}-\bm{\Pi}_h\bm{v}\|^{2}_{0,K} + 
\sum_{E\in\mathcal{E}_h} h_{E}^{-1}\|\bm{v}-\bm{\Pi}_h\bm{v}\|^{2}_{0,E}&\lesssim \|\nabla\bm{v}\|_0^2,
 \end{align}
 for any $\bm{v}\in\bm{H}^{1}_0(\Omega)$.
\begin{lemma}\label{reallem123}
Let $\bm{v}_h^c=\bm{\Pi}_h\bm{v}\in\bm{V}_h^c$ be the Scott-Zhang interpolation of $\bm{v}\in \bm{H}^1_0(\Omega)$, then for any $\bm{u}_h\in \bm{V}_h$, $p_h\in Q_h$, and $\lambda_h\in\mathbb{R}_+$, it holds that
\begin{align}
\lambda_h(\bm{u}_h,\bm{v}-\bm{v}_h^c)-\nu(\nabla_h\bm{u}_h,\nabla(\bm{v}-\bm{v}_h^c))+(p_h,\nabla\cdot(\bm{v}-\bm{v}_h^c))
\lesssim \eta_h\vertiii{\bm{v}}.
\end{align}
\end{lemma}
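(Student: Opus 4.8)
The plan is to integrate by parts elementwise and recognize the standard residual terms. First I would rewrite the volume term $-\nu(\nabla_h\bm{u}_h,\nabla(\bm{v}-\bm{v}_h^c))+(p_h,\nabla\cdot(\bm{v}-\bm{v}_h^c))$ by integrating by parts on each $K\in\mathcal{T}_h$. Since $\bm{v}-\bm{v}_h^c\in\bm{H}^1_0(\Omega)$, its trace is single-valued across interior edges and vanishes on $\partial\Omega$; so the boundary contributions collapse to the interior-edge jump terms $[\![(p_h\bm{I}-\nu\nabla\bm{u}_h)\bm{n}]\!]$ tested against $\bm{v}-\bm{v}_h^c$, while the volume part becomes $\int_K(\nu\Delta\bm{u}_h-\nabla p_h)\cdot(\bm{v}-\bm{v}_h^c)$. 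Combining this with the mass term $\lambda_h(\bm{u}_h,\bm{v}-\bm{v}_h^c)$ gives exactly
\begin{align*}
\sum_{K\in\mathcal{T}_h}\int_K(\lambda_h\bm{u}_h+\nu\Delta\bm{u}_h-\nabla p_h)\cdot(\bm{v}-\bm{v}_h^c)\,dx
-\sum_{E\in\mathcal{E}^i_h}\int_E[\![(p_h\bm{I}-\nu\nabla\bm{u}_h)\bm{n}]\!]\cdot(\bm{v}-\bm{v}_h^c)\,d\bm{s},
\end{align*}
which is the familiar residual representation.

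Next I would bound the two sums by Cauchy--Schwarz edge-by-edge and cell-by-cell, pulling out the mesh weights $h_K$ and $h_E$ to match the scaling in the definitions of $\eta_{R_K}$ and $\eta_{E_K}$. Explicitly, the volume sum is controlled by $\big(\sum_K h_K^2\|\lambda_h\bm{u}_h+\nu\Delta\bm{u}_h-\nabla p_h\|_{0,K}^2\big)^{1/2}\big(\sum_K h_K^{-2}\|\bm{v}-\bm{v}_h^c\|_{0,K}^2\big)^{1/2}$, and the edge sum similarly with weights $h_E$ and $h_E^{-1}$. Inserting the $\nu^{\pm1/2}$ factors so the first factors become $\big(\sum_K\nu^{-1}h_K^2\|\cdots\|^2\big)^{1/2}=\big(\sum_K\eta_{R_K}^2\big)^{1/2}$ and $\big(\sum_K\eta_{E_K}^2\big)^{1/2}$, the second factors carry a compensating $\nu^{1/2}$ and are bounded via the Scott--Zhang interpolation estimate \eqref{approxlem12} by $\nu^{1/2}\|\nabla\bm{v}\|_0\lesssim\vertiii{\bm{v}}$. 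Summing the two contributions and using that $\eta_{R_K}^2+\eta_{E_K}^2\le\eta_K^2$ yields the claimed bound $\eta_h\vertiii{\bm{v}}$.

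The one point needing care is the appearance of $\gamma_h=\gamma/h_E$ versus plain $h_E$ in the edge weight of $\eta_{E_K}$ compared to $\eta_{J_K}$, and making sure the $\nu$-powers are distributed so that everything is measured against $\vertiii{\bm{v}}$ rather than an unweighted $H^1$ norm; this is exactly why the $\nu^{-1}$ appears in $\eta_{R_K}$ and $\eta_{E_K}$ and why the factor $\nu^{1/2}$ from $\vertiii{\bm{v}}\gtrsim\nu^{1/2}\|\nabla\bm{v}\|_0$ is available to absorb it. I expect the main (mild) obstacle to be the elementwise integration by parts bookkeeping on one-irregular meshes with hanging nodes: one must check that a hanging-node edge, when split into the two half-edges seen from the finer side, still contributes only the jump term against the continuous function $\bm{v}-\bm{v}_h^c$, which it does since $\bm{v}-\bm{v}_h^c$ is globally $H^1$ and hence has a well-defined single-valued trace on each physical edge. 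Everything else is routine Cauchy--Schwarz plus the stated interpolation estimate.
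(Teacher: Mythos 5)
Your proposal is correct and follows essentially the same route as the paper: elementwise integration by parts to obtain the volume residual and interior-edge jump terms (the boundary edges dropping out since $\bm{v}-\bm{v}_h^c\in\bm{H}^1_0(\Omega)$), followed by weighted Cauchy--Schwarz with the $\nu^{\pm1/2}$ and $h_K$, $h_E$ scalings and the Scott--Zhang estimate \eqref{approxlem12} to produce $\big(\sum_K\eta_{R_K}^2\big)^{1/2}\vertiii{\bm{v}}$ and $\big(\sum_K\eta_{E_K}^2\big)^{1/2}\vertiii{\bm{v}}$. The sign you place in front of the edge-jump sum differs from the paper's convention, but this is immaterial since both terms are estimated in absolute value.
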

\begin{proof}
Using integration by parts on each element $K\in\mathcal{T}_h$, we have
\begin{align*}
&\lambda_h(\bm{u}_h,\bm{v}-\bm{v}_h^c)-\nu(\nabla_h\bm{u}_h,\nabla(\bm{v}-\bm{v}_h^c))+(p_h,\nabla\cdot(\bm{v}-\bm{v}_h^c))\\
&\qquad=\sum_{K\in\mathcal{T}_h}\int_{K}(\lambda_h\bm{u}_h+\nu\Delta \bm{u}_h-\nabla p_h)(\bm{v}-\bm{v}_h^c)d\bm{x}\\
&\qquad\qquad
+\sum_{K\in\mathcal{T}_h}\int_{\partial K}(p_h\bm{I} - \nu\nabla{\bm{u}}_h)\bm{n}_K\cdot(\bm{v}-\bm{v}_h^c)d\bm{s}\\
&\qquad=T_1+T_2.
\end{align*}
Cauchy-Schwarz inequality and \eqref{approxlem12}, lead to
\begin{align*}
T_1&\lesssim\Big(\nu^{-1}\sum_{K\in\mathcal{T}_h}h_K^2\|\lambda_h\bm{u}_h+\nu\Delta \bm{u}_h-\nabla p_h\|_{0,K}^2\Big)^{1/2}
\Big(\nu\sum_{K\in\mathcal{T}_h}h_{K}^{-2}\|\bm{v}-\bm{v}_h^c\|^{2}_{0,K}\Big)^{1/2}\\
&\lesssim\Big(\sum_{K\in\mathcal{T}_h}\eta_{R_K}^2\Big)^{1/2}\vertiii{\bm{v}}.
\end{align*}
Since $(\bm{v}-\bm{v}_h^c)|_{\partial\Omega}=0$ we can rewrite $T_2$ in terms of a sum over interior edges
\begin{align*}
T_2= \sum_{E\in\mathcal{E}^i_h}\int_E[\![(p_h\bm{I}-\nu\nabla\bm{u}_h)\bm{n}]\!](\bm{v}-\bm{v}_h^c)d\bm{s}.
\end{align*}
Again, applying Cauchy-Schwarz inequality and \eqref{approxlem12}, imply
\begin{align*}
T_2&\lesssim\Big(\nu^{-1}\sum_{E\in\mathcal{E}^i_h}h_{E}\|[\![(p_h\bm{I}-\nu\nabla\bm{u}_h)\bm{n}]\!]\|^{2}_{0,E}\Big)^{1/2}\Big(\nu\sum_{E\in\mathcal{E}^i_h}
h_{E}^{-1}\|\bm{v}-\bm{v}_h^c\|^{2}_{0,E}\Big)^{1/2}\\
&\lesssim \Big(\sum_{K\in\mathcal{T}_h}\eta_{E_K}^2\Big)^{1/2}\vertiii{\bm{v}}.
\end{align*}
Combining the above estimates, proves the desired result. 
\end{proof}

\begin{lemma}\label{lemma5}
Let $(\bm{u},p,\lambda)\in \bm{H}^1_0(\Omega)\times L^2_0(\Omega) \times \mathbb{R}_+$ solve \eqref{stpre11}
and $(\bm{u}_h,p_h,\lambda_h)\in \bm{V}_h\times Q_h \times \mathbb{R}_+$ solve \eqref{hdiv11}, then we have the following upper bound
for the conforming velocity and pressure errors
\begin{align*}
\vertiii{\bm{u}-\bm{u}_h^c}+\nu^{-1/2}\|p-p_h\|_{0}\lesssim \eta_h + \nu^{-1/2}\left(|\lambda-\lambda_h|+\lambda\|\bm{u}-\bm{u}_h\|_0\right).
\end{align*}
\end{lemma}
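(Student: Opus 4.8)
The plan is to estimate the conforming error through the stability Lemma \ref{Sinsuplem12}. Set $\bm{e}:=\bm{u}-\bm{u}_h^c\in\bm{H}^1_0(\Omega)$ and $e_p:=p-p_h\in L^2_0(\Omega)$, and note that on arguments drawn from $\bm{H}^1_0(\Omega)\times L^2_0(\Omega)$ the form $\mathcal{A}_h$ reduces to the conforming form $\mathcal{A}$, since in $a_h$ every interior-face term carries a jump of $\bm{e}$ or of the test velocity and every Nitsche term a boundary value of the test velocity, all of which vanish. Applying Lemma \ref{Sinsuplem12} to $(\bm{e},e_p)$ yields $(\bm{v},q)\in\bm{H}^1_0(\Omega)\setminus\{0\}\times L^2_0(\Omega)$ with $\vertiii{(\bm{v},q)}\lesssim\vertiii{(\bm{e},e_p)}$ and $\mathcal{A}_h(\bm{e},e_p;\bm{v},q)\gtrsim\vertiii{(\bm{e},e_p)}^2$. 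Once $\mathcal{A}_h(\bm{e},e_p;\bm{v},q)$ is bounded by the right-hand side times $\vertiii{(\bm{v},q)}$, dividing by $\vertiii{(\bm{e},e_p)}$ and using that $\vertiii{(\bm{e},e_p)}^2=\vertiii{\bm{e}}^2+\nu^{-1}\|e_p\|_0^2$ dominates $\vertiii{\bm{e}}+\nu^{-1/2}\|e_p\|_0$ up to a constant finishes the proof.

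To bound $\mathcal{A}_h(\bm{e},e_p;\bm{v},q)$, I would first use $\mathcal{A}(\bm{u},p;\bm{v},q)=\lambda(\bm{u},\bm{v})$ from \eqref{stpre11}, the decomposition $\bm{u}_h=\bm{u}_h^c+\bm{u}_h^r$, and the fact that $\bm{u}_h$ is exactly divergence free (so $(q,\nabla\cdot\bm{u}_h)=0$ and $\nabla\cdot\bm{u}_h^c=-\nabla\cdot\bm{u}_h^r$), to rewrite
\begin{align*}
\mathcal{A}_h(\bm{e},e_p;\bm{v},q)
&=\bigl[\lambda(\bm{u},\bm{v})-\lambda_h(\bm{u}_h,\bm{v})\bigr]
+\bigl[\lambda_h(\bm{u}_h,\bm{v})-\nu(\nabla_h\bm{u}_h,\nabla\bm{v})+(p_h,\nabla\cdot\bm{v})\bigr]\\
&\quad+\bigl[\nu(\nabla_h\bm{u}_h^r,\nabla\bm{v})-(q,\nabla\cdot\bm{u}_h^r)\bigr]
=:T_A+T_B+T_C.
\end{align*}
For $T_A$ I would split $\lambda(\bm{u},\bm{v})-\lambda_h(\bm{u}_h,\bm{v})=(\lambda-\lambda_h)(\bm{u},\bm{v})+\lambda_h(\bm{u}-\bm{u}_h,\bm{v})$ and use Cauchy--Schwarz, the Poincar\'e inequality $\|\bm{v}\|_0\lesssim\|\nabla\bm{v}\|_0=\nu^{-1/2}\vertiii{\bm{v}}$, $\|\bm{u}\|_0=1$, and $\lambda_h\lesssim\lambda$ (valid for $h$ small by \eqref{eigenrr11}), to get $|T_A|\lesssim\nu^{-1/2}(|\lambda-\lambda_h|+\lambda\|\bm{u}-\bm{u}_h\|_0)\vertiii{\bm{v}}$. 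For $T_C$, since $\bm{u}_h^r\in\bm{H}^{\mathrm{div}}_0(\Omega)$ has $L^2$-divergence bounded by its broken gradient and $\nu\|\nabla_h\bm{u}_h^r\|_0^2\le\vertiii{\bm{u}_h^r}^2$, Cauchy--Schwarz (pairing $\nu^{1/2}$ with $\nabla_h\bm{u}_h^r$ and $\nu^{-1/2}$ with $q$) together with \eqref{realilem1} gives $|T_C|\lesssim\vertiii{\bm{u}_h^r}\vertiii{(\bm{v},q)}\lesssim\eta_h\vertiii{(\bm{v},q)}$.

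The core is $T_B$. Here I would insert the Scott--Zhang interpolant $\bm{v}_h^c:=\bm{\Pi}_h\bm{v}\in\bm{V}_h^c\subset\bm{V}_h$ and split $T_B$ into its $\bm{v}-\bm{v}_h^c$ part and its $\bm{v}_h^c$ part. The first part is exactly the quantity estimated in Lemma \ref{reallem123}, hence $\lesssim\eta_h\vertiii{\bm{v}}$. For the second part, testing the discrete equation \eqref{hdiv11} with $(\bm{v}_h^c,0)$ gives $a_h(\bm{u}_h,\bm{v}_h^c)-(p_h,\nabla\cdot\bm{v}_h^c)=\lambda_h(\bm{u}_h,\bm{v}_h^c)$, so the $\bm{v}_h^c$ part equals $a_h(\bm{u}_h,\bm{v}_h^c)-\nu(\nabla_h\bm{u}_h,\nabla\bm{v}_h^c)=C_h(\bm{u}_h,\bm{v}_h^c)+J_h(\bm{u}_h,\bm{v}_h^c)$, using $\nabla_h\bm{v}_h^c=\nabla\bm{v}_h^c$; and $J_h(\bm{u}_h,\bm{v}_h^c)=0$ because the jumps and boundary values of $\bm{v}_h^c$ vanish. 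Thus it reduces to $C_h(\bm{u}_h,\bm{v}_h^c)$, which Lemma \ref{aplem11} bounds by $\gamma^{-1/2}(\sum_K\eta_{J_K}^2)^{1/2}\vertiii{\bm{v}_h^c}\lesssim\eta_h\vertiii{\bm{v}}$ via the stability $\vertiii{\bm{\Pi}_h\bm{v}}\lesssim\vertiii{\bm{v}}$. Collecting $T_A,T_B,T_C$, bounding $\vertiii{\bm{v}}\le\vertiii{(\bm{v},q)}\lesssim\vertiii{(\bm{e},e_p)}$, and dividing yields the claim. The step I expect to be most delicate is keeping every bound $\nu$-robust --- pairing the correct $\nu^{\pm1/2}$ weights in $T_A$ and $T_C$ so that the eigenvalue-error contributions emerge with exactly the factor $\nu^{-1/2}$ --- together with the algebraic bookkeeping that lets one evaluate $\mathcal{A}_h$ consistently on the non-conforming pair $(\bm{e},e_p)$ and cleanly identify the discrete residual against a conforming test function with the auxiliary form $C_h$.
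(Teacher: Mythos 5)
Your proposal is correct and follows essentially the same route as the paper's proof: apply Lemma \ref{Sinsuplem12} to $(\bm{u}-\bm{u}_h^c,p-p_h)$, reduce $\mathcal{A}_h$ to the conforming form, insert the continuous eigenvalue equation and the divergence-free property, and then estimate the resulting terms with Lemma \ref{reallem123}, Lemma \ref{aplem11} (via the discrete equation tested with $(\bm{\Pi}_h\bm{v},0)$), the bound \eqref{realilem1} for $\bm{u}_h^r$, and Cauchy--Schwarz plus Poincar\'e for the eigenvalue-residual term. Your grouping into $T_A,T_B,T_C$ and the use of $\lambda_h\lesssim\lambda$ are only cosmetic differences from the paper's $T_1,\dots,T_4$ splitting.
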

\begin{proof}
Using Lemma \ref{Sinsuplem12}, there exists a pair $(\bm{v},q)\in \bm{H}^{1}_0(\Omega)\setminus\{0\}\times L^2_0(\Omega)$ such that
\begin{align*}
\vertiii{(\bm{u}-\bm{u}_h^c,p-p_h)}^2\lesssim {\mathcal{A}}_h(\bm{u}-\bm{u}_h^c,p-p_h;\bm{v},q),
\end{align*}
and
\begin{align*}
\vertiii{(\bm{v},q)}\lesssim \vertiii{(\bm{u}-\bm{u}_h^c,p-p_h)}.
\end{align*}
Since $\bm{u},\bm{u}_h^c,\bm{v}\in \bm{H}^1_0(\Omega)$, we have
\begin{align*}
{\mathcal{A}}_h(\bm{u}-\bm{u}_h^c,p-p_h;\bm{v},q)=\nu(\nabla(\bm{u}-\bm{u}_h^c),\nabla\bm{v})
-(p-p_h,\nabla\cdot\bm{v})-(q,\nabla\cdot(\bm{u}-\bm{u}_h^c)).
\end{align*}
From \eqref{stpre11}, we obtain
\begin{align*}
{\mathcal{A}}_h(\bm{u}-\bm{u}_h^c,p-p_h;\bm{v},q)=\lambda(\bm{u},\bm{v})
-\nu(\nabla\bm{u}_h^c,\nabla\bm{v})
+(p_h,\nabla\cdot\bm{v})+(q,\nabla\cdot\bm{u}_h^c).
\end{align*}
Applying the fact  $(q,\nabla\cdot\bm{u}_h)=0$, implies
\begin{align*}
{\mathcal{A}}_h(\bm{u}-\bm{u}_h^c,p-p_h;\bm{v},q)&=\lambda(\bm{u},\bm{v})
-\nu(\nabla\bm{u}_h^c,\nabla\bm{v})
+(p_h,\nabla\cdot\bm{v})-(q,\nabla\cdot\bm{u}_h^r)\\
&= \lambda_h(\bm{u}_h,\bm{v})+(\lambda\bm{u}-\lambda_h\bm{u}_h,\bm{v})
-\nu(\nabla_h\bm{u}_h,\nabla\bm{v})\\
&\quad+\nu(\nabla_h\bm{u}_h^r,\nabla\bm{v})
+(p_h,\nabla\cdot\bm{v})-(q,\nabla\cdot\bm{u}_h^r).
\end{align*}
Let $\bm{v}_h^c=\bm{\Pi}_h\bm{v}\in\bm{V}_h^c$ be the Scott-Zhang interpolation of $\bm{v}$.
Using 
\[0=\lambda_h(\bm{u}_h,\bm{v}_h^c)-\nu(\nabla_h\bm{u}_h,\nabla\bm{v}_h^c)-C_h(\bm{u}_h,\bm{v}_h^c)+(p_h,\nabla\cdot\bm{v}_h^c),\] yields
\begin{align*}
{\mathcal{A}}_h(\bm{u}-\bm{u}_h^c,p-p_h;\bm{v},q)&=T_1+T_2+T_3+T_4,
\end{align*}
where
\begin{align*}
T_1&=\lambda_h(\bm{u}_h,\bm{v}-\bm{v}_h^c)-\nu(\nabla_h\bm{u}_h,\nabla(\bm{v}-\bm{v}_h^c))+(p_h,\nabla\cdot(\bm{v}-\bm{v}_h^c)),\\
T_2&= \nu(\nabla_h\bm{u}_h^r,\nabla\bm{v})-(q,\nabla\cdot\bm{u}_h^r),\quad
T_3=C_h(\bm{u}_h,\bm{v}_h^c),\quad
T_4=(\lambda\bm{u}-\lambda_h\bm{u}_h,\bm{v}).
\end{align*}
Using Lemma \ref{reallem123}, we have
\begin{align*}
T_1&\lesssim \eta_h\vertiii{\bm{v}}.
\end{align*}
Cauchy-Schwarz inequality and \eqref{realilem1} show
\begin{align*}
T_2&\lesssim\vertiii{\bm{u}_h^r}\vertiii{(\bm{v},q)}\lesssim\eta_h\vertiii{(\bm{v},q)}.
\end{align*}
Using Lemma \ref{aplem11} for the bound of $T_3$, we have
\begin{align*}
T_3&\lesssim\gamma^{-1/2}\eta_h\vertiii{\bm{v}}.
\end{align*}  
Cauchy-Schwarz and Poincare inequality lead to
\begin{align*}
T_4&\lesssim \nu^{-1/2}\|\lambda\bm{u}-\lambda_h\bm{u}_h\|_{0} \vertiii{\bm{v}}
\lesssim  \nu^{-1/2}\left( |\lambda-\lambda_h| + \lambda\|\bm{u}-\bm{u}_h\|_0\right)\vertiii{\bm{v}}.
\end{align*} 
Combining the above with the estimate $\vertiii{(\bm{v},q)}\lesssim\vertiii{(\bm{u}-\bm{u}_h^c,p-p_h)}$ yields the desired result. 
\end{proof}

\begin{theorem}\label{realiab}
 Let $(\bm{u},p,\lambda)\in \bm{H}^1_0(\Omega)\times L^2_0(\Omega) \times \mathbb{R}_+$ be the solution of the Stokes eigenvalue problem \eqref{stpre11}  and $(\bm{u}_h,p_h,\lambda_h)\in\bm{V}_h\times Q_h$ the $H^{\textrm{div}}$-DG approximation obtained by \eqref{hdiv11}. Let $\eta_h$ be the a posteriori error estimator in \eqref{errest1}. Then we obtain the following a posteriori error bound
 \begin{align*}
 \vertiii{\bm{u}-\bm{u}_h}+\nu^{-1/2}\|p-p_h\|_{0}\lesssim \eta_h +  \nu^{-1/2}\left( |\lambda-\lambda_h| + \lambda\|\bm{u}-\bm{u}_h\|_0\right),
\end{align*}   
where the hidden constant is independent of the viscosity $\nu$ and the sufficiently large penalty parameter $\gamma\geq 1$.
\end{theorem}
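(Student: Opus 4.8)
The plan is to combine the three lemmas just proved with the triangle-inequality decomposition of the velocity error. First I would write
\[
\vertiii{\bm{u}-\bm{u}_h}+\nu^{-1/2}\|p-p_h\|_{0}
\le \vertiii{\bm{u}-\bm{u}_h^c}+\vertiii{\bm{u}_h^r}+\nu^{-1/2}\|p-p_h\|_{0},
\]
using $\bm{u}_h=\bm{u}_h^c+\bm{u}_h^r$ from the $\widetilde{\bm{V}}_h={\bm{V}}_h^c\oplus{\bm{V}}_h^{\perp}$ decomposition and the triangle inequality. The middle term $\vertiii{\bm{u}_h^r}$ is bounded directly by $\bigl(\sum_{K}\eta_{J_K}^2\bigr)^{1/2}\le\eta_h$ via \eqref{realilem1}. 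The remaining two terms are exactly $\vertiii{\bm{u}-\bm{u}_h^c}+\nu^{-1/2}\|p-p_h\|_{0}$, which Lemma \ref{lemma5} bounds by $\eta_h+\nu^{-1/2}\bigl(|\lambda-\lambda_h|+\lambda\|\bm{u}-\bm{u}_h\|_0\bigr)$. Adding these gives the claim, with the implicit constant coming from the constants in \eqref{realilem1} and Lemma \ref{lemma5}.

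The one point that needs a sentence of care is the $\gamma$-dependence. In Lemma \ref{aplem11} the term $C_h(\bm{u}_h,\bm{v}_h^c)$ carries a favourable factor $\gamma^{-1/2}$, and in Lemma \ref{lemma5} this feeds into $T_3\lesssim\gamma^{-1/2}\eta_h\vertiii{\bm{v}}$; since we assume $\gamma\ge 1$, the factor $\gamma^{-1/2}\le 1$ is harmless and can simply be absorbed, so the final constant does not deteriorate as $\gamma$ grows. Likewise, every estimate entering Lemmas \ref{Sinsuplem12}, \ref{aplem11}, \ref{reallem123} and \ref{lemma5} — the inf-sup stability, the trace/inverse inequalities, the Scott-Zhang bounds \eqref{approxlem12}, and the Cauchy-Schwarz/Young steps — has a constant depending only on the shape-regularity of $\mathcal{T}_h$ and on $\Omega$, and the norm $\vertiii{\cdot}$ has been scaled so that all $\nu$-weights are carried explicitly inside the estimators; hence the hidden constant is independent of $\nu$ and of the local mesh sizes.

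I do not anticipate a genuine obstacle here: the theorem is essentially a bookkeeping corollary of Lemma \ref{lemma5} together with \eqref{realilem1}. The only thing to be vigilant about is not to lose track of which terms are already "$\eta_h$" and which are the eigenvalue-consistency remainder $\nu^{-1/2}(|\lambda-\lambda_h|+\lambda\|\bm{u}-\bm{u}_h\|_0)$, and to state clearly that this remainder is of higher order: by \eqref{eigenrr11} and \eqref{eigerr1} one has $|\lambda-\lambda_h|\lesssim h^{2(s-1)}$ and $\|\bm{u}-\bm{u}_h\|_0\lesssim h^{s-1+\alpha}$, both of higher order than the expected $O(h^{s-1})$ behaviour of $\eta_h$, so the estimator is asymptotically the dominant quantity. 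That concluding remark is worth including right after the proof, but it is not needed for the inequality itself.

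\begin{proof}
Using the decomposition $\bm{u}_h=\bm{u}_h^c+\bm{u}_h^r$ with $\bm{u}_h^c\in{\bm{V}}_h^c$ and $\bm{u}_h^r\in{\bm{V}}_h^{\perp}$, the triangle inequality gives
\begin{align*}
\vertiii{\bm{u}-\bm{u}_h}+\nu^{-1/2}\|p-p_h\|_{0}
\le \bigl(\vertiii{\bm{u}-\bm{u}_h^c}+\nu^{-1/2}\|p-p_h\|_{0}\bigr)+\vertiii{\bm{u}_h^r}.
\end{align*}
By \eqref{realilem1} the last term satisfies $\vertiii{\bm{u}_h^r}\lesssim\bigl(\sum_{K\in\mathcal{T}_h}\eta_{J_K}^2\bigr)^{1/2}\le\eta_h$, while Lemma \ref{lemma5} bounds the bracketed term by $\eta_h+\nu^{-1/2}\bigl(|\lambda-\lambda_h|+\lambda\|\bm{u}-\bm{u}_h\|_0\bigr)$. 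Adding the two estimates proves the asserted bound. The constants entering \eqref{realilem1} and Lemma \ref{lemma5} depend only on $\Omega$ and on the shape-regularity of $\mathcal{T}_h$; in particular the term $T_3$ in the proof of Lemma \ref{lemma5} carries a factor $\gamma^{-1/2}\le 1$ for $\gamma\ge 1$, so the hidden constant does not grow with $\gamma$, and since all $\nu$-weights appear explicitly in $\vertiii{\cdot}$ and in the estimators, it is also independent of $\nu$ and of the local mesh sizes.
\end{proof}

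Note that the eigenvalue-consistency remainder $\nu^{-1/2}(|\lambda-\lambda_h|+\lambda\|\bm{u}-\bm{u}_h\|_0)$ is of higher order: by \eqref{eigenrr11} and \eqref{eigerr1} it is $O(h^{2(s-1)})+O(h^{s-1+\alpha})$, whereas $\eta_h$ is generically of order $h^{s-1}$, so the estimator $\eta_h$ asymptotically dominates the right-hand side.
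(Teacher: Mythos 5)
Your proof is correct and follows essentially the same route as the paper: the paper's proof of this theorem is exactly the combination of Lemma \ref{lemma5} with the bound \eqref{realilem1} for $\vertiii{\bm{u}_h^r}$ via the triangle inequality. Your additional remarks on the $\gamma^{-1/2}$ factor and the higher-order nature of the eigenvalue-consistency remainder are consistent with the paper's discussion but not needed for the stated inequality.
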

\begin{proof}
  The proof follows directly from a combination of Lemma~\ref{lemma5} and \eqref{realilem1}. 
\end{proof}
 
 \begin{corollary}\label{cor11}
If $\bm{u}\in \bm{H}^2(\mathcal{T}_h)$ and $p\in H^{1}(\mathcal{T}_h)$,  then the eigenvalue error satisfies
 \begin{align*}
 |\lambda-\lambda_h|\lesssim \eta_h^2+\nu^{-1}|\lambda-\lambda_h|^2+ (\lambda+\nu^{-1}\lambda^2)\|\bm{u}-\bm{u}_h\|_0^2.
 \end{align*}
\end{corollary}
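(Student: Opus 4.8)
The plan is to start from the Rayleigh quotient identity in Theorem~\ref{riziden11}. Since $(\bm{u}_h,p_h,\lambda_h)$ solves \eqref{hdiv11} with $\|\bm{u}_h\|_0=1$, testing \eqref{hdiv11} with $(\bm{u}_h,p_h)$ gives $\mathcal{A}_h(\bm{u}_h,p_h;\bm{u}_h,p_h)=\lambda_h(\bm{u}_h,\bm{u}_h)=\lambda_h$, so the left-hand side of the identity is exactly $\lambda_h-\lambda$. Therefore
\begin{align*}
\lambda_h-\lambda = \mathcal{A}_h(\bm{u}-\bm{u}_h,p-p_h;\bm{u}-\bm{u}_h,p-p_h)-\lambda\|\bm{u}-\bm{u}_h\|_0^2,
\end{align*}
using $\|\bm{u}_h\|_0=1$. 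The second term on the right is already of the desired form (bounded by $\lambda\|\bm{u}-\bm{u}_h\|_0^2$), so the task reduces to bounding $|\mathcal{A}_h(\bm{u}-\bm{u}_h,p-p_h;\bm{u}-\bm{u}_h,p-p_h)|$.

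Next I would invoke boundedness of $\mathcal{A}_h(\cdot,\cdot;\cdot,\cdot)$ with respect to the mesh-dependent norm $\vertiii{(\cdot,\cdot)}$ — this follows from the continuity of $a_h$ on $\bm{V}_h$ stated after \eqref{sprob13} together with Cauchy–Schwarz on the pressure–divergence terms — to obtain
\begin{align*}
|\mathcal{A}_h(\bm{u}-\bm{u}_h,p-p_h;\bm{u}-\bm{u}_h,p-p_h)|\lesssim \vertiii{(\bm{u}-\bm{u}_h,p-p_h)}^2 = \left(\vertiii{\bm{u}-\bm{u}_h}+\nu^{-1/2}\|p-p_h\|_0\right)^2
\end{align*}
up to the equivalence of $\vertiii{(\cdot,\cdot)}^2$ and $(\vertiii{\cdot}+\nu^{-1/2}\|\cdot\|_0)^2$. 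Then I would apply the reliability bound of Theorem~\ref{realiab}, namely $\vertiii{\bm{u}-\bm{u}_h}+\nu^{-1/2}\|p-p_h\|_0\lesssim \eta_h+\nu^{-1/2}(|\lambda-\lambda_h|+\lambda\|\bm{u}-\bm{u}_h\|_0)$, and square it. Using $(a+b+c)^2\lesssim a^2+b^2+c^2$ gives
\begin{align*}
\vertiii{(\bm{u}-\bm{u}_h,p-p_h)}^2\lesssim \eta_h^2+\nu^{-1}|\lambda-\lambda_h|^2+\nu^{-1}\lambda^2\|\bm{u}-\bm{u}_h\|_0^2,
\end{align*}
which combined with the $\lambda\|\bm{u}-\bm{u}_h\|_0^2$ term from the Rayleigh identity yields exactly the claimed estimate. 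The regularity hypotheses $\bm{u}\in\bm{H}^2(\mathcal{T}_h)$, $p\in H^1(\mathcal{T}_h)$ are needed only to legitimize the use of Theorem~\ref{riziden11} (which in turn rests on the consistency Lemma~\ref{consis11}).

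The main obstacle I anticipate is purely bookkeeping rather than conceptual: one must be careful that the boundedness of $\mathcal{A}_h$ in the $\vertiii{\cdot}$-norm genuinely holds on the full space $\bm{H}^1_0(\Omega)+\bm{V}_h$ where $\bm{u}-\bm{u}_h$ lives — the DG form $a_h$ is only directly defined on $\bm{V}_h$, but $\bm{u}-\bm{u}_h$ need not lie in $\bm{V}_h$. This is resolved exactly as in the reliability section: since $\bm{u}\in\bm{H}^2(\mathcal{T}_h)$ the face terms in $a_h$ make sense, and one uses the splitting $a_h = \nu(\nabla_h\cdot,\nabla_h\cdot)+C_h+J_h$ (or a lifting operator as in \cite{BCGKDS,GKDS}) to control each piece by $\vertiii{\cdot}$; alternatively, one can absorb the residual part $\bm{u}_h^r$ as in Lemma~\ref{lemma5}. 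Apart from tracking these norm-equivalence constants and confirming they remain independent of $\nu$ and $\gamma\ge 1$, the argument is a short chain of triangle inequalities.
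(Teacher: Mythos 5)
Your overall route is the paper's route: start from the Rayleigh quotient identity of Theorem~\ref{riziden11} with $\|\bm{u}_h\|_0=1$ and $\mathcal{A}_h(\bm{u}_h,p_h;\bm{u}_h,p_h)=\lambda_h$, reduce the eigenvalue error to $\mathcal{A}_h(\bm{u}-\bm{u}_h,p-p_h;\bm{u}-\bm{u}_h,p-p_h)$ plus $\lambda\|\bm{u}-\bm{u}_h\|_0^2$, bound the first quantity by $\vertiii{(\bm{u}-\bm{u}_h,p-p_h)}^2$, and then square the reliability bound of Theorem~\ref{realiab}. The one step where your argument has a genuine gap is the claim that $|\mathcal{A}_h(\bm{u}-\bm{u}_h,p-p_h;\bm{u}-\bm{u}_h,p-p_h)|\lesssim\vertiii{(\bm{u}-\bm{u}_h,p-p_h)}^2$ follows from ``boundedness of $\mathcal{A}_h$'' in the DG norm on $\bm{H}^1_0(\Omega)+\bm{V}_h$. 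No such uniform continuity holds in $\vertiii{\cdot}$: after the divergence-free reduction the only nontrivial piece is the consistency term $C_h(\bm{u}-\bm{u}_h,\bm{u}-\bm{u}_h)$, which contains edge integrals of $[\![\nabla_h(\bm{u}-\bm{u}_h)]\!]$. For the discrete part $\bm{u}_h$ these traces are controlled via an inverse inequality (this is exactly what Lemma~\ref{aplem11} does, and why it needs \emph{both} arguments discrete), but for the continuous part $\bm{u}$ only a trace inequality is available, which produces terms of the type $\nu\,h_K^2\|\nabla^2\bm{u}\|_{0,K}^2$ that are controlled neither by $\vertiii{(\bm{u}-\bm{u}_h,p-p_h)}$ nor present in the claimed estimate. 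So your anticipated fix (``control each piece by $\vertiii{\cdot}$'', or ``as in the reliability section'') does not close this step; the reliability lemmas never bound $C_h$ with a non-polynomial argument.

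The paper handles precisely this point differently. It first uses $\nabla\cdot\bm{u}=\nabla\cdot\bm{u}_h=0$ to write $\mathcal{A}_h(\bm{u}-\bm{u}_h,p-p_h;\bm{u}-\bm{u}_h,p-p_h)=a_h(\bm{u}-\bm{u}_h,\bm{u}-\bm{u}_h)=\vertiii{\bm{u}-\bm{u}_h}^2+C_h(\bm{u}-\bm{u}_h,\bm{u}-\bm{u}_h)$, and then bounds $|C_h(\bm{u}-\bm{u}_h,\bm{u}-\bm{u}_h)|\le C_\ast\vertiii{(\bm{u}-\bm{u}_h,p-p_h)}^2$, where the uniform boundedness of $C_\ast$ is \emph{not} a norm-continuity statement but an order-comparison argument specific to the Galerkin error: by the a priori estimates of \cite[Section 8]{DSCSAT}, $|C_h(\bm{u}-\bm{u}_h,\bm{u}-\bm{u}_h)|$ converges at the same rate as $\vertiii{(\bm{u}-\bm{u}_h,p-p_h)}^2$, so their ratio stays bounded. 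If you want to avoid that asymptotic argument, the alternatives are to work with the lifted form of $a_h$ (as in \cite{BCGKDS,GKDS}) or to accept an additional higher-order term $\nu\sum_K h_K^2\|\nabla^2\bm{u}\|_{0,K}^2$ in the final bound; as stated, your generic continuity claim is not available and needs to be replaced by one of these arguments.
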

\begin{proof}
Note that since $\nabla\cdot \bm{u} = 0 = \nabla\cdot \bm{u}_h$, we have
\begin{align*}
\mathcal{A}_h(\bm{u}-\bm{u}_h,p-p_h;\bm{u}-\bm{u}_h,p-p_h)
&= a_h(\bm{u}-\bm{u}_h,\bm{u}-\bm{u}_h)\\
&= \vertiii{\bm{u}-\bm{u}_h}^2 + C_h(\bm{u}-\bm{u}_h,\bm{u}-\bm{u}_h).
\end{align*}
The consistency term can further be estimated as
\begin{align}
C_h(\bm{u}-\bm{u}_h,\bm{u}-\bm{u}_h) \le C_\ast |||(\bm{u}-\bm{u}_h,p-p_h)|||^2,\nonumber
 \end{align}
 where 
 \begin{align*}
 C_\ast= \frac{|C_h(\bm{u}-\bm{u}_h,\bm{u}-\bm{u}_h)|}{|||(\bm{u}-\bm{u}_h, p-p_h)|||^2}.
 \end{align*}
 From the estimates in \cite[Section 8]{DSCSAT}, we can conclude that $|C_h(\bm{u}-\bm{u}_h, \bm{u}-\bm{u}_h)|$
is of the same order as $|||(\bm{u}-\bm{u}_h, p-p_h)|||^2$. Hence $C_\ast$ can be bounded from above by a uniform constant.
  The assertion then follows from a combination of the above with Theorems \ref{riziden11} \& \ref{realiab}. 
\end{proof}
 
\subsection{Efficiency}
This section is devoted to prove an efficiency bound for $\eta$. To prove the results, we use the bubble function technique which was introduced in \cite{RV,RV1}. 
\par
Let $K$ be an element of $\mathcal{T}_h$.
We consider the standard element bubble function $b_K$ on $K$. Let $\bm{v}_h$ be any vector valued polynomial function on $K$,
then the following results hold from \cite{AMOJT,GKDS,RV},
\begin{align}
\begin{split}\label{efficie11}
\|\bm{v}_h\|_{0,K}&\lesssim \|b_K^{1/2}\bm{v}_h\|_{0,K},\\
\|b_K \bm{v}_h\|_{0,K}&\lesssim \|\bm{v}_h\|_{0,K},\\
\|\nabla(b_K\bm{v}_h)\|_{0,K}&\lesssim h_K^{-1}\|\bm{v}_h\|_{0,K}.
\end{split}
\end{align}

\begin{lemma}\label{efficiej11}
For $\bm{u}_h\in \bm{V}_h$, it holds that
\begin{align*}
\Big(\sum_{K\in\mathcal{T}_h}\eta_{J_K}^2\Big)^{1/2}\lesssim \vertiii{\bm{u}-\bm{u}_h}.
\end{align*}
\end{lemma}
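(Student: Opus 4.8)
The plan is to prove the efficiency bound for the jump estimator $\eta_{J_K}$ by observing that the jump of $\bm{u}_h$ across an edge is entirely controlled by the penalty term in the DG norm, since the exact solution $\bm{u}$ is continuous (belongs to $\bm{H}^1_0(\Omega)$) and hence has vanishing jumps. Concretely, for each edge $E\in\mathcal{E}_h$ (interior or boundary), we have $[\![\bm{u}\otimes\bm{n}]\!]=0$ because $\bm{u}$ is single-valued (and zero on $\partial\Omega$), so
\[
[\![\bm{u}_h\otimes\bm{n}]\!] = [\![(\bm{u}_h-\bm{u})\otimes\bm{n}]\!] = -[\![(\bm{u}-\bm{u}_h)\otimes\bm{n}]\!].
\]
Therefore $\eta_{J_K}^2 = \nu\sum_{E\in\partial K}\gamma_h\|[\![(\bm{u}-\bm{u}_h)\otimes\bm{n}]\!]\|_{0,E}^2$, and summing over all $K\in\mathcal{T}_h$ (each edge appearing a bounded number of times) gives
\[
\sum_{K\in\mathcal{T}_h}\eta_{J_K}^2 \lesssim \nu\sum_{E\in\mathcal{E}_h}\gamma_h\|[\![(\bm{u}-\bm{u}_h)\otimes\bm{n}]\!]\|_{0,E}^2
= a^i_p(\bm{u}-\bm{u}_h,\bm{u}-\bm{u}_h) + a^\partial_p(\bm{u}-\bm{u}_h,\bm{u}-\bm{u}_h),
\]
where I am using that the interior penalty form $a^i_p$ and the boundary penalty form $a^\partial_p$ are precisely the sums of these edge-wise penalty contributions (up to the harmless factor of $2$ in the boundary term and the fact that for the continuous function $\bm{u}$ the boundary trace $\bm{u}\otimes\bm{n}$ vanishes, so only $\bm{u}_h$ survives).

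The right-hand side above is dominated by $\vertiii{\bm{u}-\bm{u}_h}^2$ by the very definition of the triple-bar norm, namely
\[
\vertiii{\bm{u}-\bm{u}_h}^2 = \nu\|\nabla_h(\bm{u}-\bm{u}_h)\|_0^2 + a^i_p(\bm{u}-\bm{u}_h,\bm{u}-\bm{u}_h) + a^\partial_p(\bm{u}-\bm{u}_h,\bm{u}-\bm{u}_h).
\]
Taking square roots yields the claimed estimate. I would present this as a short three-line argument.

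There is really no serious obstacle here; the only point requiring a little care is the bookkeeping with the jump/sum operator conventions used in this paper (recall $[\![v]\!]=v_++v_-$ is the \emph{sum} operator and for a continuous function this equals $2v|_E$, not zero — so one must instead work with $[\![\bm{v}\otimes\bm{n}]\!]=(\bm{v}_+-\bm{v}_-)\otimes\bm{n}_+$, which \emph{is} the genuine jump and does vanish for $\bm{u}\in\bm{H}^1_0$). Once that convention is pinned down, the identity $[\![\bm{u}_h\otimes\bm{n}]\!]=-[\![(\bm{u}-\bm{u}_h)\otimes\bm{n}]\!]$ on every interior edge and $[\![\bm{u}_h\otimes\bm{n}]\!]=\bm{u}_h|_E\otimes\bm{n}=(\bm{u}_h-\bm{u})|_E\otimes\bm{n}$ on every boundary edge (using $\bm{u}|_{\partial\Omega}=0$) is immediate, and the rest is just summing edge contributions and recognising the penalty terms inside $\vertiii{\cdot}$. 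A minor technical note: the finite overlap of edges among elements (each edge $E$ lies in the boundary of at most two cells $K$) is what produces the implied constant in the first displayed inequality, and this is independent of $h$ and of $\nu$.
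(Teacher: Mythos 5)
Your argument is correct and is essentially the paper's own proof: use $[\![\bm{u}\otimes\bm{n}]\!]=0$ (continuity of $\bm{u}$ on interior edges and $\bm{u}=\bm{0}$ on $\partial\Omega$) to replace $\bm{u}_h$ by $\bm{u}_h-\bm{u}$ in the jump terms, sum over elements with the finite edge overlap, and recognise the resulting penalty contributions as part of $\vertiii{\bm{u}-\bm{u}_h}^2$. Your extra bookkeeping remarks about the sum-versus-jump convention and the factor $2$ in $a^\partial_p$ are fine but do not change the route.
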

\begin{proof}
Using $[\![\bm{u}\otimes\bm{n}]\!]=0$, we get 
\begin{align*}
\eta_{J_K}^2 =\nu\sum_{E\in \partial K}\gamma_h \|[\![\bm{u}_h\otimes\bm{n}]\!]\|^{2}_{0,E}
=\nu\sum_{E\in \partial K}\gamma_h \|[\![(\bm{u}_h-\bm{u})\otimes\bm{n}]\!]\|^{2}_{0,E}.
\end{align*}
Summing over all $K\in\mathcal{T}_h$, we have
\begin{align*}
\Big(\sum_{K\in\mathcal{T}_h}\eta_{J_K}^2\Big)^{1/2}
\lesssim \Big(\nu\sum_{E\in \mathcal{E}_h}\gamma_h \|[\![(\bm{u}_h-\bm{u})\otimes\bm{n}]\!]\|^{2}_{0,E}\Big)^{1/2}
\lesssim \vertiii{\bm{u}-\bm{u}_h}.
\end{align*}
\end{proof}

\begin{lemma}\label{efficie12}
Let $(\bm{u},p,\lambda)\in \bm{H}^1_0(\Omega)\times L^2_0(\Omega) \times \mathbb{R}_+$ solve \eqref{stpre11}, and  $(\bm{u}_h,p_h,\lambda_h)\in \bm{V}_h\times Q_h\times\mathbb{R}_+$.
Then we have 
\begin{align*}
\Big(\sum_{K\in\mathcal{T}_h}\eta_{R_K}^2\Big)^{1/2}\lesssim \vertiii{\bm{u}-\bm{u}_h}+\nu^{-1/2}\|p-p_h\|+h.o.t.,
\end{align*}
where $h.o.t.= \nu^{-1/2}\big(\displaystyle{\sum_{K\in\mathcal{T}_h}} h^2_K\|\lambda \bm{u}-\lambda_h \bm{u}_h\|_{0,K}^2\big)^{1/2}$.
\end{lemma}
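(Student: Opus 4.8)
The plan is to estimate each local contribution $\eta_{R_K}$ by the standard interior bubble function argument of \cite{RV,RV1} and then sum. Fix $K\in\mathcal{T}_h$ and set $\bm{R}_K := \lambda_h\bm{u}_h + \nu\Delta\bm{u}_h - \nabla p_h$ on $K$. Since $\bm{u}_h|_K\in RT_k(K)$ and $p_h|_K\in Q_k(K)$, the function $\bm{R}_K$ is a vector-valued polynomial on $K$, so the estimates \eqref{efficie11} apply to it. Let $b_K$ be the element bubble and put $\bm{w}_K := b_K\bm{R}_K$, extended by zero outside $K$; because $b_K$ vanishes on $\partial K$ we have $\bm{w}_K\in\bm{H}^1_0(\Omega)$ with support in $K$. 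The first estimate in \eqref{efficie11} gives
\[
\|\bm{R}_K\|_{0,K}^2\lesssim\|b_K^{1/2}\bm{R}_K\|_{0,K}^2=\int_K\bm{R}_K\cdot\bm{w}_K\,d\bm{x},
\]
so it suffices to bound this integral.

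The key step is to rewrite $\int_K\bm{R}_K\cdot\bm{w}_K$ using both the continuous and the discrete problems. Elementwise integration by parts on $K$ (the boundary terms vanish since $\bm{w}_K|_{\partial K}=\bm{0}$) yields
\[
\int_K(\nu\Delta\bm{u}_h-\nabla p_h)\cdot\bm{w}_K\,d\bm{x}=-\nu(\nabla\bm{u}_h,\nabla\bm{w}_K)_K+(p_h,\nabla\cdot\bm{w}_K)_K,
\]
while testing the weak formulation \eqref{variational} with $\bm{w}_K$ (admissible since $\bm{w}_K\in\bm{H}^1_0(\Omega)$) gives $\lambda(\bm{u},\bm{w}_K)_K-\nu(\nabla\bm{u},\nabla\bm{w}_K)_K+(p,\nabla\cdot\bm{w}_K)_K=0$. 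Subtracting, we obtain
\[
\int_K\bm{R}_K\cdot\bm{w}_K\,d\bm{x}=(\lambda_h\bm{u}_h-\lambda\bm{u},\bm{w}_K)_K-\nu(\nabla(\bm{u}_h-\bm{u}),\nabla\bm{w}_K)_K+(p_h-p,\nabla\cdot\bm{w}_K)_K.
\]
Applying the Cauchy--Schwarz inequality termwise together with the remaining bubble estimates $\|\bm{w}_K\|_{0,K}\lesssim\|\bm{R}_K\|_{0,K}$, $\|\nabla\bm{w}_K\|_{0,K}\lesssim h_K^{-1}\|\bm{R}_K\|_{0,K}$ from \eqref{efficie11} (and $\|\nabla\cdot\bm{w}_K\|_{0,K}\lesssim\|\nabla\bm{w}_K\|_{0,K}$), then dividing by $\|\bm{R}_K\|_{0,K}$, leads to
\[
\|\bm{R}_K\|_{0,K}\lesssim\|\lambda\bm{u}-\lambda_h\bm{u}_h\|_{0,K}+\nu h_K^{-1}\|\nabla(\bm{u}-\bm{u}_h)\|_{0,K}+h_K^{-1}\|p-p_h\|_{0,K}.
\]

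To conclude, multiply this inequality by $\nu^{-1/2}h_K$, so the left-hand side becomes exactly $\eta_{R_K}$, square, and sum over $K\in\mathcal{T}_h$; using $\nu\sum_{K}\|\nabla(\bm{u}-\bm{u}_h)\|_{0,K}^2=\nu\|\nabla_h(\bm{u}-\bm{u}_h)\|_0^2\le\vertiii{\bm{u}-\bm{u}_h}^2$ (since $a^i_p,a^\partial_p\ge0$) and $\sum_K\|p-p_h\|_{0,K}^2=\|p-p_h\|_0^2$, the remaining sum $\nu^{-1}\sum_K h_K^2\|\lambda\bm{u}-\lambda_h\bm{u}_h\|_{0,K}^2$ is precisely the square of the claimed $h.o.t.$, which finishes the proof. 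I do not expect a genuine obstacle here; the only point needing care is that $\bm{u}$ need not belong to $\bm{H}^2(\mathcal{T}_h)$, so one must never write $\Delta\bm{u}$ pointwise — this is circumvented by testing the weak form \eqref{variational} with $\bm{w}_K$ rather than manipulating a strong residual — together with the routine bookkeeping of the powers of $\nu$ and $h_K$ in the scaling of $\eta_{R_K}$.
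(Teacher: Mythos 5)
Your proof is correct and follows essentially the same route as the paper: the interior bubble-function technique with the estimates \eqref{efficie11}, Cauchy--Schwarz, the scaling $\nu^{-1/2}h_K$, and summation over $K$. The only (minor) difference is that you test the weak formulation \eqref{variational} with $b_K\bm{R}_K$ instead of subtracting the pointwise identity $\lambda\bm{u}+\nu\Delta\bm{u}-\nabla p=0$ as the paper does, which is a slightly more careful treatment when $\bm{u}\notin\bm{H}^2(\mathcal{T}_h)$ but leads to the same estimate.
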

\begin{proof}
Define the functions $R$ and $W$ locally for any $K\in\mathcal{T}_h$ by
\[
R|_K = \lambda_h\bm{u}_h+\nu\Delta \bm{u}_h-\nabla p_h
\quad\text{and}\quad
W=\nu^{-1}h^{2}_K R b_K
\] 
From \eqref{efficie11} we have
\begin{align*}
\eta^2_{R_K}&=\nu^{-1}h^2_K \|R\|^2_{0,K}\lesssim \int_K R\cdot(\nu^{-1}h^2_K  R b_K) d\bm{x}
=\int_K(\lambda_h\bm{u}_h+\nu\Delta \bm{u}_h-\nabla p_h)\cdot W\,d\bm{x}.
\end{align*}
Note that $\lambda\bm{u}+\nu\Delta \bm{u}-\nabla p=0$. Subtracting this from the last term, using integration by parts
and $W|_{\partial K}=0$, we obtain
\begin{align*}
\eta^2_{R_K}&\lesssim \nu\int_K\nabla( \bm{u}-\bm{u}_h)\cdot\nabla W\,d\bm{x}
+\int_K(p_h-p)\nabla \cdot W\,d\bm{x}+\int_K(\lambda_h \bm{u}_h-\lambda \bm{u})\cdot W\,d\bm{x}.
\end{align*}
Applying Cauchy-Schwarz inequality, implies
\begin{align}
\begin{split}\label{eq:eff1}
\eta^2_{R_K}\lesssim &\left(\nu^{1/2}\|\nabla(\bm{u}-\bm{u}_h)\|_{0,K}+\nu^{-1/2}\|p-p_h\|_{0,K}+ \nu^{-1/2}h_K\|\lambda_h \bm{u}_h-\lambda \bm{u}\|_{0,K}\right)\\
&\quad\left(\nu^{1/2}\|\nabla W\|_{0,K}+\nu^{1/2} h^{-1}_K\|W\|_{0,K}\right).
\end{split}
\end{align}
From \eqref{efficie11} we get
\begin{align*}
\nu^{1/2}\|\nabla W\|_{0,K}+\nu^{1/2} h^{-1}_K\|W\|_{0,K}
\lesssim \nu^{-1/2}h_K\|R\|_{0,K} = \eta_{R_K}.
\end{align*}
Hence, dividing \eqref{eq:eff1} by $\eta_{R_K}$  and taking the square-root of the sum of the squares over all $K\in\mathcal{T}_h$ ends the proof. 
\end{proof}
\par
Let $E$ be an interior edge which is shared by two elements $K_1$ and $K_2$. 
Let $b_E$ denote the standard polynomial edge bubble function for $E$ with support in
$\omega_E=\{K_1,K_2\}$. In case of a regular edge $E$, we choose $\widetilde{K}=K_2$. 
When one vertex of $E$ is a hanging node, then we choose $K_1$ such that $E$ is an entire edge of $K_1$ and define $\widetilde{K}\subset K_2$ as the largest rectangle contained in $K_2$ such that $E$ is one of the entire edges of $\widetilde{K}$. 
We then set $\widetilde{\omega}_E=\{K,\widetilde{K}\}$.
\par
If  $\bm{\sigma}$ is a vector-valued polynomial function on $E$, then 
 \begin{align}\label{efficie131}
 \|\bm{\sigma}\|_{0,E}\lesssim \|b_E^{1/2}\bm{\sigma}\|_{0,E}.
\end{align}   
Moreover we can define an extension $\bm{\sigma}_b\in \bm{H}^{1}_0(\widetilde{\omega}_E)$ such that $\bm{\sigma}_b|_E=b_E\bm{\sigma}$ and from \cite{RV,AMOJT,GKDS}  we have
\begin{align}\label{efficie13}
\begin{split}
\|\bm{\sigma}_b\|_{0,K}&\lesssim h_E^{1/2}\|\bm{\sigma}\|_{0,E} \quad\forall K\in\widetilde{\omega}_{E},\\
\|\nabla\bm{\sigma}_b\|_{0,K}&\lesssim h_E^{-1/2}\|\bm{\sigma}\|_{0,E}\quad \forall K\in\widetilde{\omega}_{E}.
\end{split}
\end{align}
\begin{lemma}\label{lemma:4.9}
Let $(\bm{u},p,\lambda)\in \bm{H}^1_0(\Omega)\times L^2_0(\Omega) \times \mathbb{R}_+$ solve \eqref{stpre11}, and  $(\bm{u}_h,p_h,\lambda_h)\in \bm{V}_h\times Q_h\times\mathbb{R}_+$.
Then we have 
\begin{align*}
\Big(\sum_{K\in\mathcal{T}_h}\eta_{E_K}^2\Big)^{1/2}\lesssim \vertiii{\bm{u}-\bm{u}_h}+\nu^{-1/2}\|p-p_h\|+h.o.t.,
\end{align*}
where $h.o.t.= \nu^{-1/2}\big(\displaystyle{\sum_{K\in\mathcal{T}_h}} h_K^2\|\lambda \bm{u}-\lambda_h \bm{u}_h\|_{0,K}^2\big)^{1/2}$.
\end{lemma}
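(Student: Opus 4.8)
The plan is to adapt the edge–bubble function technique of \cite{RV,AMOJT,GKDS} to the hanging–node patches $\widetilde\omega_E$ defined above, in close analogy with the proof of Lemma~\ref{efficie12}. Fix an interior edge $E$ with patch $\widetilde\omega_E=\{K,\widetilde K\}$ and set $\bm\sigma:=[\![(p_h\bm I-\nu\nabla\bm u_h)\bm n]\!]$, which is a vector–valued polynomial on $E$. The first key point is that the combined flux of the exact solution belongs to $\bm H^{\textrm{div}}(\Omega,\mathbb R^{2\times2})$: indeed $\textrm{div}(p\bm I-\nu\nabla\bm u)=\nabla p-\nu\Delta\bm u=\lambda\bm u\in\bm L^2(\Omega)$ by \eqref{pre12}, so its normal trace is single–valued across $E$ and $[\![(p\bm I-\nu\nabla\bm u)\bm n]\!]=0$. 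Hence $\bm\sigma=[\![((p_h-p)\bm I-\nu\nabla(\bm u_h-\bm u))\bm n]\!]$, which is what makes the error appear on the right–hand side.

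Next I use \eqref{efficie131} to get $\|\bm\sigma\|_{0,E}^2\lesssim\int_E b_E|\bm\sigma|^2\,d\bm s=\int_E\bm\sigma\cdot\bm\sigma_b\,d\bm s$, where $\bm\sigma_b\in\bm H^1_0(\widetilde\omega_E)$ is the extension from \eqref{efficie13} with $\bm\sigma_b|_E=b_E\bm\sigma$. Since $\bm\sigma_b$ vanishes on $\partial\widetilde\omega_E$, the right–hand side equals $\sum_{K'\in\widetilde\omega_E}\int_{\partial K'}\big((p_h-p)\bm I-\nu\nabla(\bm u_h-\bm u)\big)\bm n_{K'}\cdot\bm\sigma_b\,d\bm s$. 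Integrating by parts on each $K'$ and substituting the strong forms $\nabla p_h-\nu\Delta\bm u_h=\lambda_h\bm u_h-R$ (with $R|_{K'}=\lambda_h\bm u_h+\nu\Delta\bm u_h-\nabla p_h$ as in the proof of Lemma~\ref{efficie12}) and $\nabla p-\nu\Delta\bm u=\lambda\bm u$ gives
\[
\|\bm\sigma\|_{0,E}^2\lesssim\sum_{K'\in\widetilde\omega_E}\Big(\int_{K'}(\lambda_h\bm u_h-\lambda\bm u-R)\cdot\bm\sigma_b\,d\bm x+\int_{K'}\big((p_h-p)\bm I-\nu\nabla(\bm u_h-\bm u)\big)\!:\!\nabla\bm\sigma_b\,d\bm x\Big).
\]
Applying the Cauchy--Schwarz inequality, the scaled bounds \eqref{efficie13} for $\bm\sigma_b$ and $\nabla\bm\sigma_b$, and dividing through by $\|\bm\sigma\|_{0,E}$ leaves
\[
\|\bm\sigma\|_{0,E}\lesssim\sum_{K'\in\widetilde\omega_E}\Big(h_E^{1/2}\big(\|\lambda_h\bm u_h-\lambda\bm u\|_{0,K'}+\|R\|_{0,K'}\big)+h_E^{-1/2}\big(\|p-p_h\|_{0,K'}+\nu\|\nabla(\bm u-\bm u_h)\|_{0,K'}\big)\Big).
\]

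To conclude I multiply by $\nu^{-1/2}h_E^{1/2}$, use $h_E\lesssim h_{K'}$ from shape–regularity, and recognise $\nu^{-1/2}h_{K'}\|R\|_{0,K'}=\eta_{R_{K'}}$ together with $\nu^{-1/2}h_{K'}\|\lambda_h\bm u_h-\lambda\bm u\|_{0,K'}$ as the stated higher–order term; the remaining two contributions are controlled by $\vertiii{\bm u-\bm u_h}$ and $\nu^{-1/2}\|p-p_h\|$ on $K'$. Squaring, summing over all interior edges $E$ (each element lying in only boundedly many patches $\widetilde\omega_E$ by shape–regularity), and finally invoking Lemma~\ref{efficie12} to bound $\big(\sum_K\eta_{R_K}^2\big)^{1/2}$ then yields the asserted estimate. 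The main obstacle — and the reason the patch $\widetilde\omega_E$ and the sub–rectangle $\widetilde K$ were introduced before the lemma — is the presence of hanging nodes: one must choose the two cells carrying the bubble extension so that $E$ is an \emph{entire} edge of each of them, since only then are $b_E$ and $\bm\sigma_b$ well defined with the scaling in \eqref{efficie131}--\eqref{efficie13} depending solely on shape–regularity, and only then does the boundary term on $\partial\widetilde\omega_E$ vanish so that the element–wise integration by parts produces exactly $\int_E\bm\sigma\cdot\bm\sigma_b\,d\bm s$. Once the patch is fixed, the computation on $\widetilde K$ uses merely the polynomial restriction $(p_h\bm I-\nu\nabla\bm u_h)|_{\widetilde K}$ and proceeds exactly as in the regular case.
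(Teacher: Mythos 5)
Your argument is correct and follows essentially the same route as the paper: edge bubble with the extension $\bm\sigma_b$, vanishing jump of the exact flux $[\![(p\bm I-\nu\nabla\bm u)\bm n]\!]=0$, element-wise integration by parts over the hanging-node patch $\widetilde\omega_E$, splitting into interior residual, higher-order and error terms, and finally absorbing the $\eta_{R_K}$ contributions via Lemma~\ref{efficie12} after summing with finite patch overlap. The only cosmetic difference is that the paper works with the scaled test function $\Lambda=\nu^{-1}h_E R b_E$ from the start, whereas you introduce the scaling $\nu^{-1/2}h_E^{1/2}$ at the end; the content is identical.
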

\begin{proof}
Let for any interior edge $E\in\mathcal{E}_h^i$ the functions $R$ and $\Lambda$ be such that 
\begin{align*}
R|_{E} =  [\![(p_h\bm{I}-\nu\nabla \bm{u}_h)\bm{n}]\!]
\qquad\text{and}\qquad
\Lambda=\nu^{-1}h_ERb_E.
\end{align*}
Using \eqref{efficie131} and $[\![(p\bm{I}-\nabla \bm{u})\bm{n}]\!]|_E=0$  
we get
\begin{align*}
\nu^{-1}h_E \| R\|^{2}_{0,E}\lesssim\int_E R\cdot(\nu^{-1}h_ERb_E)d\bm{s}
=\int_E[\![((p_h-p)\bm{I}-\nabla (\bm{u}_h-\bm{u}))\bm{n}]\!]\cdot\Lambda\,d\bm{s}
\end{align*}
Using Green's formula over each of the two element of $\tilde\omega_E$, gives
\begin{align*}
\int_E[\![((p_h-p)\bm{I}-\nabla (\bm{u}_h-\bm{u}))\bm{n}]\!]\cdot\Lambda\,d\bm{s}
&=\sum_{K\in\widetilde{\omega}_E} \int_K( -\nu \Delta(\bm{u}-\bm{u}_h)+\nabla (p-p_h))\cdot\Lambda\,d\bm{x}\\
&-\sum_{K\in\widetilde{\omega}_E}\int_K(\nu\nabla (\bm{u}-\bm{u}_h)- (p-p_h)\bm{I})\!:\!\nabla\Lambda\,d\bm{x}.
\end{align*}
Using $\lambda\bm{u}+\nu\Delta \bm{u}-\nabla p=0$, we obtain
\begin{align}\label{effjumps}
\begin{split}
\nu^{-1}h_E \| R\|^{2}_{0,E}
&\lesssim \sum_{K\in\widetilde{\omega}_E}\int_K(\lambda_h\bm{u}_h+\nu\Delta \bm{u}_h-\nabla p_h)\cdot \Lambda\,d\bm{x}\\
&\quad+\sum_{K\in\widetilde{\omega}_E}\int_K (\lambda \bm{u}-\lambda_h \bm{u}_h)\cdot \Lambda\,d\bm{x}\\
&\quad+\sum_{K\in\widetilde{\omega}_E} \int_K(-\nu \nabla(\bm{u}-\bm{u}_h)+ (p-p_h)\bm{I})\!:\! \nabla\Lambda\,d\bm{x}\\
&= T_1+T_2+T_3.
\end{split}
\end{align} 
Using Cauchy-Schwarz inequality, shape-regularity of the mesh, and \eqref{efficie13} yields
\begin{align*}
T_1\lesssim \left(\sum_{K\in\widetilde{\omega}_E}\eta^2_{R_K}\right)^{1/2}\left(\sum_{K\in\widetilde{\omega}_E}
\nu  h_{K}^{-2}\|\Lambda\|^{2}_{0,K}\right)^{1/2}
\lesssim \left(\sum_{K\in\widetilde{\omega}_E}\eta^2_{R_K}\right)^{1/2}\!\nu^{-1/2}h_E^{1/2} \| R\|_{0,E},
\end{align*}  
\begin{align*}
T_2\lesssim \left(\sum_{K\in\widetilde{\omega}_E}\left(
\nu^{-1}h_K^2\|\lambda\bm{u}-\lambda_h\bm{u}_h\|^2_{0,K}\right)\right)^{1/2}\nu^{-1/2}h_E^{1/2} \| R\|_{0,E},
\end{align*}
as well as
\begin{align*}
T_3\lesssim \left(\sum_{K\in\widetilde{\omega}_E}\left(\nu\|\nabla(\bm{u}-\bm{u}_h)\|^2_{0,K}
+\nu^{-1}\|p-p_h\|^2_{0,K}\right)
\right)^{1/2}\nu^{-1/2}h_E^{1/2} \| R\|_{0,E}.
\end{align*}
Combining the above estimates $T_1, T_2$ and $T_3$, dividing \eqref{effjumps} by $\nu^{-1/2}h_E^{1/2} \| R\|_{0,E}$  and
summing over all interior edges of all $K\in\mathcal{T}_h$ the desired result is proven by the finite overlap of the patches
$\widetilde{\omega}_E$ and Lemma~\ref{efficie12}.
\end{proof}
 \begin{theorem}\label{efficie}
 Let $(\bm{u},p,\lambda)\in \bm{H}^1_0(\Omega)\times L^2_0(\Omega) \times \mathbb{R}_+$ be the solution of the Stokes eigenvalue problem \eqref{stpre11}  and $(\bm{u}_h,p_h,\lambda_h)\in\bm{V}_h\times Q_h\times\mathbb{R}_+$ the $H^{\textrm{div}}$-DG approximation obtained by \eqref{hdiv11}. Then the a posteriori error estimator $\eta_h$ is efficient in the sense that
 \begin{align}
\eta_h \lesssim \vertiii{\bm{u}-\bm{u}_h}+\nu^{-1/2}\|p-p_h\|_{0} + h.o.t.,
\end{align}   
where $h.o.t. =\nu^{-1/2}\big(\displaystyle{\sum_{K\in\mathcal{T}_h}}h^2_K\|\lambda\bm{u}-\lambda_h\bm{u}_h\|_{0,K}^2\big)^{1/2}$.
\end{theorem}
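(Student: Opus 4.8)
The plan is to prove the efficiency of the global estimator $\eta_h$ by simply assembling the three component estimates that have already been established. Recall that $\eta_h^2 = \sum_{K} \eta_K^2 = \sum_K (\eta_{R_K}^2 + \eta_{E_K}^2 + \eta_{J_K}^2)$, so it suffices to bound each of the three sums $\big(\sum_K \eta_{J_K}^2\big)^{1/2}$, $\big(\sum_K \eta_{R_K}^2\big)^{1/2}$ and $\big(\sum_K \eta_{E_K}^2\big)^{1/2}$ individually by the right-hand side and then combine them via the triangle inequality (in the $\ell^2$ sense over the three contributions).

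First I would invoke Lemma~\ref{efficiej11}, which gives directly
\begin{align*}
\Big(\sum_{K\in\mathcal{T}_h}\eta_{J_K}^2\Big)^{1/2}\lesssim \vertiii{\bm{u}-\bm{u}_h},
\end{align*}
with no higher-order term. Next I would apply Lemma~\ref{efficie12} to control the interior residual contribution,
\begin{align*}
\Big(\sum_{K\in\mathcal{T}_h}\eta_{R_K}^2\Big)^{1/2}\lesssim \vertiii{\bm{u}-\bm{u}_h}+\nu^{-1/2}\|p-p_h\|_0+h.o.t.,
\end{align*}
and Lemma~\ref{lemma:4.9} for the edge residual contribution, which yields the same bound with the same $h.o.t. = \nu^{-1/2}\big(\sum_{K}h_K^2\|\lambda\bm{u}-\lambda_h\bm{u}_h\|_{0,K}^2\big)^{1/2}$.

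Finally, I would write $\eta_h \le \big(\sum_K\eta_{R_K}^2\big)^{1/2} + \big(\sum_K\eta_{E_K}^2\big)^{1/2} + \big(\sum_K\eta_{J_K}^2\big)^{1/2}$ and substitute the three bounds above, noting that the three $h.o.t.$ terms are identical (or trivially absent) and can be absorbed into a single one. Since all hidden constants in Lemmas~\ref{efficiej11}, \ref{efficie12} and \ref{lemma:4.9} depend only on shape-regularity and the finite overlap of the edge patches $\widetilde{\omega}_E$, the resulting constant is uniform, which completes the proof. There is no real obstacle here: all the genuine work—the bubble-function arguments, the handling of hanging nodes via $\widetilde{\omega}_E$, and the treatment of the data oscillation term—has already been carried out in the three preceding lemmas, so this theorem is purely an assembly step.
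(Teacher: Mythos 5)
Your proposal is correct and coincides with the paper's own argument: the theorem is indeed proved there simply by combining Lemmas~\ref{efficiej11}, \ref{efficie12} and \ref{lemma:4.9}, exactly as you describe. Your added remarks on the identical higher-order terms and on the uniformity of the constants are consistent with that assembly step.
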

\begin{proof}
The statement follows from a combination of Lemma~\ref{efficiej11}--\ref{lemma:4.9}.
\end{proof}
 \begin{corollary}\label{cor12}
If $\bm{u}\in \bm{H}^2(\mathcal{T}_h)$ and $p\in H^{1}(\mathcal{T}_h)$,  then the eigenvalue error satisfies
 \begin{align*}
 \eta_h^2\lesssim |\lambda-\lambda_h|+
  \lambda||\bm{u}-\bm{u}_h||_0^2+(h.o.t.)^2,
 \end{align*}
 where $h.o.t.=\nu^{-1/2}\big(\displaystyle{\sum_{K\in\mathcal{T}_h}}h^2_K\|\lambda\bm{u}-\lambda_h\bm{u}_h\|_{0,K}^2\big)^{1/2}$.
\end{corollary}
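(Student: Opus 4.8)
The plan is to combine the efficiency bound of Theorem~\ref{efficie} with the Rayleigh quotient identity from Theorem~\ref{riziden11}, exactly as the reliability Corollary~\ref{cor11} combined Theorem~\ref{realiab} with Theorem~\ref{riziden11}. First I would square the efficiency estimate, so that
\begin{align*}
\eta_h^2 \lesssim \vertiii{\bm{u}-\bm{u}_h}^2 + \nu^{-1}\|p-p_h\|_0^2 + (h.o.t.)^2
= \vertiii{(\bm{u}-\bm{u}_h,p-p_h)}^2 + (h.o.t.)^2,
\end{align*}
using $(a+b+c)^2\lesssim a^2+b^2+c^2$ and the definition \eqref{desnorm} of the mesh-dependent norm.

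Next I would relate $\vertiii{(\bm{u}-\bm{u}_h,p-p_h)}^2$ to the eigenvalue error. Since $\bm{u}$ and $\bm{u}_h$ are both exactly divergence-free, the computation in the proof of Corollary~\ref{cor11} gives
\begin{align*}
\mathcal{A}_h(\bm{u}-\bm{u}_h,p-p_h;\bm{u}-\bm{u}_h,p-p_h)
= \vertiii{\bm{u}-\bm{u}_h}^2 + C_h(\bm{u}-\bm{u}_h,\bm{u}-\bm{u}_h),
\end{align*}
and with the constant $C_\ast$ from that proof bounded uniformly (via \cite[Section 8]{DSCSAT}), one obtains
\begin{align*}
\vertiii{(\bm{u}-\bm{u}_h,p-p_h)}^2 \lesssim \mathcal{A}_h(\bm{u}-\bm{u}_h,p-p_h;\bm{u}-\bm{u}_h,p-p_h) + \vertiii{(\bm{u}-\bm{u}_h,p-p_h)}^2 \cdot 0,
\end{align*}
more precisely $\mathcal{A}_h(\cdot;\cdot) = \vertiii{\bm{u}-\bm{u}_h}^2 + C_h(\cdots) \ge (1-C_\ast)\vertiii{(\bm{u}-\bm{u}_h,p-p_h)}^2$ is the wrong sign, so instead I would just use the upper bound $\mathcal{A}_h(\bm{u}-\bm{u}_h,p-p_h;\bm{u}-\bm{u}_h,p-p_h) \le (1+C_\ast)\vertiii{(\bm{u}-\bm{u}_h,p-p_h)}^2$ is also not what is needed. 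The clean route is: from Theorem~\ref{riziden11},
\begin{align*}
\mathcal{A}_h(\bm{u}-\bm{u}_h,p-p_h;\bm{u}-\bm{u}_h,p-p_h)
= \|\bm{u}_h\|_0^2\left(\frac{\mathcal{A}_h(\bm{u}_h,p_h;\bm{u}_h,p_h)}{\|\bm{u}_h\|_0^2}-\lambda\right) + \lambda\|\bm{u}-\bm{u}_h\|_0^2,
\end{align*}
and the Rayleigh quotient on the right equals $\lambda_h$ (since $(\bm{u}_h,p_h,\lambda_h)$ solves \eqref{hdiv11} with $\|\bm{u}_h\|_0=1$, testing with $(\bm{u}_h,p_h)$ gives $\mathcal{A}_h(\bm{u}_h,p_h;\bm{u}_h,p_h)=\lambda_h$), so the bracket is exactly $\lambda_h-\lambda$. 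Hence
\begin{align*}
\vertiii{(\bm{u}-\bm{u}_h,p-p_h)}^2 \lesssim \mathcal{A}_h(\bm{u}-\bm{u}_h,p-p_h;\bm{u}-\bm{u}_h,p-p_h)
= (\lambda_h-\lambda) + \lambda\|\bm{u}-\bm{u}_h\|_0^2,
\end{align*}
where the first $\lesssim$ uses that $a_h$ is coercive in $\vertiii{\cdot}$ on the relevant space together with the uniform bound on $C_\ast$ to control the pressure contribution (invoking Lemma~\ref{Sinsuplem12} if needed to recover $\nu^{-1}\|p-p_h\|_0^2$).

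Finally I would assemble: $\eta_h^2 \lesssim \vertiii{(\bm{u}-\bm{u}_h,p-p_h)}^2 + (h.o.t.)^2 \lesssim |\lambda-\lambda_h| + \lambda\|\bm{u}-\bm{u}_h\|_0^2 + (h.o.t.)^2$, which is the claim. The main obstacle I anticipate is the sign/coercivity bookkeeping around the consistency term $C_h$: one must make sure that $\vertiii{(\bm{u}-\bm{u}_h,p-p_h)}^2$ is genuinely controlled by $\mathcal{A}_h(\bm{u}-\bm{u}_h,p-p_h;\bm{u}-\bm{u}_h,p-p_h)$ up to a uniform constant, which is where the hypotheses $\bm{u}\in\bm{H}^2(\mathcal{T}_h)$, $p\in H^1(\mathcal{T}_h)$ and the uniform bound on $C_\ast$ from \cite{DSCSAT} enter; the rest is a routine combination of already-established estimates.
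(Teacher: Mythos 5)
Your opening step (squaring Theorem~\ref{efficie} and using \eqref{desnorm}) and your use of the Rayleigh--quotient identity of Theorem~\ref{riziden11}, which with $\|\bm{u}_h\|_0=1$ and $\mathcal{A}_h(\bm{u}_h,p_h;\bm{u}_h,p_h)=\lambda_h$ gives $\mathcal{A}_h(\bm{u}-\bm{u}_h,p-p_h;\bm{u}-\bm{u}_h,p-p_h)=(\lambda_h-\lambda)+\lambda\|\bm{u}-\bm{u}_h\|_0^2$, match the paper. The genuine gap is the pivotal inequality $\vertiii{(\bm{u}-\bm{u}_h,p-p_h)}^2\lesssim\mathcal{A}_h(\bm{u}-\bm{u}_h,p-p_h;\bm{u}-\bm{u}_h,p-p_h)$. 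Since $\nabla\cdot\bm{u}=\nabla\cdot\bm{u}_h=0$, the right-hand side equals $a_h(\bm{u}-\bm{u}_h,\bm{u}-\bm{u}_h)=\vertiii{\bm{u}-\bm{u}_h}^2+C_h(\bm{u}-\bm{u}_h,\bm{u}-\bm{u}_h)$ and contains no pressure information at all, so it can never dominate the term $\nu^{-1}\|p-p_h\|_0^2$ appearing in $\vertiii{(\bm{u}-\bm{u}_h,p-p_h)}^2$. Lemma~\ref{Sinsuplem12} does not repair this: it produces a \emph{different} test pair $(\bm{v},q)$ with $\mathcal{A}_h(\bm{u},p;\bm{v},q)\gtrsim\vertiii{(\bm{u},p)}^2$, and only for arguments in $\bm{H}^1_0(\Omega)\times L^2_0(\Omega)$, which the broken error $\bm{u}-\bm{u}_h$ is not; it is not a coercivity statement for $\mathcal{A}_h$ along the diagonal. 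Even for the velocity part alone, your lower bound would require $C_\ast<1$ strictly, whereas the paper only asserts a uniform upper bound on $C_\ast$ and uses it in the opposite (upper-bound) direction in Corollary~\ref{cor11}.

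The paper closes the argument differently: inserting the identity of Theorem~\ref{riziden11} into the squared efficiency bound leaves $\eta_h^2\lesssim|\lambda-\lambda_h|+\nu^{-1}\|p-p_h\|_0^2+|C_h(\bm{u}-\bm{u}_h,\bm{u}-\bm{u}_h)|+\lambda\|\bm{u}-\bm{u}_h\|_0^2+(h.o.t.)^2$, and then the two unwanted terms are absorbed by an order-comparison argument: by the estimates of \cite[Section~8]{DSCSAT} together with the eigenvalue rate \eqref{eigenrr11}, the quantity $\nu^{-1}\|p-p_h\|_0^2+|C_h(\bm{u}-\bm{u}_h,\bm{u}-\bm{u}_h)|$ is of the same order as $|\lambda-\lambda_h|$, so their ratio $C_\#$ is uniformly bounded and the bound becomes $(1+C_\#)|\lambda-\lambda_h|+\lambda\|\bm{u}-\bm{u}_h\|_0^2+(h.o.t.)^2$. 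This absorption step is the missing idea in your proposal; without it (or some substitute that controls the pressure error and the consistency term by the eigenvalue error), your chain of inequalities does not close.
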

\begin{proof}
Combining Theorems \ref{riziden11} \& \ref{efficie} implies 
 \begin{align*}
 \eta_h^2\lesssim |\lambda-\lambda_h|+ \nu^{-1}\|p-p_h\|_{0}^2+|C_h(\bm{u}-\bm{u}_h,\bm{u}-\bm{u}_h)|+\lambda||\bm{u}-\bm{u}_h||_0^2+(h.o.t.)^2.
 \end{align*}
 Let $C_{\#}$ be defined as
 \begin{align}
 C_{\#} =\frac{\nu^{-1}\|p-p_h\|_{0}^2+|C_h(\bm{u}-\bm{u}_h,\bm{u}-\bm{u}_h)|}{|\lambda-\lambda_h|}.\nonumber
 \end{align}
 From the estimates in \cite[Section 8]{DSCSAT} and  the eigenvalue estimate (\ref{eigenrr11}), we can conclude that $\nu^{-1}\|p-p_h\|_{0}^2+|C_h(\bm{u}-\bm{u}_h, \bm{u}-\bm{u}_h)|$
is of the same order as $|\lambda-\lambda_h|$. Hence $C_\#$ can be bounded from above by a uniform constant.
Then it holds that
 \begin{align}
  \eta_h^2\lesssim (1+C_\#)|\lambda-\lambda_h|+\lambda||\bm{u}-\bm{u}_h||_0^2+(h.o.t.)^2.
 \end{align}
This completes the proof.
\end{proof}

\section{Numerical experiments}\label{comre}
This section is devoted to several numerical experiments on one convex and two non-convex domains.
The experiments verify reliability and efficiency of the proposed a posteriori error estimator of Section \ref{aposterrestana} for the eigenvalue
error of the smallest (simple) eigenvalue and up to polynomial degree 3.
\par
We employ the standard adaptive finite element loop with the steps \emph{solve}, \emph{estimate}, \emph{mark} and \emph{refine}.
To solve the algebraic eigenvalue problem we use the ARPACK library \cite{ARPACK} in combination with a direct solver.
We mark elements of the mesh for refinement on the level $\ell$ in a minimal set $\mathcal{M}_\ell$ using the bulk marking strategy \cite{Doerfler} with bulk parameter $\theta=1/2$,
i.e.  $\mathcal{M}_\ell$ is the minimal set such that $\theta\sum_{K\in\mathcal{T}_\ell}\eta_{K}^2 \leq \sum_{K\in\mathcal{M}_\ell}\eta_{K}^2$. The mesh is refined with one level irregular nodes.
The implementation of the method is done in the software library amandus \cite{GK}, which is based on 
the dealii finite element library \cite{WTLGMM}.
\par
In all experiments we consider the viscosity $\nu=1$ and
chose the penalty parameter $\gamma=k(k+1)/2$ for $k$-th order $RT_k\times Q_k$ finite element pairs, $k=1,2,3$.
Since the eigenvalues of the Stokes problem are related to the eigenvalues
of the buckling eigenvalue problem of clamped plates via the stream function formulation,
we can use reference values for the eigenvalues from \cite{BPEBPT,BMS2014,SZ2017}.

\subsection{Square domain}\label{testsquared} 
\begin{figure}[tbp]
\centering
\subfigure[]{
\includegraphics[width= 0.4\textwidth]{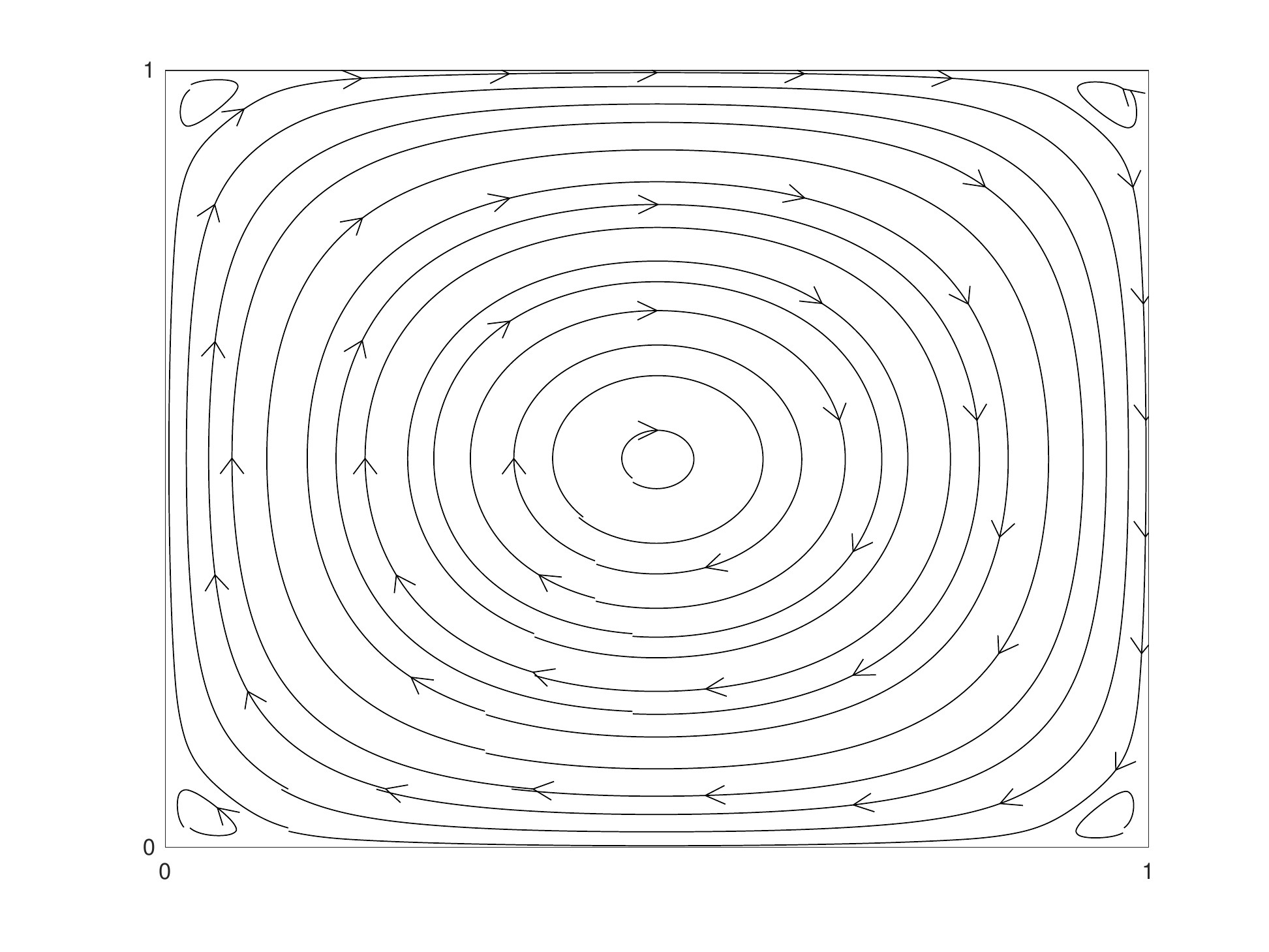}
\label{figS22a}
}
\hspace{0.08\textwidth}
\subfigure[]{
\includegraphics[width= 0.4\textwidth]{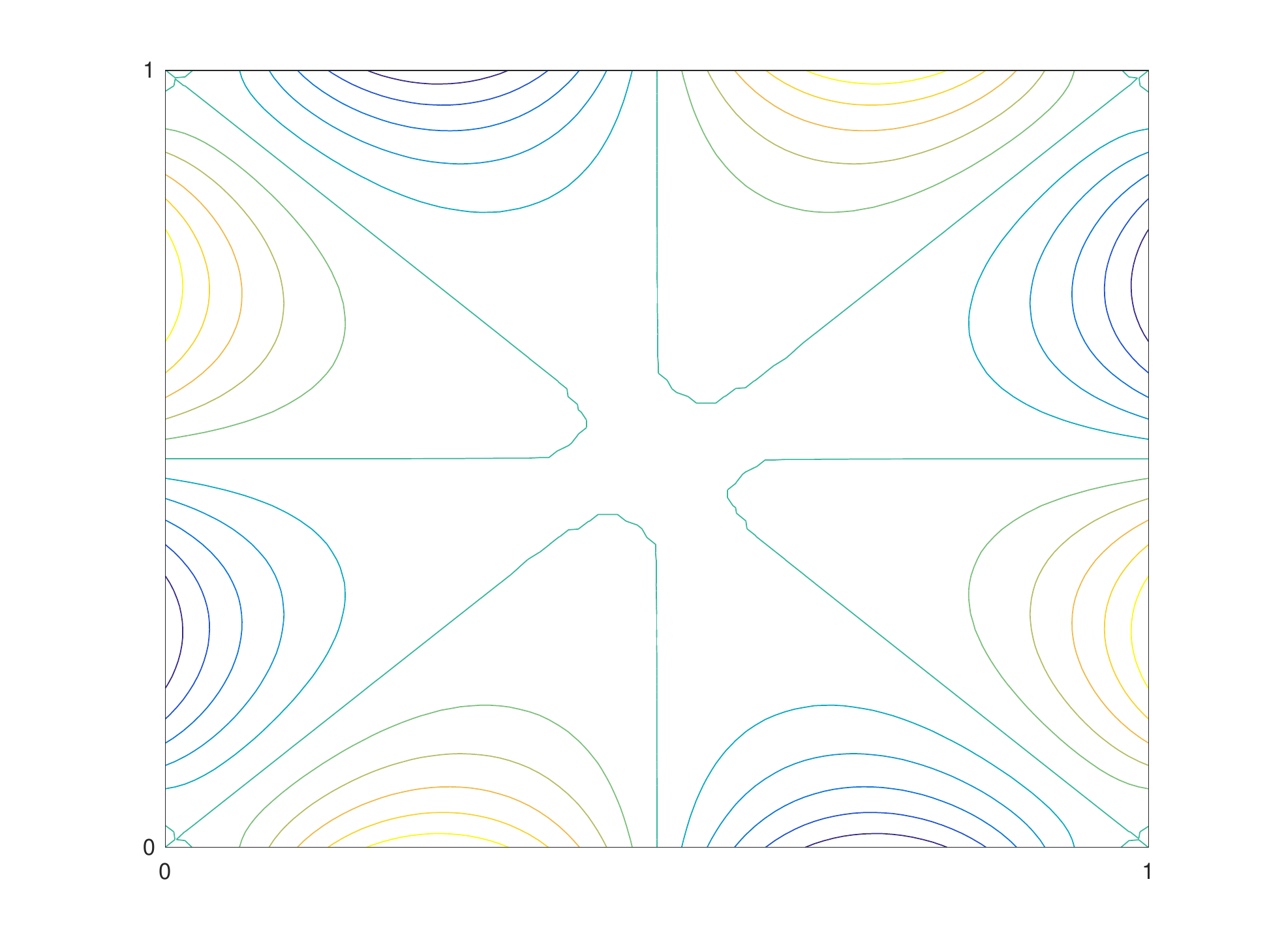}
\label{figS22c}
}
\caption{Streamline plot of the discrete eigenfunction $\bm{u}_\ell$ \subref{figS22a}, and plot of the discrete pressure $p_\ell$  \subref{figS22c}.}
\label{figexS212}
\end{figure}
\begin{figure}[tbp]
\centering
\includegraphics[width=\textwidth]{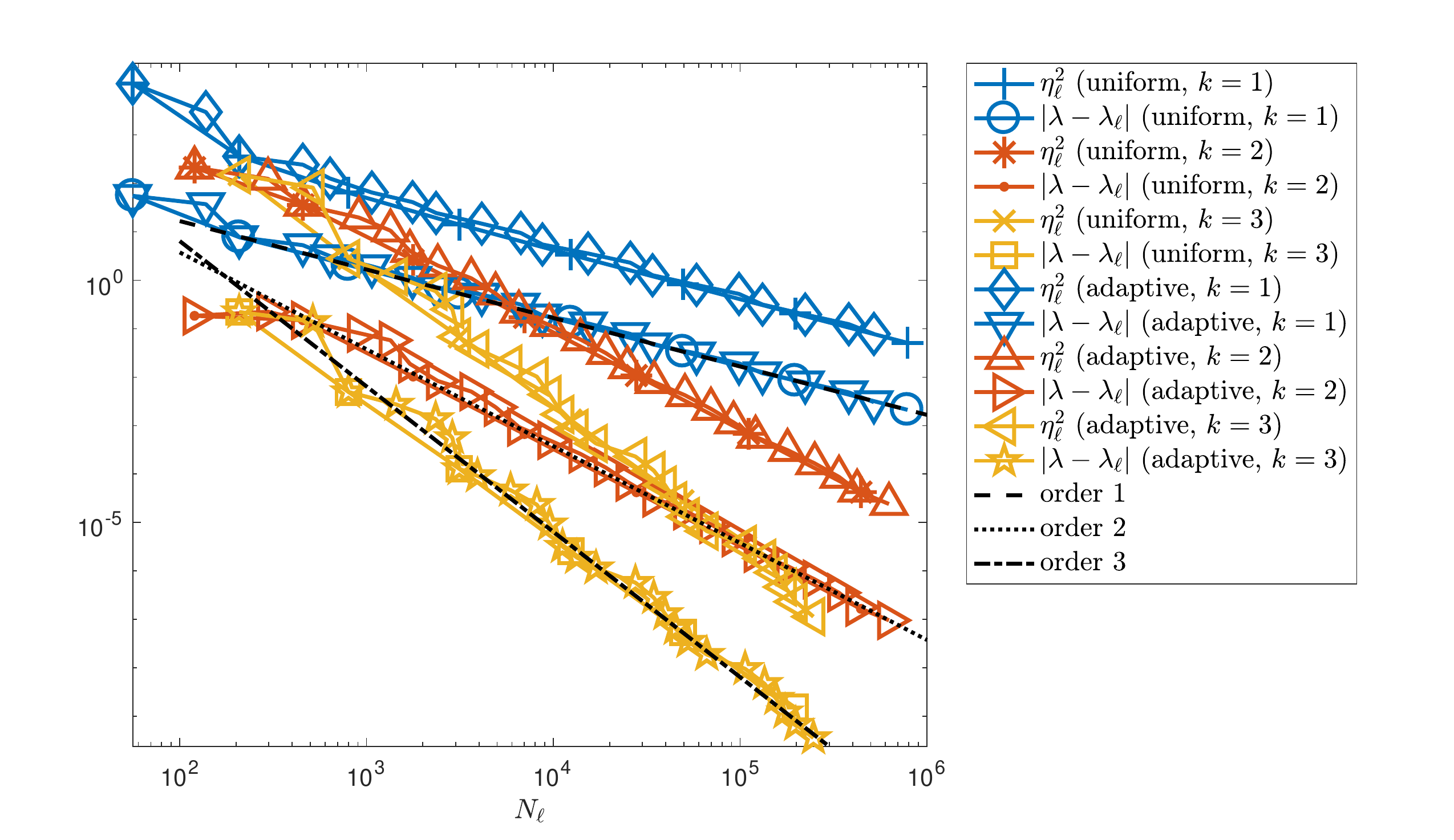}
\caption[]
{Convergence history of $|\lambda-\lambda_\ell|$ and $\eta_\ell^2$ on uniformly and adaptively refined meshes for the square domain.}
\label{figex21}
\end{figure}
In this example, we consider the square domain $\Omega=(0,1)^2$.  
The reference value for the first eigenvalue reads $\lambda=52.344691168$ \cite{BPEBPT,BMS2014,SZ2017}.
The streamline plot of the discrete eigenfunction $\bm{u}_\ell$ and the plot of the discrete pressure $p_\ell$ on a uniform mesh for $k=1$ are displayed in Figures~\ref{figS22a} 
and \ref{figS22c}, respectively.
In Figure~\ref{figex21}, we observe that both uniform and adaptive mesh refinement leads to optimal
orders of convergence $\mathcal{O}(N_\ell^{-k})$ for the eigenvalue error $|\lambda-\lambda_\ell|$.
This is due to the fact that the domain is convex and the first eigenfunction is smooth enough.
Note that for uniform meshes $\mathcal{O}(N_\ell^{-k})\approx \mathcal{O}(h^{2k})$, for $N_\ell = \text{dim}(V_h\times Q_h)$.
We observe that the convergence graphs for uniform and adaptive mesh refinement overlap each other for
both the eigenvalue errors $|\lambda-\lambda_\ell|$ as well the a posteriori error estimators $\eta_\ell^2$.
Moreover, we confirm that the a posteriori error estimator $\eta_\ell^2$ is numerically reliable and efficient.

\subsection{L-shaped domain}\label{testlsha}
\begin{figure}[tbp]
\centering
\subfigure[]{
\includegraphics[width=0.4\textwidth]{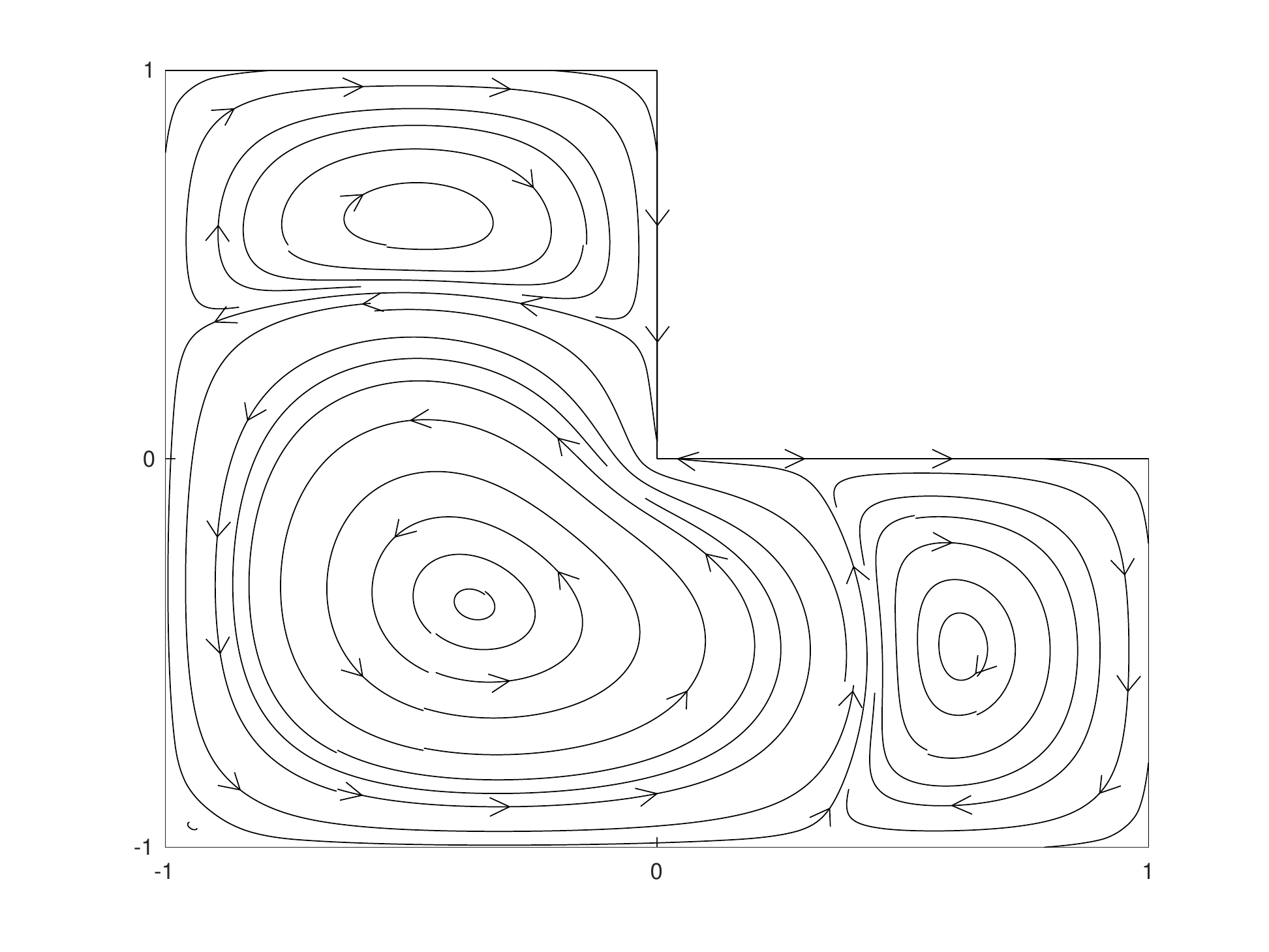}
\label{figL23a1}
}
\hspace{0.08\textwidth}
\subfigure[]{
\includegraphics[width=0.4\textwidth]{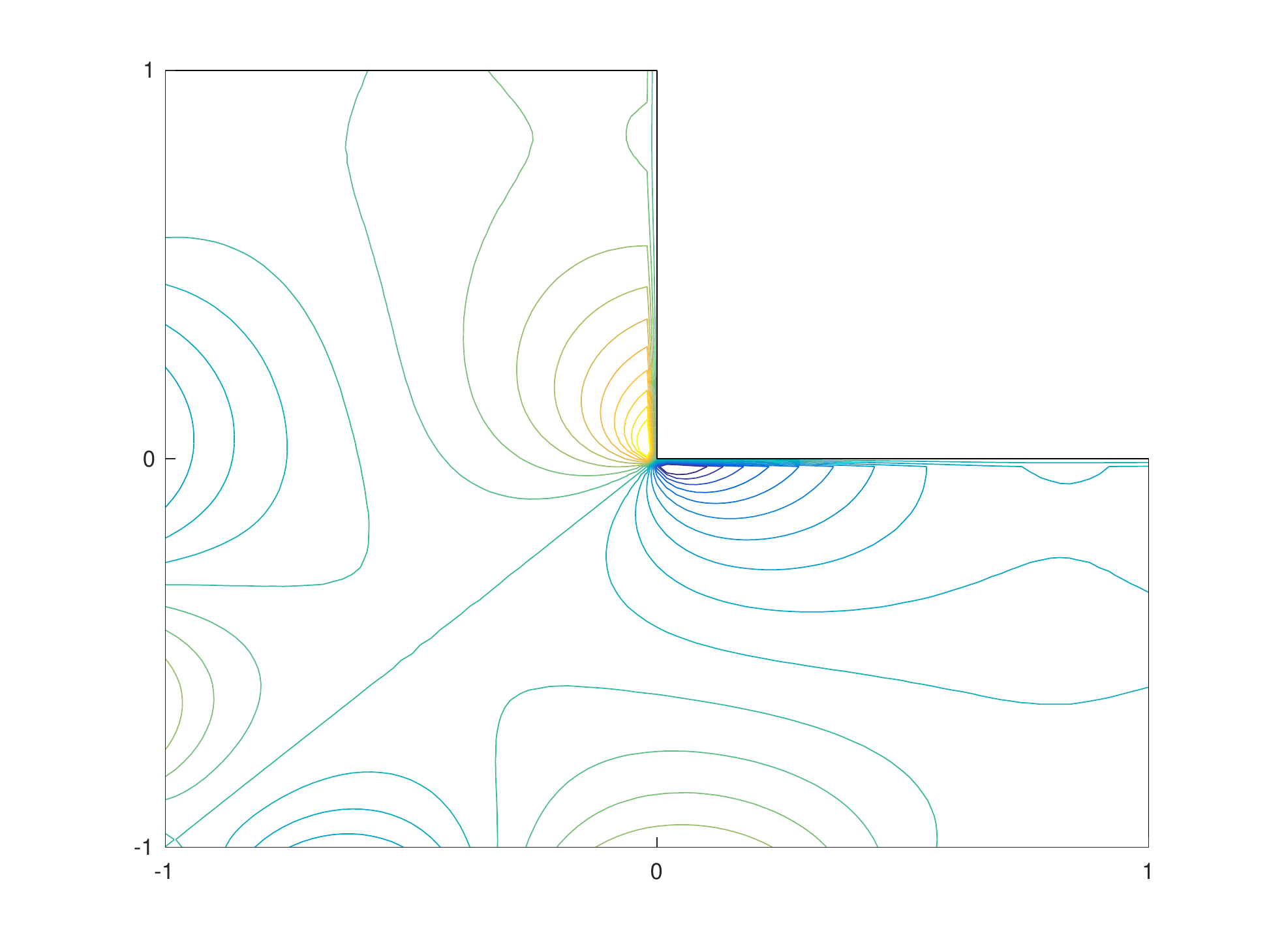}
\label{figL23c1}
}
\caption{Streamline plot of the discrete eigenfunction $\bm{u}_\ell$ \subref{figL23a1} , and plot of the of discrete pressure $p_\ell$  \subref{figL23c1}.}
\label{figexL2121}
\end{figure}
\begin{figure}[tbp]
\centering
\includegraphics[width=\textwidth]{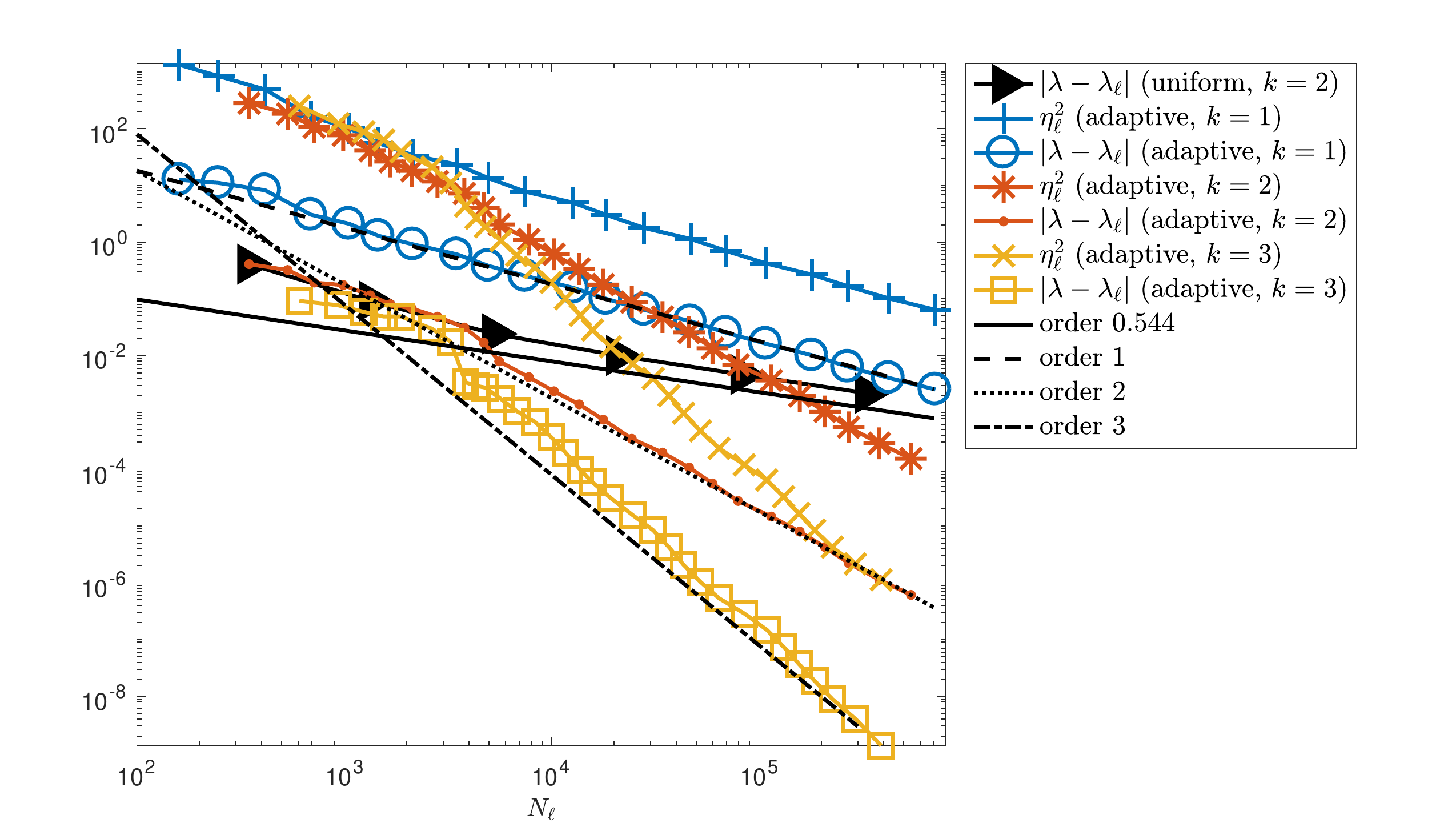}
\caption{Convergence history of $|\lambda-\lambda_\ell|$ and $\eta_\ell^2$ on uniformly and adaptively refined meshes for the L-shaped domain.}
\label{figexL21}
\end{figure}
\begin{figure}[tbp]
\centering
\subfigure[]{
\includegraphics[width=0.3\textwidth]{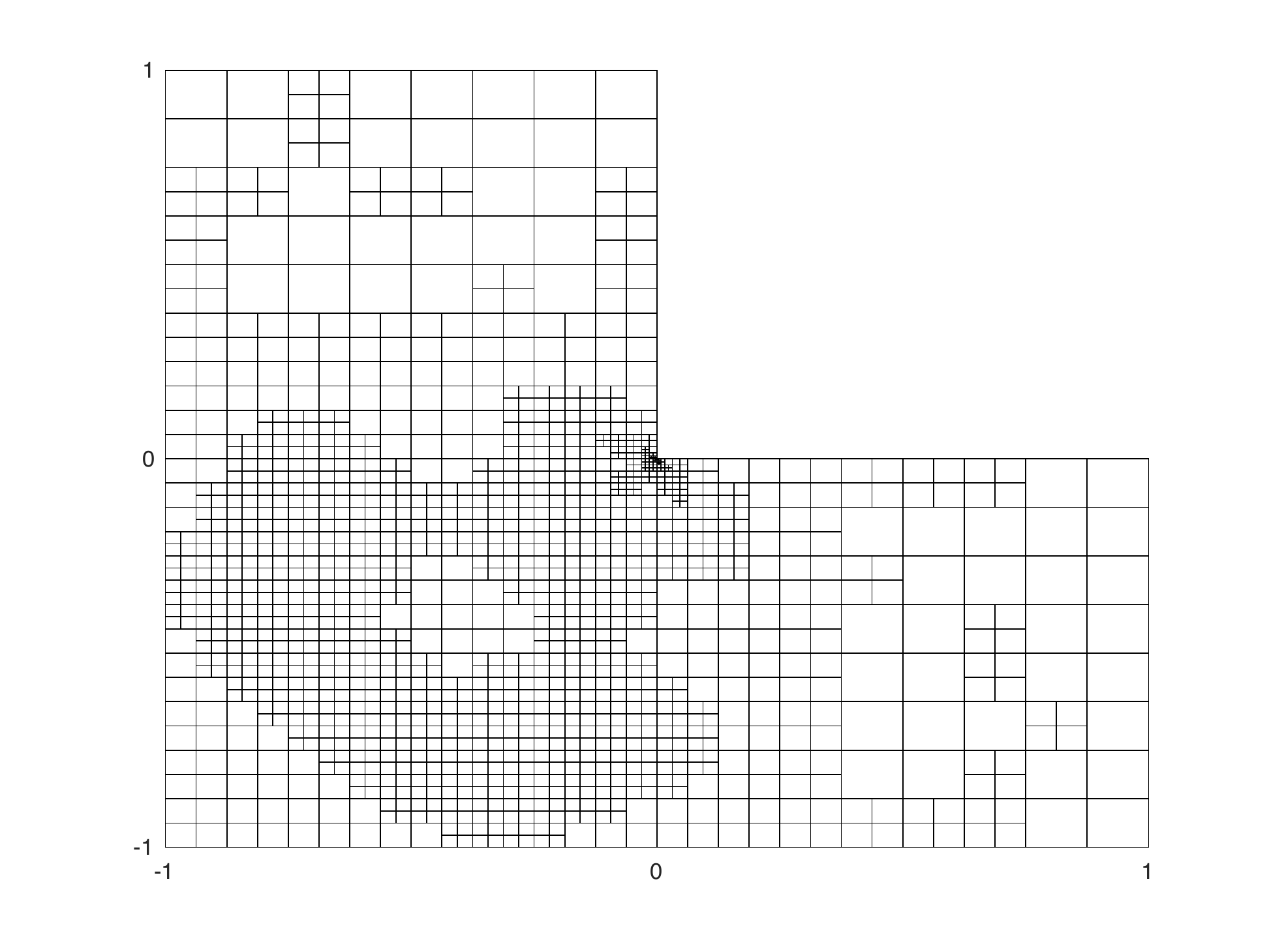}
\label{figL22a}
}
\subfigure[]{
\includegraphics[width=0.3\textwidth]{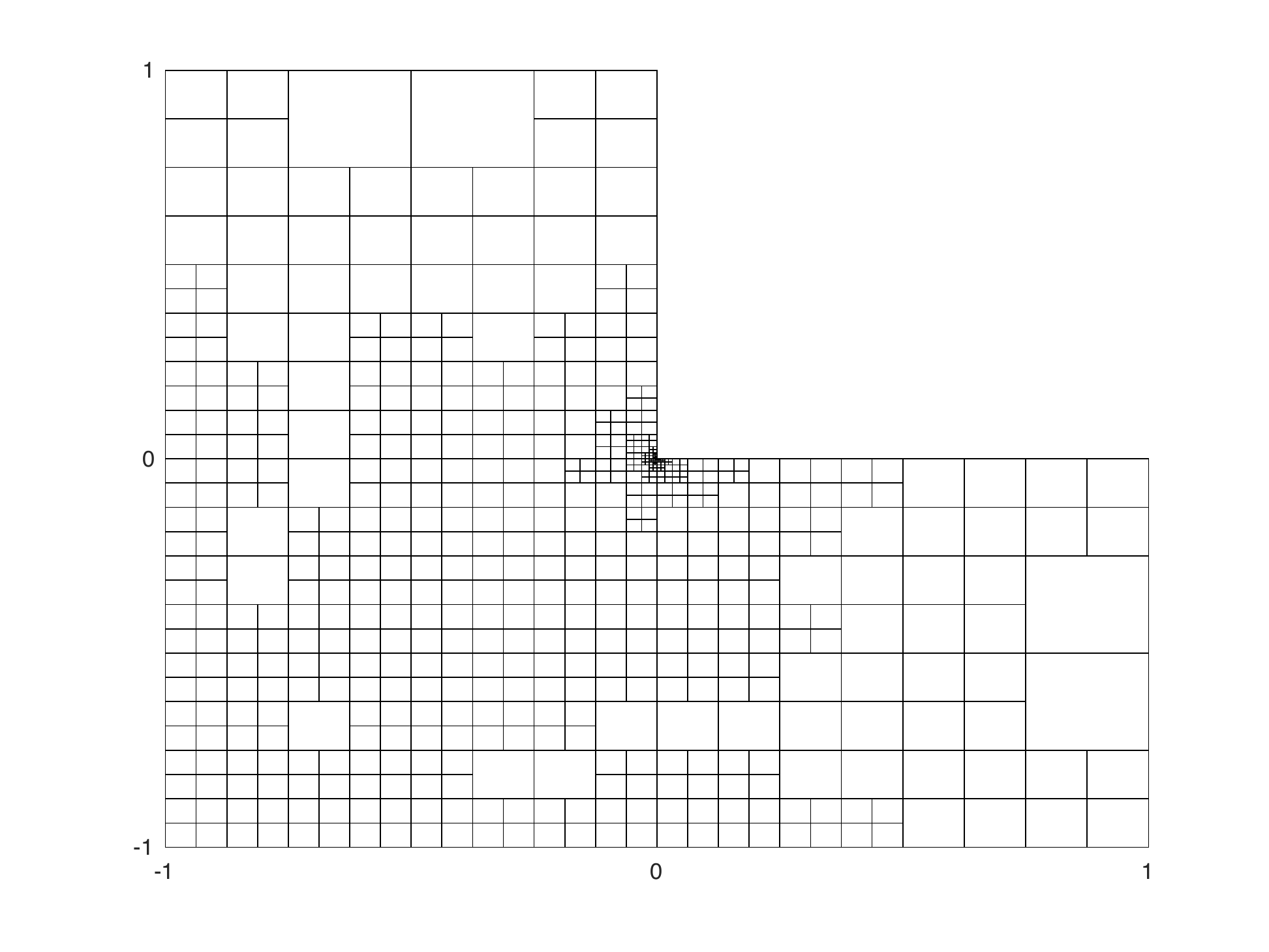}
\label{figL22c}
}
\subfigure[]{
\includegraphics[width=0.3\textwidth]{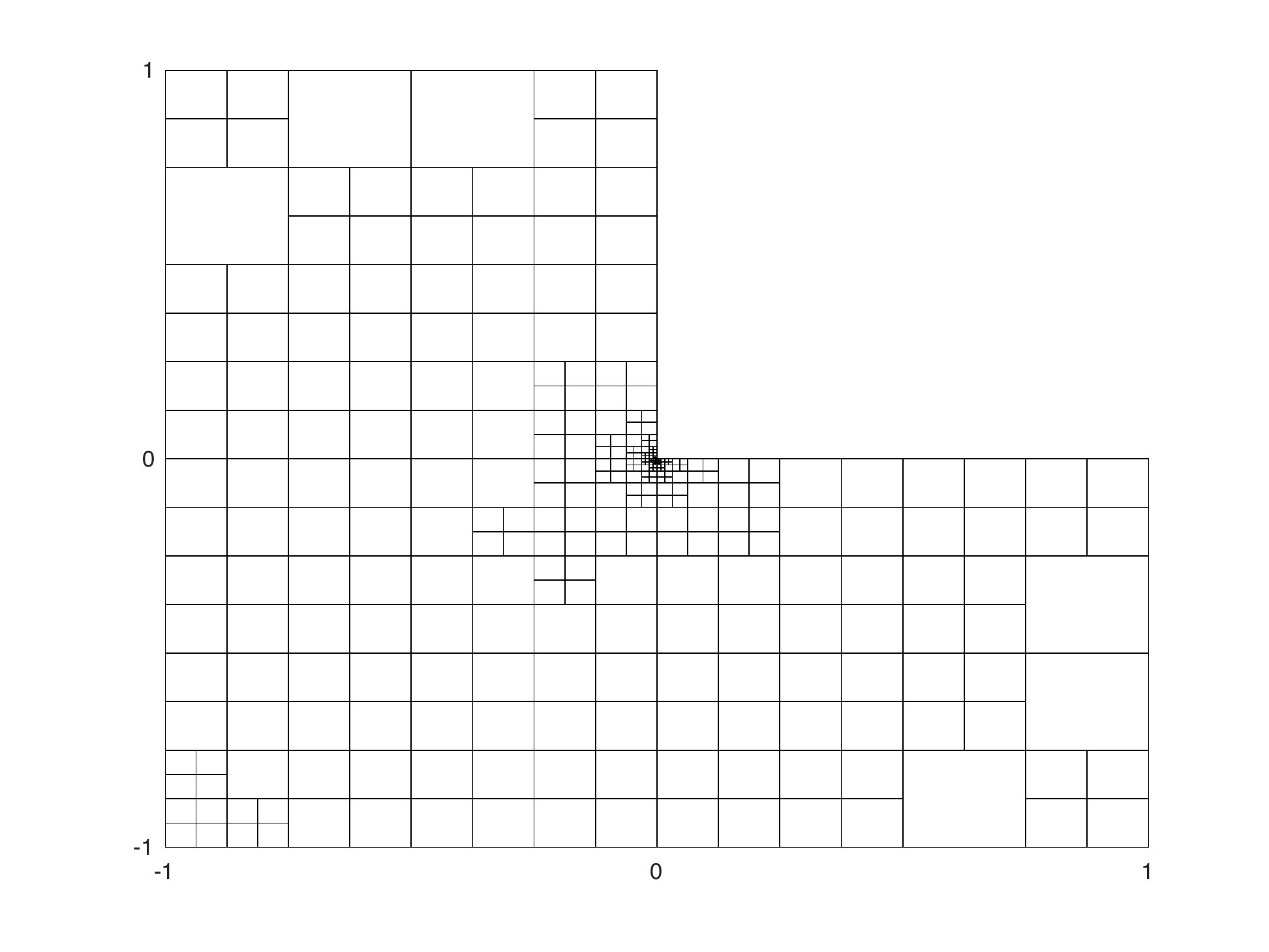}
\label{figL22c1}
}
\caption{Adaptively refined meshes for  $RT_1\times Q_1$ \subref{figL22a}, $RT_2\times Q_2$ \subref{figL22c}, and $RT_3\times Q_3$ \subref{figL22c1}.}
\label{figexL212}
\end{figure}
In the second example, we take the non-convex L-shaped domain $\Omega=(-1,1)^2\setminus (0,1)^2$
with a re-entrant corner at the origin, which allows for singular eigenfunctions.
To compute the error of the first eigenvalue,  we take $\lambda=32.13269465$ as a reference value.
Figures \ref{figL23a1} and \ref{figL23c1} show the computed velocity and discrete pressure as a streamline plot on a uniform mesh computed with $k=1$.
The exponent for the singular function at the re-entrant corner is known to be $\alpha\approx0.544483736782464$.
Hence, in Figure \ref{figexL21}  we observe suboptimal convergence of $\mathcal{O}(N_\ell^{-0.544})$ for the eigenvalue error even for $k=2$.
Adaptive mesh refinement however, achieves optimal convergence $\mathcal{O}(N_\ell^{-k})$ of the eigenvalue error for $k=1,2,3$.
The a posteriori error estimator $\eta_\ell^2$ shows to be reliable and efficient in all experiments.
Observe that the eigenvalue error obtained with $k=3$ on adaptively refined meshes is about 6 orders of magnitude smaller than
that for uniform mesh refinement. This demonstrates the importance of mesh adaptivity, in particular for high order methods.
Figures \ref{figL22a} and \ref{figL22c} show some adaptively refined meshes for $k=1,2,3$, which
show strong refinement towards the origin. 
 
\subsection{Slit domain}\label{testslitd} 
\begin{figure}[tbp]
\centering
\subfigure[]{
\includegraphics[width=0.4\textwidth]{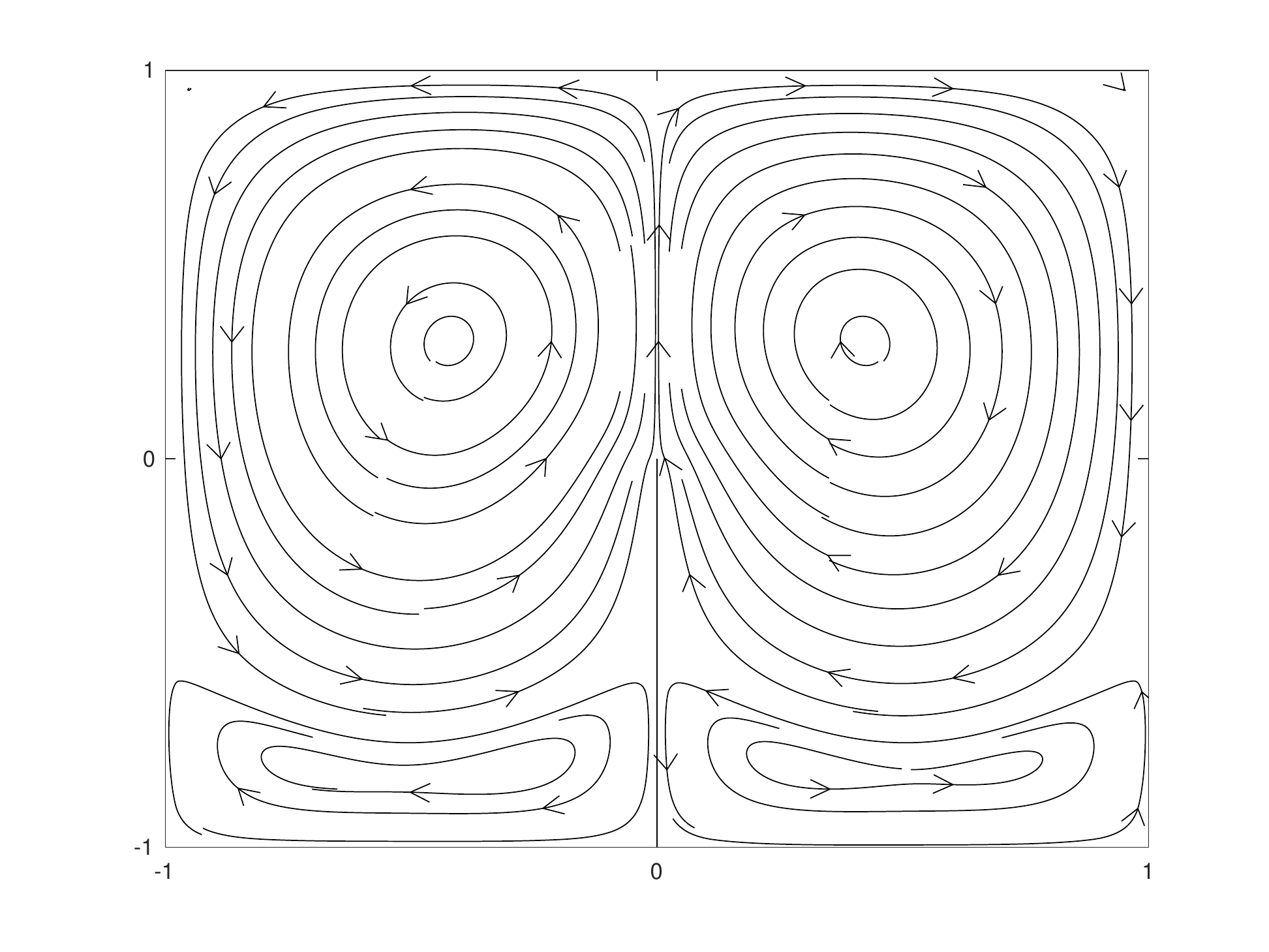}
\label{figSl23a1}
}
\hspace{0.08\textwidth}
\subfigure[]{
\includegraphics[width=0.4\textwidth]{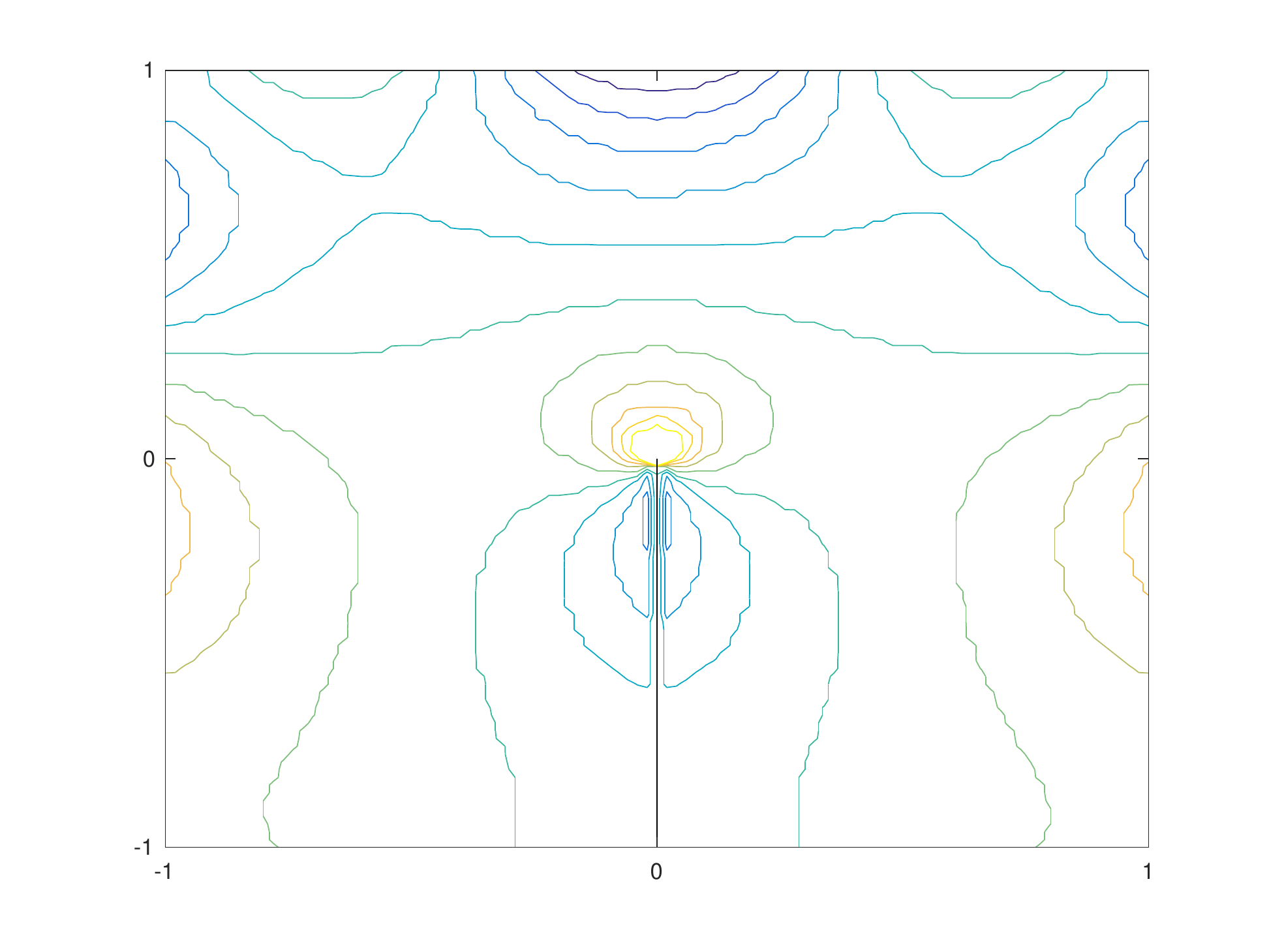}
\label{figSl23c1}
}
\caption{Streamline plot of the discrete eigenfunction $\bm{u}_\ell$ \subref{figSl23a1} , and plot of the discrete pressure $p_\ell$ \subref{figSl23c1}.}
\label{figexSl2121}
\end{figure}
\begin{figure}[tbp]
\centering
\includegraphics[width=\textwidth]{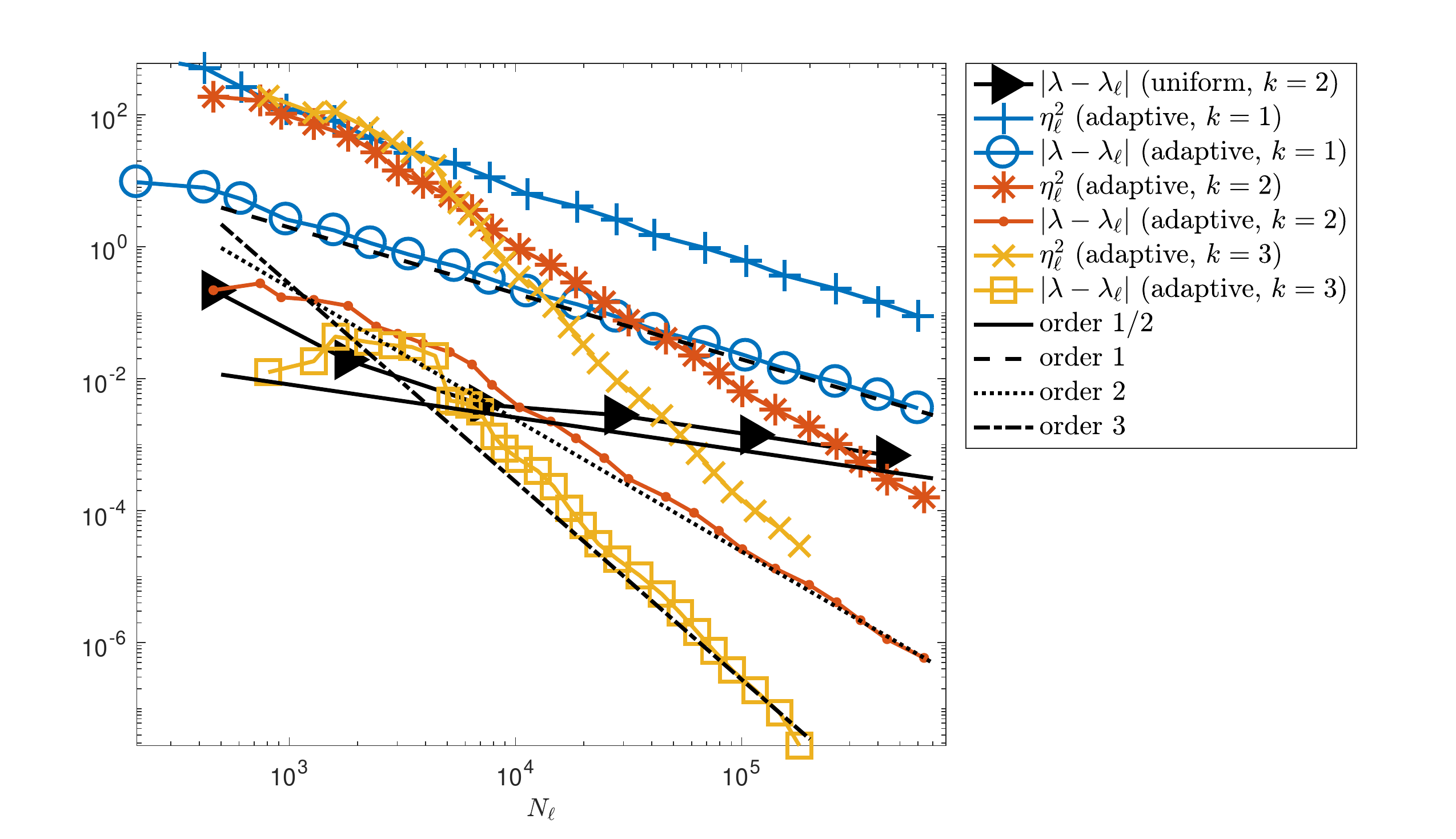}
\caption{Convergence history of $|\lambda-\lambda_\ell|$ and $\eta_\ell^2$ on uniformly and adaptively refined meshes for the slit domain.}
\label{figex2Sl1}
\end{figure}
\begin{figure}[tbp]
\centering
\subfigure[]{
\includegraphics[width=0.3\textwidth]{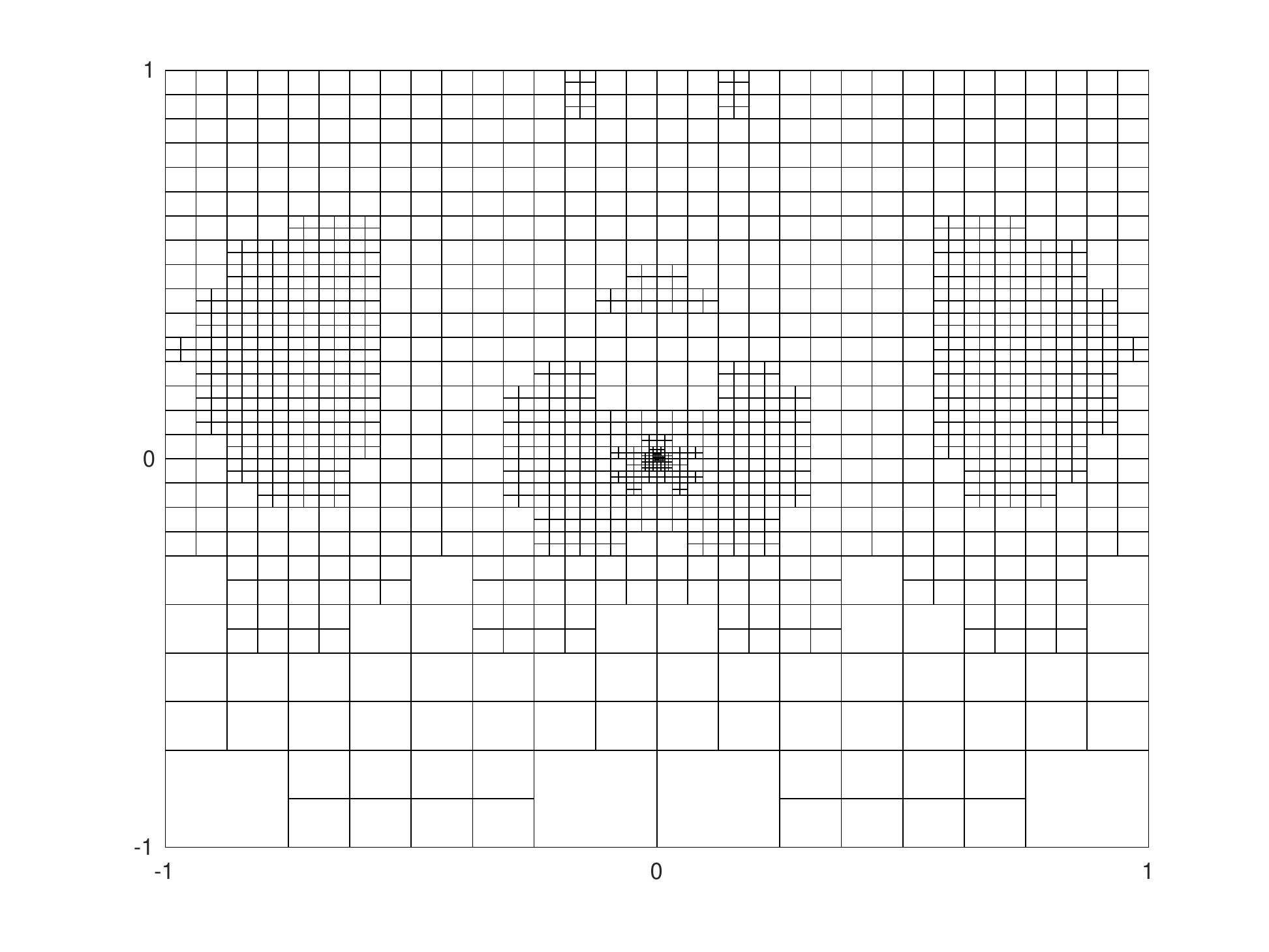}
\label{figSl22a}
}
\subfigure[]{
\includegraphics[width=0.3\textwidth]{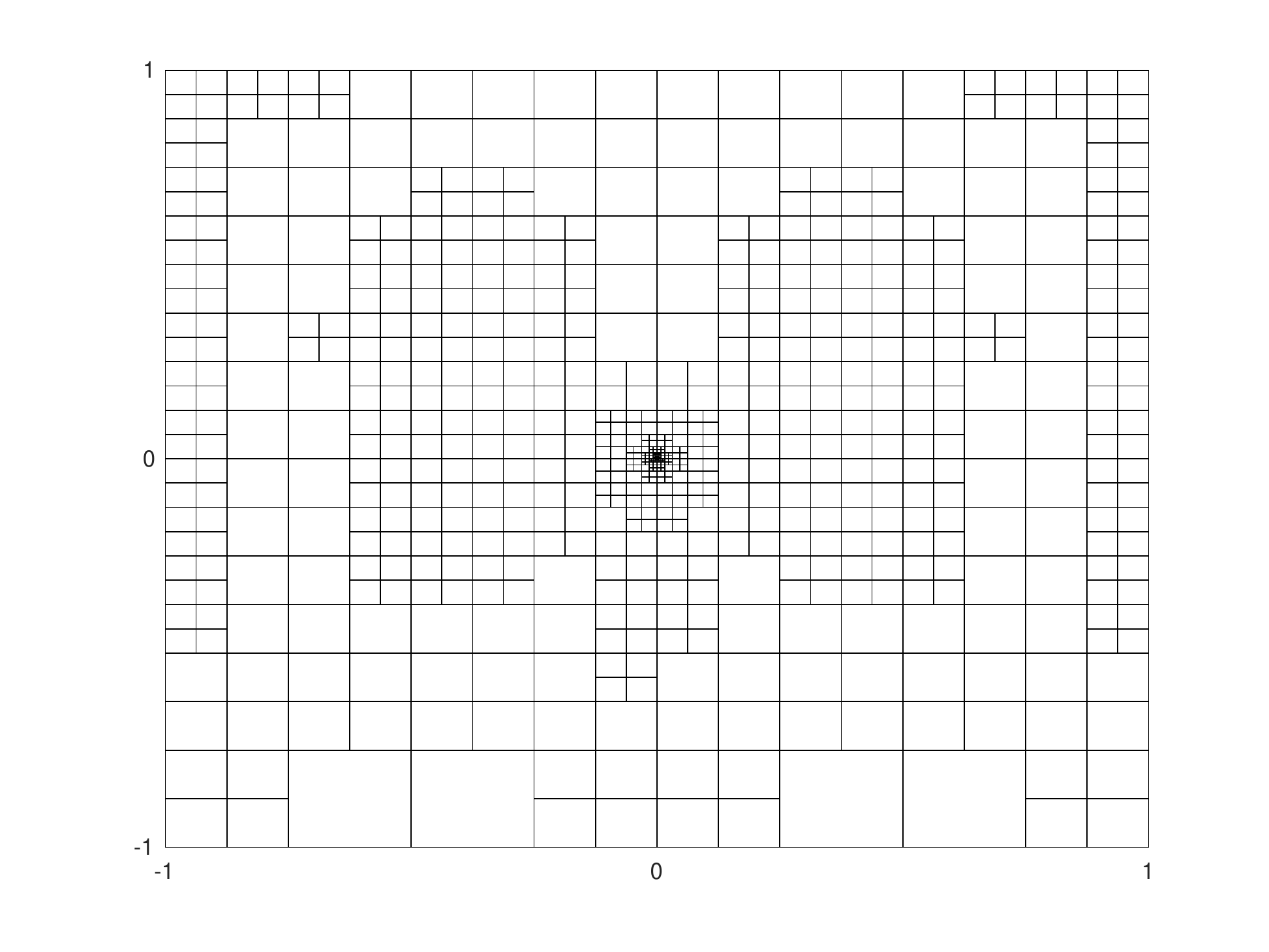}
\label{figSl22c}
}
\subfigure[]{
\includegraphics[width=0.3\textwidth]{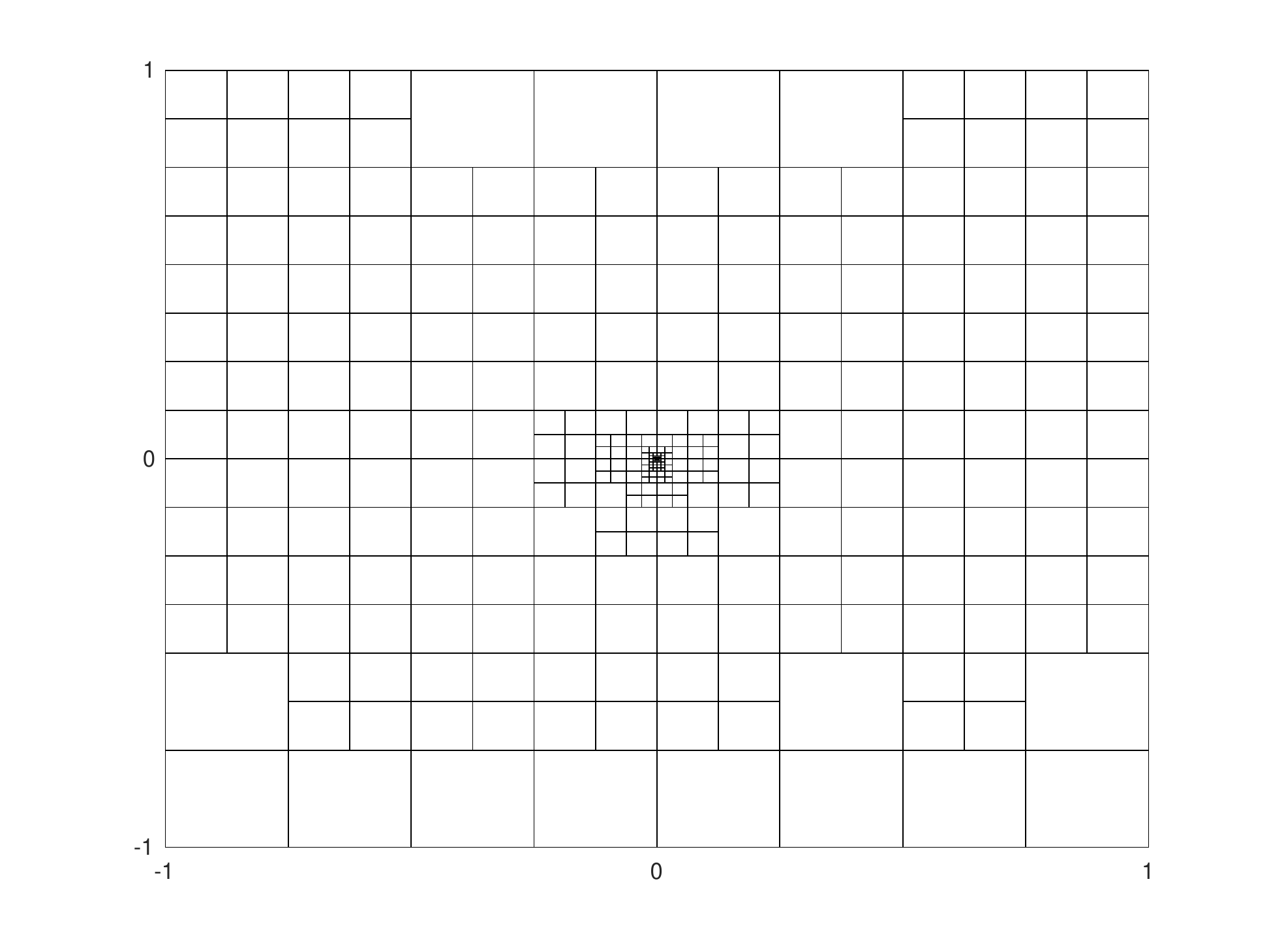}
\label{figSl22c1}
}
\caption{Adaptively refined meshes for $RT_1\times Q_1$ \subref{figSl22a}, $RT_2\times Q_2$ \subref{figSl22c}, and $RT_3\times Q_3$ \subref{figSl22c1}.}
\label{figexSl212}
\end{figure}
In the last example,  let $\Omega=(-1,1)^2\setminus ( \{0\}\times(-1,0) )$ be the slit domain.
To compute the error of the first eigenvalue,  we take $\lambda=29.9168629$ as a reference value. 
The discrete velocity eigenfunction $\bm{u}_\ell$ and discrete pressure $p_\ell$ are displayed in Figures \ref{figSl23a1} and \ref{figSl23c1} as a streamline plot on a uniform mesh for $k=1$. 
In Figure~\ref{figex2Sl1}, we observe suboptimal convergence of $\mathcal{O}(N_\ell^{-1/2})$ for the eigenvalue error on uniform meshes,
but optimal convergence of $\mathcal{O}(N_\ell^{-k})$ for $k=1,2,3$, for adaptively refined meshes.
Moreover, the a posteriori error estimator proves to be numerically reliable and efficient.
Note that we had to stop the third order method on adaptively refined meshes earlier than for the lower order methods, since the accuracy of the reference value has been already reached
with less than $2\cdot 10^5$ degrees of freedom.
Figures \ref{figSl22a} and \ref{figSl22c} show some adaptively refined meshes for $k=1,2,3$,
which are strongly refined towards the tip of the slit at the origin.


\end{document}